\documentclass{amsart}
\usepackage{amsfonts, amssymb, graphicx}
\newtheorem{thm}{Theorem}[section]
\newtheorem{cor}[thm]{Corollary}
\newtheorem{lem}[thm]{Lemma}
\newtheorem{eg}[thm]{Example}

\newtheorem{prop}[thm]{Proposition}

\theoremstyle{definition}
\newtheorem{defn}[thm]{Definition}
\theoremstyle{remark}

\newcommand{\comments}[1]{}

\mathchardef\mhyphen="2D


\newcommand{\gt}{t}
\newcommand{\lvl}{\text{lvl}}


\newcommand{\ho}{K}
\newcommand{\co}{{Ch}}
\newcommand{\gr}{{gr}}
\newcommand{\colf}{{Ch^{lf}}}
\newcommand{\cof}{{Ch^f}}
\newcommand{\dg}{{DG }}

\newcommand{\cmd}{{\text{Mod}_\infty}}

\newcommand{\nucmd}{{\text{Mod}_\infty^{nu}}}

\newcommand{\pmd}{\text{mod}}
\newcommand{\pcmd}{{\text{mod}_\infty}} 

\newcommand{\ccm}{{\text{cmod}_\infty}}

\newcommand{\susp}{s}
\newcommand{\cl}[1]{\left< {#1} \right>}

\newcommand{\tns}[1]{\otimes^{[#1]}}
\newcommand{\ts}[1]{\stackrel{\infty}{\otimes}_{#1}}
\newcommand{\inthom}{{\mathcal{H}\textit{om}}}
\newcommand{\yoneda}[2]{h_{#1} (#2)}
\newcommand{\yoned}[1]{h_{#1}}
\newcommand{\Hom}{\text{Hom}}

\newcommand{\abr}{\overline{\mathcal{B}}}
\newcommand{\acbr}{\mathcal{B}}

\newcommand{\acbn}[1]{\mathcal{B}_{#1}}

\newcommand{\br}{\overline{\mathbf{B}}}
\newcommand{\cbr}{\mathbf{B}}

\newcommand{\cbn}[1]{\mathbf{B}_{#1}}


\newcommand{\mult}{m}
\newcommand{\moduledata}{{(\mathfrak{a}, \mathfrak{b})}}

\newcommand{\labs}{{\mathfrak{s}}}
\newcommand{\glue}{{\mathfrak{t}}}
\newcommand{\gluecyc}{{\mathfrak{r}}}


\newcommand{\Unit}{\mathbf{U}}
\newcommand{\Diagonal}{\mathbf{D}}

\newcommand{\unit}{u} 

\newcommand{\ov}[1]{\overline{#1}}
\newcommand{\one}{\mathbf{1}}


\newcommand{\Z}{\mathbb{Z}}

\newcommand{\C}{\mathbb{C}}

\newcommand{\N}{\mathbb{N}}
\newcommand{\p}{\mathbb{P}}
\newcommand{\K}{\mathbb{K}}

\newcommand{\mca}{\mathcal{A}}
\newcommand{\mcb}{\mathcal{B}}
\newcommand{\mcc}{\mathcal{C}}
\newcommand{\mcd}{\mathcal{D}}

\newcommand{\mcf}{\mathcal{F}}
\newcommand{\mcg}{\mathcal{G}}

\newcommand{\mci}{\mathcal{I}}

\newcommand{\mcl}{\mathcal{L}}
\newcommand{\mco}{\mathcal{O}}

\newcommand{\mct}{\mathcal{T}}

\newcommand{\mba}{\mathbf{a}}

\usepackage{tikz, tikz-cd}
\usepackage{hyperref}

\title{Orlov spectra as a filtered cohomology theory}
\author[L. Katzarkov]{Ludmil Katzarkov}
\address{Fakult\"at f\"ur Mathematik , Universit\"at Wien, 1090 Wien, Austria}
\email{ludmil.katzarkov@univie.ac.at}


\author[G. Kerr]{Gabriel Kerr}
\address{Department of Mathematics, University of Miami, Coral Gables, FL, 33146, USA}
\email{gdkerr@math.miami.edu}

\begin{document}

\maketitle

\begin{abstract}
This paper presents a new approach to the dimension theory of triangulated categories by considering invariants that arise in the pretriangulated setting.
\end{abstract}

\section{Introduction}

In \cite{Rouquier}, Rouquier gave several results on the dimension theory of triangulated categories. Following this paper, Orlov computed the dimension of the derived category of coherent sheaves on an elliptic curve and found it to equal one in \cite{Orlov}. Orlov then advanced a more general perspective on dimension theory by defining the spectrum of a triangulated category, now called the Orlov spectrum, which includes the generation times of all strong generators. This important concept serves as a more nuanced invariant than dimension, as it gives one a way to compare all strong generators. Many results on Orlov spectra are obtained in \cite{BFK}, where they observe, in certain instances, gaps in the generation times. On the other hand, there are many cases where no such gaps appear. This lead to several outstanding conjectures on the existence of such gaps and their potential relationship to the Hodge conjecture and homological mirror symmetry.

While the triangulated setting serves as an accessible model for homological invariants, it is generally accepted that triangulated categories are inadequate for giving a natural characterization of homotopy theory for derived categories. Instead of working in this setting, it is advisable to lift to a pretriangulated category, or $(\infty, 1)$-category framework, where several constructions are more natural \cite{Lurie, Drinfeld}. In this paper, we study the dimension of triangulated categories by lifting to pretriangulated \dg or $A_\infty$-categories. 

When the category $\mct$ is strongly generated by a compact object $G$, we upgrade several classical results in dimension theory of abelian categories to the pretriangulated setting and find that the natural filtration given by the bar construction plays a determining role in the calculus of dimension. Indeed, if $G$ is such a generator, using a result of Lef\`evre-Hasegawa, we can regard $\mct$ as the homotopy category of perfect modules over an $A_\infty$ algebra $A_G = \Hom^* (G,G)$. In addition to being a \dg category, the category of perfect $A_\infty$ modules over $A_G$ is enhanced over filtered chain complexes, where the filtration is obtained through the bar construction. This filtration descends to the triangulated level. The first main result, Theorem \ref{thm:main1}, in this paper is that the generation time of a strong generator $G$ equals the maximal length of this filtration.

\begin{thm} The generation time of $G \in \mct$ equals the supremum over all $M, N \in A_G \mhyphen \pcmd$ of the lengths of $\Hom_{A_G \mhyphen \pcmd} (M, N)$ with respect to the filtration induced by the bar construction. 
\end{thm}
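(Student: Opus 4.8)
The plan is to move the whole question to the side of $A_\infty$-modules and to recognise the bar filtration on $\Hom$-complexes as a \emph{ghost filtration}; the two inequalities will then be exactly the ghost lemma and its converse. First I would use the theorem of Lef\`evre--Hasegawa quoted above to replace $\mct$ by the homotopy category of the pretriangulated category $A_G\mhyphen\pcmd$ of perfect $A_\infty$-modules, under which $G$ becomes the free rank-one module $A_G$ and the generation time is unchanged; all modules are henceforth taken in their minimal models, so that the complexes $\Hom_{A_G\mhyphen\pcmd}(M,N)\simeq\Hom_{A_G}(\mathrm{Bar}(A_G,A_G,M),N)$ and their word-length filtration $F^{\bullet}$ coming from the bar construction are the canonical ones. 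Here $F^{\ge p}\Hom(M,N)$ denotes the subcomplex of morphisms with vanishing components of word length $<p$, and ``length of $\Hom(M,N)$'' means the largest $p$ with $F^{\ge p}\Hom(M,N)$ of nonzero image in cohomology; the number to be computed is the supremum of this over perfect $M,N$, which I call $d$.

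Before the main step I would record two soft facts, valid over the ground field: (a) a closed degree-zero morphism $f$ is a $G$-ghost, i.e.\ $\Hom^{\ast}(G,f)=0$, if and only if it is cohomologous to a morphism whose strict component $f_0$ vanishes, i.e.\ if and only if $f\in F^{\ge 1}$; and (b) composition of $A_\infty$-module morphisms is filtration-additive, $F^{\ge p}\circ F^{\ge q}\subseteq F^{\ge p+q}$, which is immediate from the explicit composition formula $(g\bullet f)_n=\sum_{i+j=n}g_i(-\,;f_j(-\,;-))$. The central step --- the \emph{key lemma} --- is that the bar filtration on $\Hom$-complexes is computed by the cofibre tower of the bar resolution. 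Concretely: filter $\mathrm{Bar}(A_G,A_G,M)\xrightarrow{\ \sim\ }M$ by word length; the pieces $\mathrm{Bar}_{\le q}(M)$ are subcomplexes (the bar differential strictly lowers word length), with free subquotients $\mathrm{Bar}_q(M)=M\otimes\overline{A_G}^{\otimes q}\otimes A_G[q]$, and $\mathrm{Bar}=\operatorname{colim}_q\mathrm{Bar}_{\le q}$. Set $M_p:=\mathrm{cofib}\bigl(\mathrm{Bar}_{\le p-1}(M)\to M\bigr)\simeq\mathrm{Bar}/\mathrm{Bar}_{\le p-1}$. Then: the octahedral axiom applied to $\mathrm{Bar}_{\le p-1}(M)\hookrightarrow\mathrm{Bar}_{\le p}(M)\to M$ shows the tower $M=M_0\to M_1\to M_2\to\cdots$ has cofibres that are shifts of free modules (the cofibre of $M_p\to M_{p+1}$ is $\Sigma\,\mathrm{Bar}_p(M)$); the computation with the counit $A_G\otimes(-)\to(-)$ shows each $M_{p-1}\to M_p$ is a $G$-ghost (in the chain model $M_{p-1}\simeq\mathrm{Bar}_{\ge p-1}(M)$ the free bottom piece surjects onto the cohomology), so $M\to M_p$ is a $p$-fold composite of $G$-ghosts with $\mathrm{fib}(M\to M_p)=\mathrm{Bar}_{\le p-1}(M)$ in $\langle G\rangle_p$ (allowing arbitrary coproducts of $G$); and applying $\Hom_{A_G}(-,N)$ to the quotient $\mathrm{Bar}\twoheadrightarrow\mathrm{Bar}/\mathrm{Bar}_{\le p-1}$ and inspecting the word-length decomposition gives an isomorphism $\Hom_{A_G\mhyphen\pcmd}(M_p,N)\cong F^{\ge p}\Hom_{A_G\mhyphen\pcmd}(M,N)$ under which the induced map $H^{\ast}\Hom(M_p,N)\to H^{\ast}\Hom(M,N)$ is precomposition with $M\to M_p$. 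In particular $\mathrm{len}\,\Hom(M,N)$ is the largest $p$ for which some $\psi\colon M_p\to N$ has $\psi\circ(M\to M_p)\ne 0$ in cohomology.

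Granting the key lemma the theorem follows. If $M,N$ are perfect with $\mathrm{len}\,\Hom(M,N)=\ell$, the key lemma gives $\psi\colon M_\ell\to N$ with $\psi\circ\varepsilon\ne 0$, where $\varepsilon\colon M\to M_\ell$ is an $\ell$-fold composite of $G$-ghosts; then $\varepsilon\ne 0$, so by the ghost lemma $M\notin\langle G\rangle_\ell$ and hence $\gt(G)\ge\ell$; taking the supremum, $\gt(G)\ge d$. Conversely assume $d$ finite, let $M$ be perfect, and let $\gamma\colon M\to N$ be any composite of $d+1$ $G$-ghosts with $N$ perfect; by (a) and (b) above $\gamma\in F^{\ge d+1}\Hom(M,N)$, which has zero image in cohomology because $\mathrm{len}\,\Hom(M,N)\le d$, so $\gamma=0$ in $\mct$. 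Thus every $(d+1)$-fold ghost composite out of $M$ into a perfect module vanishes, whence by the converse of the ghost lemma $M\in\langle G\rangle_{d+1}$; since $M$ was arbitrary, $\langle G\rangle_{d+1}=\mct$ and $\gt(G)\le d$. Combining gives $\gt(G)=d$, with both sides equal to $+\infty$ when $G$ is not a strong generator.

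The main obstacle is the key lemma itself: identifying the purely combinatorial word-length filtration of the bar resolution with the cofibre tower, and verifying that the tower maps are ghosts. This requires carefully unwinding the bar complex --- recognising $\mathrm{Bar}_{\le p-1}(M)$ as an iterated extension of (arbitrary coproducts of) free modules, matching $\mathrm{cofib}(\mathrm{Bar}_{\le p-1}(M)\to M)$ with the quotient complex $\mathrm{Bar}/\mathrm{Bar}_{\le p-1}$, and pinning down the connecting maps via the counit so that the surjectivity that makes each $M_{p-1}\to M_p$ a ghost is visible. Two subsidiary points also need attention: passing to minimal models, so that the filtration --- and with it the notion of ``length'' --- is well defined and the equivalence ``ghost $\Leftrightarrow F^{\ge 1}$'' is clean; and a compactness argument for the perfect module $M$ to replace the arbitrary coproducts appearing in $\mathrm{Bar}_{\le p-1}(M)$ by finite ones, so that ``$\langle G\rangle_{d+1}$'' in the conclusion is the genuine finite-coproduct subcategory. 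I would take the ghost lemma and its converse as known in $\mct=\langle G\rangle_{\infty}$, the converse being available here precisely because $\mct$ carries the explicit cellular (bar) tower constructed above.
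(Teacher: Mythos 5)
Your proposal is correct in substance, but the central mechanism is organized differently from the paper's. The paper proves the ideal-level equality $\mcg_G^r = F^r$ (Theorem \ref{thm:main1}): after establishing $\mcg_G = F^1$ via the unit-insertion contracting homotopy $h_A$ and $\mcg_G^r \subseteq F^r$ by filtration-additivity of composition (your facts (a) and (b)), it shows $F^r \subseteq \mcg_G^r$ by induction on $r$, factoring a class $\phi \in F^r \Hom (M,N)$ up to homotopy as $\psi \circ \pi$ with $\pi : M \to \Diagonal_A \odot_1 M$ in $F^1$ and $\psi \in F^{r-1}$, the lowering of the filtration of $\psi$ being done with the diagonal-bimodule homotopies $h^{\pm}_{diag}$ and the translation maps $\tau_{\pm}$; it then invokes the Ghost Lemma of \cite{BFK} exactly as you do. You instead run the whole word-length tower at once: your $M_p$ is essentially the paper's filtered quotient $\Diagonal_A \odot_p M$, and your key lemma --- that $F^{\geq p}\Hom (M,N)$ is identified with $\Hom (M_p , N)$ compatibly with precomposition by the $p$-fold ghost composite $M \to M_p$, whose fibre $\mathrm{Bar}_{\leq p-1}(M)$ lies in $\cl{G^{\diamond (p-1)}}$ --- replaces the induction and the $\tau_{\pm}$-bookkeeping by a single cellular-tower identification, from which both inequalities drop out via the Ghost Lemma and its converse. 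What the paper's route buys is the stronger intermediate statement $\mcg_G^r = F^r$ as an equality of ideals; what yours buys is a shorter, more standard Postnikov-style argument. The points you flag are indeed the ones to nail down: the identification of $\Hom (M_p , N)$ with $F^{\geq p}\Hom (M,N)$ is an honest isomorphism only for the strict (comodule) morphism complex and a quasi-isomorphism for the full $A_\infty$ one; the ghost property of $M_{p-1} \to M_p$ is exactly the paper's $h_A$-mechanism (the contraction raises word length by one, hence yields a nullhomotopy one stage up the tower); and compactness of $A_G$ is needed not only to replace the coproducts in the tower fibres by finite ones but already to invoke both directions of the Ghost Lemma, so it should be carried as a standing hypothesis, as the paper does.
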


As a result, we develop a filtered cohomology theory which yields the generation times that occur in Orlov spectra.  The lengths referred to in this theorem are those of the filtrations induced on the cohomology of the complexes, or the $\text{Ext}$ groups, by the pretriangulated filtrations. In practice, it is possible to compute these lengths by calculating their spectral sequences which will converge under very mild assumptions.

Another filtration that occurs naturally from the bar construction is on the tensor product. This filtration is especially useful as one may define change of base as a tensor product with an appropriate bimodule. After establishing basic adjunction results in the next section, we generalize the classical change of base formula for dimension to the $A_\infty$ algebra setting in Theorem \ref{thm:main2}. A new multiplicative constant appears in this version which is related to the speed at which a spectral sequence associated to the tensor product filtration converges. 

\begin{thm} Let $P$ be a $(B, A)$-bimodule and $M$ a left $A$-module. Suppose the
spectral sequence of $P \ts{A} M$ degenerates at the $(s + 1)$-st
page. If the convolution functor $P \ts{} \_$ is faithful, then
\begin{equation*} \textnormal{lvl}_A (M) \leq \textnormal{lvl}_A (P) + s \cdot \textnormal{lvl}_B (P \ts{A} M ) \end{equation*} 
\end{thm}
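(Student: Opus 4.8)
The plan is to combine the dictionary of Theorem~\ref{thm:main1} (level $=$ length of the bar filtration on $\Hom$--complexes) with a standard adjunction argument, and then to let the spectral sequence hypothesis govern the bookkeeping. First I would set $\Phi = P \ts{} \_$ and recall that, since $P$ is perfect over $B$, the convolution functor has a right adjoint $\Phi^{R} = \inthom_{B}(P, \_)$, computed on perfect modules by tensoring with the dual bimodule $Q := \inthom_{B}(P,B)$, an $(A,B)$--bimodule; its unit is a morphism $\eta_{M}\colon M \to Q \ts{B}(P \ts{A} M)$ of left $A$--modules, natural in $M$. The faithfulness of $\Phi$ makes $\Hom_{A\mhyphen\pcmd}(\_,\eta_{M})$ injective, and a standard triangle argument (the shifted cone of $\eta_{M}$ maps to $M$ by a morphism that dies after $\eta_{M}$, hence vanishes, so the defining triangle splits) shows $\eta_{M}$ is a split monomorphism in the homotopy category. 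Thus $M$ is a homotopy retract of $Q \ts{B}(P \ts{A} M)$, and since $\lvl_{A}$ is monotone under retracts it suffices to bound $\lvl_{A}\bigl(Q \ts{B}(P\ts{A}M)\bigr)$.

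Next I would record two facts about $Q$. The duality $\inthom_{B}(\_,B)$ is a filtered anti--equivalence on perfect modules that is the identity on the $A$--side bar filtration, so $\lvl_{A}(Q) = \lvl_{A}(P)$; and for any left $B$--module $L$ the bar/tensor filtration on $Q \ts{B} L$ has associated graded pieces $Q \otimes \overline{B}[1]^{\otimes p}\otimes L$, each of which is, as a left $A$--module, a direct sum of shifts of $Q$ and hence of level $\le \lvl_{A}(P)$. Consequently the entire problem reduces to counting how many extension steps are needed to reassemble $P\ts{A}M$, as a left $B$--module, from building blocks whose contribution on the $A$--side is already under control; and this count is precisely what the degeneration hypothesis pins down.

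The heart of the argument is then to use the tensor product filtration on $P \ts{A} M$ — the one whose spectral sequence degenerates at the $(s{+}1)$--st page — to present $P \ts{A} M$, as a left $B$--module and up to homotopy retract, as an $s$--fold iterated cone (a twisted complex / Postnikov system) on the page--one columns: degeneration at page $s+1$ says that the only surviving attaching differentials are $d_{1},\dots,d_{s}$, so only $s$ layers of cones are required, and each layer can be arranged to consist of modules built from $P\ts{A}M$ itself (with shifts of $A$ in the coefficient slot), so that absorbing one layer costs at most $\lvl_{B}(P\ts{A}M)$ generator--steps over $B$, for a total of $s\cdot\lvl_{B}(P\ts{A}M)$. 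Applying $Q\ts{B}\_$ and invoking the two facts above — each graded column contributes the additive constant $\lvl_{A}(P)$, the $s$ gluing layers contribute $s\cdot\lvl_{B}(P\ts{A}M)$ — and finally using subadditivity of $\lvl_{A}$ under cones, one obtains the stated inequality.

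I expect the main obstacle, exactly as in the classical change--of--base results, to be this last piece of bookkeeping: converting ``the spectral sequence of the tensor product filtration degenerates at page $s+1$'' into an honest finite Postnikov presentation of $P\ts{A}M$ with exactly $s$ non--trivial attaching layers, while keeping the $B$--side count (where $\lvl_{B}(P\ts{A}M)$ lives) cleanly separate from the $A$--side count (where $\lvl_{A}(P)$ and the final answer live), and checking that the two filtrations in play — the $B$--bar filtration defining $\Phi^{R}=Q\ts{B}\_$ and the $A$--tensor filtration on $P\ts{A}M$ — interact compatibly enough that the resulting bifiltered spectral sequence behaves as claimed. A secondary point is that $\lvl_{A}(M)$ is a supremum over test modules, so the retract argument must be run functorially (using naturality of $\eta$) to get a uniform bound.
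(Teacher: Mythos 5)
Your opening reduction is plausible: the $\otimes$--$\inthom$ adjunction of Section 2.5 does give a unit $\eta_M$, faithfulness of $P\ts{}\_$ makes postcomposition with $\eta_M$ injective on Hom's, and the triangle argument then splits $\eta_M$, so $M$ is a homotopy retract of $\inthom_{B}(P,P\ts{A}M)$ (although writing the right adjoint as $Q\ts{B}\_$ with $Q=\inthom_B(P,B)$, and the claim $\lvl_A(Q)=\lvl_A(P)$, already require perfectness/compactness hypotheses over $B$ that the theorem does not assume, and the graded pieces you describe are infinite sums, which the closure operation $\cl{\cdot}$ does not automatically absorb). The genuine gap is the step converting ``the spectral sequence of $P\ts{A}M$ degenerates at page $s+1$'' into ``$P\ts{A}M$ is an $s$-fold iterated cone on the page-one columns.'' Degeneration at page $s+1$ says $d_k=0$ for $k>s$; it does not compress the Postnikov system attached to the tensor filtration to $s$ layers. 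The number of extension layers is governed by the length of the filtration itself (the bar filtration $P\tns{t}M$ has unboundedly many steps), and the number of nonvanishing differentials $d_1,\dots,d_s$ does not bound it. Consequently the final bookkeeping --- ``each column contributes $\lvl_A(P)$, the $s$ layers contribute $s\cdot\lvl_B(P\ts{A}M)$'' --- is unsupported: subadditivity of level under cones would at best give a bound growing with the number of filtration steps, not the stated $\lvl_A(P)+s\cdot\lvl_B(P\ts{A}M)$.

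What degeneration actually buys, and what the paper exploits, is different: if a filtered complex has length $r$ and its spectral sequence degenerates at page $s+1$, then the induced filtration on the quotient by the $r$-th filtered piece has length at most $s$. Iterating this is the content of Lemma \ref{lem:lift}, which shows that the projections $\pi_t:P\ts{}M\to P\odot_t M$ lift, up to homotopy, into $F^{p+1}\Hom$ once $t>\lvl_A(P)+sp$, so that with $p=\lvl_B(P\ts{A}M)$ one gets $\pi_t\simeq 0$ beyond the bound. The paper then runs a ghost-map contradiction: by Theorem \ref{thm:main1}, a violation of the inequality yields a nonzero class $f\in F^r\Hom(M,N)$ with $r$ beyond the bound; $1_P\ts{}f$ factors through $\pi_r$, hence is null-homotopic, contradicting faithfulness of $P\ts{}\_$. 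If you wish to salvage your retract strategy, you would still need an analogue of Lemma \ref{lem:lift} to mediate between the $A$-side tensor filtration (where $s$ and $\lvl_A(P)$ live) and the $B$-side length $\lvl_B(P\ts{A}M)$; that interaction is exactly the missing ingredient, and it is where the multiplicative factor $s$ genuinely enters.
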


Here $\lvl_A (M)$ plays the role of homological, or projective, dimension of a module $M$. If the algebra $A$ is formal, the constant $s$ is $1$ and we see the classical formula. If higher products are relevant, one must modify the classical inequality. \\

{\em Acknowledgements:} Support was provided by NSF Grant DMS0600800, NSF FRG Grant DMS-0652633, FWF
Grant P20778, and an ERC Grant --- GEMIS. Both authors would like to thank Matt Ballard, Colin Diemer, David Favero, Maxim Kontsevich, Dima Orlov, Pranav Pandit, Tony Pantev and Paul Seidel for helpful comments and conversation during the preparation of this work. The second author would like to thank David Favero in particular for his patient explanations of results on the dimension theory of triangulated categories. 

\section{$A_\infty$ Constructions}

This section will review many definitions and constructions related to $A_\infty$ algebras and modules. The aim of our treatment is to approach this subject with a special emphasis on the filtrations arising from the bar constructions. These filtrations are the main technical structure we use in the dimension theory for pretriangulated categories.

After reviewing some standard definitions, we will give the definitions of filtered tensor product, filtered internal Hom and duals in the category of $A_\infty$-bimodules. The mantra that all constructions in the $A_\infty$ setting are derived constructions will be continually reinforced. Moreover, the above functors will land in the category of lattice filtered $A_\infty$-modules, which preserves the relevant data for a study of dimension. The $\otimes-\inthom$ adjunction, usually written in either the abelian or derived setting, will be formed as an adjunction between filtered \dg functors. The categorical formulation of this statement is that the category $Alg_\infty$ is a biclosed bicategory enriched over filtered cochain complexes. We will utilize this to update classical results on the relationship between flat and projective dimensions for perfect modules. 

\subsection{Fundamental Notions} \label{subsub:lf}

We take a moment to lay out some basic notation
and fix our sign conventions. All algebras and vector spaces will be over a fixed field $\K$ and categories will be $\K$-linear categories. Let $\gr$
be the category of graded vector spaces over $\K$ and finite sums of homogeneous maps. We
take $\co$ to be the category of
cochain complexes of vector spaces over $\K$ and finite sums of homogeneous maps.
We will identify $\Hom_\co$ with the internal $\Hom$ whose differential of
\begin{equation*} f \in \Hom_\co^k ((C, d_C), (C^\prime, d_{C^\prime} )) \end{equation*}
is the usual one, namely, \begin{equation*} df := f \circ d_C -
(-1)^k d_{C^\prime} \circ f \end{equation*} Finally, we take $\ho$
to be the category of chain complexes and cochain maps. In other
words, maps which are cocycles relative to $d$ in $\co$.

In the above definition, we are purposefully vague with respect to
the abelian group indexing the grading. For most of the paper, we
will assume our chain complexes are $\Z$-graded, but there will be
examples of the $(\Z / 2 \Z)$-graded case. This should cause no
difficulty as the proofs will be independent of this choice.

We view $\co$ as a closed category with respect to the
tensor product along with the Koszul sign rule $\gamma_{V, W} : V
\otimes W \to W \otimes V$ given by:

\begin{equation} \label{eq:symmon} \gamma_{V, W} (v \otimes w) = (-1)^{|v| |w| } w \otimes v \end{equation}

We will need to implement this sign convention when discussing
tensor products of maps as well. For this we follow the usual convention. Namely, given  homogeneous maps $f \in \Hom_\gr^* (V_1 , V_2)$   $g \in \Hom_\gr^* (W_1, W_2)$ then we define $f \otimes g \in \Hom_\gr (V_1 \otimes W_1 , V_2 \otimes W_2 )$ via $(f \otimes g) (v \otimes w) = (-1)^{|g||v|} f (v) \otimes g(w)$. 


By a differential graded, or \dg , category $\mcd$ we mean a category enriched in $\co$. We let 
\begin{equation*} \yoned{\_} : \mcd \to \co^{\mcd^{op}} \end{equation*}
be the Yoneda functor given by $\yoneda{E}{E^\prime} = \Hom_\mcd (E^\prime , E )$. 

In categories $\gr$, $\co$ and $\ho$, we have the shift functor $\susp$ which sends $V^*$ to $V^{* + 1}$. On morphisms we have $\susp (f) = (-1)^{|f|} f$.  There is also a (degree $1$) natural transformation $\sigma : I \to \susp$ defined as $\sigma (v) = (-1)^{|v|} v$. One can utilize $\sigma$ to translate the signs occurring in various bar constructions given
in this text and those in the ordinary desuspended case. In particular, given a map $f : V^{\otimes n} \to W^{\otimes m}$ in $\co$ we define $\susp_\otimes (f) : (\susp V)^{\otimes n} \to (\susp W)^{\otimes m}$ to be $ \sigma^{\otimes m} \circ f \circ (\sigma^{-1})^{\otimes n}$. We will often use this notation to write the equations defining various structures
without mentioning the elements of our algebras or modules. A nice account of the various choices and techniques used in sign
conventions can be found in \cite{Deligne1}.

 Filtrations will occur throughout this paper and our initial approach will be rather general. We partially order $\Z^k$ for any $k \in \N$ with the product order. A  lattice filtered complex will consist of the data $\mathbf{V} = \left(V, \{V_\alpha\}_{\alpha \in \Z^k}\right)$ for some $k \in \N$, where $V$ is an object in $\co$ and $\{V_\alpha\}_{\alpha \in \Z^k}$ is a collection of subcomplexes partially ordered by inclusion. If $k = 1$, we simply call $V$ filtered. Given two lattice filtered complexes $\mathbf{V} = \left(V, \{V_\alpha\}_{\alpha \in \Z^k}\right)$ and $\mathbf{W} = \left(W, \{W_\beta \}_{\beta \in \Z^l}\right)$, we define the lattice filtered tensor product and internal hom as follows. 
\begin{equation*} \mathbf{V} \otimes \mathbf{W} = \left(V \otimes W , \{ V_\alpha \otimes W_\beta \}_{\left(\alpha , \beta \right) \in \Z^{k + l}} \right) \end{equation*}
and
\begin{equation*} \inthom \left(\mathbf{V}, \mathbf{W} \right) = \left(\inthom \left(V, W\right) , \{\inthom_{- \alpha , \beta } \left(V , W\right) \}_{\left(\alpha , \beta \right) \in \Z^{k + l}} \right) \end{equation*}
where $\inthom_{-\alpha , \beta} \left( V, W \right) = \{\phi : V \to W | \phi ( V_\alpha ) \subseteq W_\beta \}$. The category of lattice filtered complexes and filtered complexes will be denoted $\colf$ and $\cof$ respectively. We note that the above constructions make $\colf$ a closed symmetric monoidal category. 

Given a \dg category $\mcd$, we define the category $\mcd^{lf}$ to have objects consisting of the data $\mathbf{E} = \left( E, \{E_\alpha \}_{\alpha \in \Z^k} \right)$ where $\left( \yoneda{E}{E^\prime} , \{\yoneda{E_\alpha}{E^\prime} ) \}_{\alpha \in \Z^k }  \right) \in \colf$ for every object $E^\prime \in \mcd$. The cochain complex of morphisms between $\mathbf{D}$ and $\mathbf{E}$ is simply $\Hom_\mcd (D, E)$. Restricting to the case of $k=1$ yields the definition of $\mcd^f$. 

The total filtration functor $Tot : \colf \to \cof$ is defined as 
\begin{equation*} Tot \left( V , \{V_\alpha \}_{\alpha \in \Z^k}  \right) = \left( V , \{ \cup_{|\alpha | = n} V_\alpha \}_{n \in \Z} \right)\end{equation*}
for $k \ne 0$ where $|\alpha | = a_1 + \cdots + a_k$ for $\alpha = (a_1, \ldots, a_k)$. One needs to deal with $k = 0$ a bit differently and define $Tot \left( V, \{V_0 \}\right) = \left( V, \{V^\prime_n \} \right)$ with $V^\prime_n = 0$ for $n < 0$ and $V^\prime_n = V_0$ otherwise.

Now suppose $\mathbf{V} \in \cof$ is a filtered complex. Letting $Z_n$ be the subspace of cocycles in $V_n$, we have that the cohomology 
\begin{equation*} H^* ( \mathbf{V} )  = ( H^* (V) , \{ H^* (V)_n =  Z_n / im (d) \cap Z_n  \}_{n \in \Z} )\end{equation*}
is then a filtered object in $\gr$. We define the upper and lower length of the filtration as follows. If $\cup_n H^* (\mathbf{V} ) \ne H^* (V)$ we take $\ell_+ (\mathbf{V} ) = \infty$ and if $\cap_n H^* (\mathbf{V} ) \ne 0$ then $\ell_- (\mathbf{V} ) = - \infty$. Otherwise, we define these lengths as
\begin{equation} \label{eq:length}
\ell_+ (\mathbf{V} ) = \inf \{n : H^* (V)_n = H^* (V) \} \hspace{.3in} \ell_- (\mathbf{V}) = \inf \{ n : H^* (V)_n \ne 0 \} .
\end{equation}
By the length $\ell (\mathbf{V})$ of $\mathbf{V}$ we will mean the maximum of $|\ell_+ (\mathbf{V} ) |$ and $| \ell_- (\mathbf{V} )|$. We extend these definitions to $\mathbf{V} \in \colf$ by taking length of $Tot (\mathbf{V} )$. 

Given a \dg category $\mcd$ and an object $\mathbf{E}  \in \mcd^{lf}$, we define the lengths of $\mathbf{E} $ as 
\begin{eqnarray*}
\ell_+ (\mathbf{E} ) & = & \sup \{ \ell_+ (\yoneda{\mathbf{E}}{E^\prime}) : E^\prime \in \mcd \} , \\ 
\ell_- (\mathbf{E} ) & = & \inf \{ \ell_- (\yoneda{\mathbf{E}}{E^\prime} ): E^\prime \in \mcd \} , \\ 
\ell (\mathbf{E} ) & = & \sup \{ \ell (\yoneda{\mathbf{E}}{E^\prime}) : E^\prime \in \mcd \} .
\end{eqnarray*}
Given two \dg categories $\mcd$, $\tilde{\mcd}$, a \dg functor $F : \mcd \to \tilde{\mcd}^f$ and $E \in \mcd$, we take $\ell_\pm^F (E ) = \ell_\pm (F (E))$ and $\ell^F = \sup \{\ell^F (E) : E \in \mcd \}$. One can consider $\ell^F$ as a generalization of the cohomological dimension of a functor between abelian categories. Note that in the \dg category $\co$ the two notions of length are equal. In other words, the definition given by equations \ref{eq:length} yield the same quantities as the definition above using the Yoneda embedding $\yoned{\_}$.

A motivating example for the above definitions is the case where $\mcd$ and $\tilde{\mcd}$ are categories of bounded below cochain complexes of injective objects in abelian categories $D$ and $\tilde{D}$. Note that these categories admit embeddings into their filtered versions by sending any complex $E^*$ to $\left( E^* , \{ \tau_n (E^* ) \}_{n \in \Z}\right) $ where $\tau_n (E^* )^k = E^k$ for $k \leq n$ and zero otherwise. Assuming $D$ and $\tilde{D}$ have enough injectives, any functor $F : D \to \tilde{D}$ has the (pre)derived \dg functor $RF : \mcd \to \tilde{\mcd}$ and after composition with the embedding above one has a \dg functor $\mcf : \mcd \to \tilde{\mcd}^f$. It is then plain to see that $\ell^{\mcf}$ equals the cohomological dimension of $F$.

\subsection{$A_\infty$-algebras}

One of the fundamental structures in our study is an $A_\infty$-algebra.

\begin{defn} A non-unital $A_\infty$-algebra $A$ is an object $A \in \co$ and a collection of degree $1$ maps
$\mu_A^n : (\susp A)^{\otimes n} \to \susp A$ for $n > 0$ satisfying
the relation \begin{equation*} \sum_{k = 0}^n \left[ \sum_{r =
0}^{n - k} \mu_A^{n - k + 1 } \circ (\one^{\otimes r} \otimes
\mu_A^{k} \otimes \one^{\otimes (n - r - k)} ) \right] = 0
\end{equation*} for every $n$. \end{defn}

We note that it is common to see the definition utilizing the desuspended maps $\susp^{-1}_\otimes (\mu_A^n )$ which involves more intricate signs. 

In this paper we will assume that our $A_\infty$-algebras come
equipped with a strict unit. We recall that this means there
exists a unit map $\unit : \K \to A[1]$ where
\begin{eqnarray}\label{eq:unit1} \mu_A^2 (\unit \otimes \one ) = \one =  - \mu_A^2 (\one
\otimes \unit ) & & \\ \label{eq:unit2} \mu^n ( \one^{\otimes r} \otimes \unit
\otimes \one^{\otimes (n - r - 1)}) =  0 & & \hspace{.4in}
\hbox{for } n \ne 2 \end{eqnarray} We will normally write $e_A$ for $\unit (1)$ (or $e$ if the algebra is
implicit).

If $A$ is an $A_\infty$-algebra, we take $A^{op}$ to be the algebra with structure maps $\mu_{A^{op}}^k = \mu_A^k \circ \sigma_k$ where $\sigma_k : (\susp A)^{\otimes k} \to (\susp A)^{\otimes k}$ reverses the ordering of the factors via the symmetric monoidal transformation $\gamma$ in \ref{eq:symmon}.

It is immediate that the cohomology $H^* (A)$ defined with respect to $\mu^1_A$ is a graded
$\K$-algebra with multiplication induced by $\mu_A^2$. However,
the higher products determine more structure than the
cohomology algebra can express on its own. In order to see this we
need to be able to compare two different algebras. A homomorphism
of $A_\infty$-algebras is defined as follows.

\begin{defn} If $(A, \mu^*_A)$ and $(B, \mu^*_B)$ are $A_\infty$-algebras then a collection of graded maps $\phi^n : (\susp A)^{\otimes n} \to \susp B$ for $n \geq 1$ is an $A_\infty$-map if
\begin{eqnarray*} \sum_{k = 1}^n \left[ \sum_{r = 1}^{n - k} \phi^{n - k + 1 } \circ (\one^{\otimes r} \otimes \mu_A^{k} \otimes \one^{\otimes (n - r - k)} ) \right]
= \\ \indent \indent  \indent \indent \sum_{j = 1}^n \left[
\sum_{i_1 + \cdots + i_j = n} \mu_B^j \circ (\phi^{\otimes i_1}
\otimes \cdots \otimes \phi^{\otimes i_j} ) \right]
\end{eqnarray*}
\end{defn}

A strictly unital homomorphism is also required to preserve the unit as well as satisfying the identities 
\begin{equation*}  \phi^{r + s + 1} \circ ( \one^{\otimes r} \otimes \unit \otimes \one^{\otimes s}) = 0 \end{equation*} 
for all $r + s > 0$. The category of unital and non-unital $A_\infty$-algebras will be denoted $Alg_\infty$ and $Alg_\infty^{nu}$ respectively.

When all maps $\phi^k = 0$ except $\phi^1$, we call $\{\phi^k\}$
strict. If there is an $A_\infty$-map $\epsilon_A : A \to \K$ we
will call $A$ augmented.  Any augmented, strictly unital $A_\infty$ algebra is
required to satisfy the equation $\epsilon_A \unit = 1_\K$. 

It is important to observe that $[\phi^1]$ induces an algebra homomorphism $H^*
(A)$ to $H^* (B)$ so that cohomology is a functor from
$A_\infty$-algebras to ordinary algebras. When the induced map
$[\phi^1]$ is an isomorphism, we call $\phi^*$ a
quasi-isomorphism. The following proposition can be found in any
of the basic references given above.

\begin{prop} Given a quasi-isomorphism $\phi^* : A \to B$ there exists a quasi-isomorphism $\psi^* : B \to A$ for which $[\phi^1]$ and $[\psi^1]$ are inverse. \end{prop}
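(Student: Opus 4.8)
The plan is to construct the quasi-inverse $\psi^*$ by inductively solving the $A_\infty$-homomorphism relations degree by degree, exploiting that $[\phi^1]$ is already invertible on cohomology. First I would handle the linear term: since $\phi^1 : A \to B$ is a cochain map inducing an isomorphism on $H^*$, standard homological algebra over a field lets me choose a cochain map $\psi^1 : B \to A$ with $[\psi^1] = [\phi^1]^{-1}$; over a field every complex is quasi-isomorphic to its cohomology, so such a splitting exists (one picks compatible decompositions $A \cong H^*(A) \oplus (\text{acyclic})$ and $B \cong H^*(B) \oplus (\text{acyclic})$ as in the Homotopy Transfer setup). This already gives $[\psi^1]$ inverse to $[\phi^1]$ on the algebra level, which is the assertion of the proposition.

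To actually produce the full $A_\infty$-map $\psi^*$, I would proceed by induction on $n$, assuming $\psi^1, \dots, \psi^{n-1}$ have been built so that the $A_\infty$-map equations hold in arities $< n$. The arity-$n$ equation, written using the $\susp_\otimes$ notation, takes the schematic form $\mu^1_A \circ \psi^n - \psi^n \circ (\text{sum of }\mu^1_B\text{'s}) = \Phi_n$, where $\Phi_n$ is a fixed expression built out of $\mu^{\geq 2}_A$, $\mu^{\geq 2}_B$, $\phi^{\leq n}$, and $\psi^{<n}$. The left-hand side is precisely the differential of $\psi^n$ viewed as an element of the Hom-complex $\Hom_\co((\susp B)^{\otimes n}, \susp A)$, so solving for $\psi^n$ amounts to showing $\Phi_n$ is a coboundary in that complex. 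Using the induction hypothesis together with the $A_\infty$-relations for $A$ and $B$ and the homomorphism relations for $\phi$, a direct (if tedious) calculation shows $\Phi_n$ is a cocycle; its cohomology class is then computed to vanish because $[\phi^1]\circ[\psi^1] = \mathrm{id}$ and $[\psi^1]\circ[\phi^1] = \mathrm{id}$ on $H^*$, which forces the obstruction class to be zero. Hence $\psi^n$ exists, and one can arrange it to respect strict unitality by correcting with the unit identities \ref{eq:unit1}, \ref{eq:unit2}.

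The main obstacle is the cocycle-and-coboundary bookkeeping for $\Phi_n$: verifying that the obstruction is closed requires carefully combining the full tower of $A_\infty$-relations, and the sign management (which is exactly what the $\susp_\otimes$ convention is designed to streamline) is where errors creep in. I would lean on the observation, recorded in the excerpt, that $\sigma : I \to \susp$ can be used to transport all signs to the desuspended bar picture, so that the computation reduces to the standard statement that the bar differential squares to zero and that $\phi$ induces a DG-coalgebra map $B^c(A) \to B^c(B)$; invertibility on $H^*$ then makes this map a quasi-isomorphism of (conilpotent) DG coalgebras, and one invokes the fact that such quasi-isomorphisms admit homotopy inverses. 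This is really the conceptual content, and I would likely present the bar-coalgebra formulation as the clean proof while noting that unwinding it gives the explicit inductive construction of $\psi^n$ above. Since the proposition as stated only claims the existence of $\psi^*$ with $[\phi^1], [\psi^1]$ inverse, once $\psi^1$ is in hand the remaining $\psi^{\geq 2}$ are a formality guaranteed by the coalgebra argument, and I would remark that this is standard, citing the references already invoked in the excerpt.
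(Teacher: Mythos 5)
The paper offers no argument for this proposition at all---it simply points to the standard references---so the only question is whether your sketch stands on its own, and as written it has a genuine gap at its central step. In the inductive construction you assert that the arity-$n$ obstruction $\Phi_n$ is a coboundary ``because $[\phi^1]\circ[\psi^1]=\mathrm{id}$ and $[\psi^1]\circ[\phi^1]=\mathrm{id}$.'' That is not a computation, and for a \emph{fixed} choice of the lower-order terms it is false in general: the class $[\Phi_n]$ depends on the choices of $\psi^2,\dots,\psi^{n-1}$ (modifying $\psi^{n-1}$ by a cocycle $z$ changes $\Phi_n$ by terms such as $\mu_A^2\circ(\psi^1\otimes z)$, which are cocycles but usually not coboundaries), and its vanishing encodes Massey-product data that uses the whole tower $\phi^{\leq n}$, not merely the invertibility of $[\phi^1]$. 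So the naive induction can get stuck unless you allow yourself to revise earlier terms, or you route the argument through a device that makes all those choices coherently. The standard repair---and the one the paper has set up, since the minimal model proposition is stated immediately afterwards---is: take quasi-isomorphisms $i_A : H^*(A)\to A$ and $p_B : B\to H^*(B)$ supplied by homotopy transfer, note that $p_B\circ\phi\circ i_A$ is a quasi-isomorphism of minimal $A_\infty$-algebras and hence (invertible linear term, zero differential) an honest $A_\infty$-isomorphism, invert it, and set $\psi = i_A\circ(p_B\circ\phi\circ i_A)^{-1}\circ p_B$, which visibly has $[\psi^1]=[\phi^1]^{-1}$.

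Your fallback bar-coalgebra argument is the right conceptual route but also misquotes the key fact: it is \emph{not} true that plain quasi-isomorphisms of conilpotent dg coalgebras admit homotopy inverses; the weak equivalences of the model structure on $Cog$ are not the quasi-isomorphisms, and the statement actually available (the theorem of Lef\`evre-Hasegawa quoted later in this very section) is that bar constructions are fibrant--cofibrant and that weak equivalences between such objects are homotopy equivalences. The map you need is $\acbr\phi : \acbr A\to\acbr B$, and the correct observation is that it is a \emph{filtered} quasi-isomorphism for the primitive (length) filtration: on the associated graded it is $\bigoplus_n(\susp\phi^1)^{\otimes n}$, a quasi-isomorphism by K\"unneth over $\K$, and filtered quasi-isomorphisms are weak equivalences. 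With that correction the homotopy-inverse coalgebra map corresponds to an $A_\infty$-morphism $\psi : B\to A$, and restricting the coalgebra homotopies to primitives gives $[\psi^1]=[\phi^1]^{-1}$. So your outline is salvageable and close to what the cited references do, but the obstruction-vanishing claim and the quasi-isomorphism-of-coalgebras claim are precisely the two points where, as written, there is no proof yet.
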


Some of the $A_\infty$-algebras discussed in this paper satisfy additional conditions.

\begin{defn} i) An $A_\infty$-algebra is formal if it is quasi-isomorphic to its cohomology algebra. \\ ii) An $A_\infty$-algebra is compact if its cohomology algebra is finite dimensional. \end{defn}

While it is rarely the case that an $A_\infty$-algebra is formal, there is an $A_\infty$-structure on its cohomology, called the minimal model, which yields a quasi-isomorphic $A_\infty$-algebra. It is a well known fact that, for $(A, \mu^*_A)$, this is a uniquely defined $A_\infty$-structure $(H^* (A), \tilde{\mu}_A^*)$ with $\tilde{\mu}_A^1 = 0$  (here $\tilde{\mu}_A^2 = [\mu_A^2]$ and the higher $\tilde{\mu}_A^*$ are determined by a tree level expansion formula). 
Let us state this as a proposition.

\begin{prop} For any $A_\infty$-algebra $(A, \mu_A^*)$ there is an $A_\infty$-algebra $(H^* (A), \tilde{\mu}_A^*)$, uniquely defined up to $A_\infty$-isomorphism, and a quasi-isomorphism $\phi_A : A \to H^* (A)$. We call $(H^* (A), \tilde{\mu}_A^*)$ the minimal model of $(A, \mu_A^*)$. When there exists a minimal model with $\tilde{\mu}_A^k = 0$ for all $k > 2$, we call $A$ formal. \end{prop}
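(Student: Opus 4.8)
The plan is to establish the existence of the minimal model via the homological perturbation lemma, and then to obtain uniqueness up to $A_\infty$-isomorphism by a standard obstruction-theoretic argument. First I would fix a choice of $\K$-linear contraction of $(A, \mu_A^1)$ onto its cohomology: since we work over a field, the complex $(A, \mu_A^1)$ splits, so one can choose subspaces identifying $H^*(A)$ with the harmonic representatives inside $A$, together with a chain homotopy $h : A \to A$ of degree $-1$, a projection $p : A \to H^*(A)$, and an inclusion $i : H^*(A) \to A$ satisfying $i p - \one = \mu_A^1 h + h \mu_A^1$, $p i = \one$, and (after the usual adjustments) $h^2 = 0$, $h i = 0$, $p h = 0$. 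Transferring the $A_\infty$-structure $\mu_A^*$ along this contraction by the explicit tree-summation formula then produces operations $\tilde\mu_A^k$ on $H^*(A)$; verifying that they satisfy the $A_\infty$-relations is the content of the homotopy transfer theorem, and one reads off directly from the formula that $\tilde\mu_A^1 = 0$ and $\tilde\mu_A^2 = [\mu_A^2]$. The same formula simultaneously assembles the components $\phi_A^k$ of a quasi-isomorphism $\phi_A : A \to (H^*(A), \tilde\mu_A^*)$ (with $\phi_A^1 = p$ up to the identification), and one checks $[\phi_A^1]$ is an isomorphism by construction. I would also note the strictly unital version: if $A$ is strictly unital one chooses the contraction so that $h$, $i$, $p$ are compatible with the unit, and the transferred structure is again strictly unital.

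For uniqueness, suppose $(H^*(A), \nu^*)$ is another $A_\infty$-structure with $\nu^1 = 0$, $\nu^2 = [\mu_A^2]$, together with a quasi-isomorphism to $A$. Composing with a quasi-isomorphism back to $(H^*(A), \tilde\mu^*)$ (using the Proposition on reversibility of quasi-isomorphisms stated just above), we obtain a quasi-isomorphism $f : (H^*(A), \nu^*) \to (H^*(A), \tilde\mu^*)$ whose linear term $f^1$ induces the identity on cohomology; since $\nu^1 = \tilde\mu^1 = 0$, the cohomology of each side is just $H^*(A)$ itself, so $f^1$ is already an isomorphism of graded vector spaces. After composing with the strict isomorphism $(f^1)^{-1}$ we may assume $f^1 = \one$. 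Then the $A_\infty$-map equations, read degree by degree in $n$, say that at each stage the difference $\nu^n - \tilde\mu^n$ is a coboundary in the relevant Hochschild-type complex, with the higher components $f^{n-1}, f^{n-2}, \dots$ providing the primitive; inductively one modifies $f$ (leaving $f^1 = \one$) to kill all higher components, showing $(H^*(A), \nu^*) \cong (H^*(A), \tilde\mu^*)$ as $A_\infty$-algebras. The final sentence of the proposition, that $A$ is called formal exactly when some minimal model has vanishing higher products, is then a definition and requires nothing further beyond observing consistency with the earlier Definition of formality — by uniqueness, if one minimal model is the trivial (strictly associative) structure then every minimal model is $A_\infty$-isomorphic to it, hence $A$ is quasi-isomorphic to $H^*(A)$ with its ordinary product.

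I expect the main obstacle to be purely expository: both halves of this proposition are classical (Kadeishvili's theorem and its uniqueness refinement), so rather than a genuine difficulty the real work is in pinning down conventions — the desuspension signs, the precise side conditions on the contraction $(i,p,h)$ needed to make the tree formula well-defined, and the bookkeeping in the inductive uniqueness argument so that the unit is preserved throughout. Since the paper has already set up its sign conventions via $\susp$, $\sigma$, and $\susp_\otimes$ in Section \ref{subsub:lf}, I would phrase the transfer formula using $\susp_\otimes$ to suppress the signs, cite \cite{Deligne1} for the convention translation, and reference one of the standard accounts (e.g. Lef\`evre-Hasegawa or Keller) for the detailed verification, presenting here only the statement of the contraction data, the shape of the tree sum, and the skeleton of the uniqueness induction.
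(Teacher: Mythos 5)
The paper offers no proof of this proposition — it is stated as a well-known fact (Kadeishvili's theorem) and implicitly deferred to the standard references \cite{Lefevre, Keller} — so your job here is really to reproduce the classical argument, and your existence half does exactly that: fixing a contraction $(i,p,h)$ with the usual side conditions, transferring by the tree-summation formula, and reading off $\tilde\mu^1_A = 0$, $\tilde\mu^2_A = [\mu^2_A]$, together with the quasi-isomorphism (in either direction, using the paper's earlier proposition that quasi-isomorphisms are invertible up to homotopy). That is correct and is the argument the paper has in mind.

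Your uniqueness paragraph, however, overshoots in a way you should fix. Once you have a quasi-isomorphism $f : (H^*(A), \nu^*) \to (H^*(A), \tilde\mu^*)$ between two minimal structures, the differential on both sides is zero, so $f^1$ \emph{is} the induced map on cohomology and hence an isomorphism of graded vector spaces; but an $A_\infty$-morphism whose linear term is invertible is already an $A_\infty$-isomorphism (its inverse is constructed order by order), and that is all the proposition asserts — uniqueness up to $A_\infty$-isomorphism, not up to strict isomorphism. Your proposed induction "modifying $f$ to kill all higher components" would, if it worked, force $\nu^n = \tilde\mu^n$ for all $n$, i.e.\ produce a strict isomorphism; this is false in general (the higher transferred products genuinely depend on the contraction and are only well defined modulo non-strict $A_\infty$-isomorphisms), and fortunately it is also unnecessary. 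Replace that step by the one-line observation about invertibility of $A_\infty$-morphisms with invertible $f^1$, and the proof is complete; the remarks about unit-compatible contractions and about formality being consistent with the earlier definition are fine as stated.
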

It will be important to have at our disposal another equivalent
definition, the algebra bar construction, for which we closely follow \cite{Lefevre} and
\cite{Keller}. First, given $V \in \gr$ we denote the tensor
algebra and coalgebra by $T^a V$ and $T^c V$ respectively. As graded vector spaces, both are equal to
\begin{equation*} T V = \bigoplus_{n = 0}^\infty
V^{\otimes n} . \end{equation*} 
For space considerations, we will use bar notation and write $[v_1| \cdots |v_n]$, or simply $\mathbf{v}$, for $v_1 \otimes
\cdots \otimes v_n$  for an arbitrary element of $TV$. 
These spaces are bigraded, with one grading denoting
the length of a tensor product, and the other denoting the total
degree. Our notation conventions for these gradings will be
\begin{equation*} (T V)^{r, s} = \left\{[v_1 | \cdots |v_r] : \sum
|v_i| = s \right\} .
\end{equation*}
In many situations, we will be interested only in the length
grading, in which case we use the notation 
\begin{equation*} (T V)_{n} = \oplus_{k = 0}^n (T V)^{k, \bullet} \hspace{.4in} (T V)_{> n} = \oplus_{k > n} (T V)^{k , \bullet} . \end{equation*}

The algebra map for $T^a V$ is the usual product and the coalgebra map $\Delta : T^c V \to T^c V \otimes_\K T^c V$ is defined as
\begin{equation*} \Delta [v_1 | \cdots | v_n] = \sum_{i = 0}^n [v_1| \cdots |v_i] \otimes [v_{i + 1}| \cdots | v_n] \end{equation*} where the empty bracket $[]$ denotes the identity in
$\K$. 

The tensor coalgebra naturally lives in the category of
coaugmented, counital, dg coalgebras $Cog^\prime$. The objects in
this category consist of data $(C, d, \eta, \epsilon)$ where $C$
is a coalgebra, $d$ is a degree $1$, square zero, coalgebra
derivation, $\eta: C \to \K$ and $\epsilon: \K \to C$  are the counit and coaugmentation satisfying
$\eta \epsilon = 1_\K$. However, this category is too large for
our purposes and we instead consider a subcategory $Cog$
consisting of cocomplete objects. To define these objects,  take
$\pi: C \to \ov{C} = C / \K$ to be the cokernel of $\epsilon$.
Consider the kernel $C_{n} = \ker (\tilde{\Delta}^n )$ where
\begin{equation*}\tilde{\Delta}^n : C
\stackrel{\Delta^n}{\longrightarrow} C^{\otimes n}
\stackrel{\pi^{\otimes n}}{\longrightarrow} \ov{C}^{\otimes n} .
\end{equation*} 
Elements of $C_{n}$ are called $n$-primitive and $C_{1}$ is referred to as the coaugmentation ideal.
They form an increasing sequence 
\begin{equation*} C_{0} \subset C_{1} \subset \cdots .\end{equation*} 
This defines a natural inclusion $Cog^\prime \to (Cog^\prime)^f$ and we say that the augmented coalgebra $C$ is cocomplete if $C = \lim C_{n}$. One easily observes that the tensor coalgebra is an object of $Cog$ as
\begin{equation*} (T^c V)_n = (T V)^{n, \bullet} .\end{equation*}
Moreover, the tensor coalgebra $T^c V$ is cofree in the category $Cog$ (i.e. the tensor coalgebra functor is right
adjoint to the forgetful functor).

Now we recall the (coaugmented) bar functor
\begin{equation*} \acbr : Alg^{nu}_\infty \to Cog \end{equation*} 
which takes any non-unital $A_\infty$-algebra $(A, \mu_A)$ to $\acbr A = (T^c (\susp A) , b_A,
\eta_{\acbr A} , \epsilon_{\acbr A} )$. The definitions of the
counit and coaugmentation are clear. We define $b_A : T^c (\susp A) \to
T^c (\susp A)$ via its restriction to $(\susp A )^{\otimes n}$ as
\begin{equation*} b_A|_{(\susp A)^{\otimes n}} = \sum_{k = 1}^n \left(\sum_{r = 0}^{n - k}
\one^{\otimes r} \otimes \mu_A^{k} \otimes \one^{\otimes (n - r -
k)} \right) . \end{equation*}  
As it turns out, there is a model structure on $Cog$ and the
following theorem describes the essential image of $\acbr$ in
these terms.

\begin{thm}[\cite{Lefevre}, 1.3] The functor $\acbr$ is a full and faithful embedding of $Alg_\infty^{nu}$ into
the fibrant-cofibrant objects of $Cog$. Furthermore, minimal
models are sent to minimal models. \end{thm}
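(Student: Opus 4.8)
The plan is to verify the three assertions of the theorem separately, treating the bar construction $\acbr$ as a functor landing in $Cog$ and analyzing it through the cofreeness of the tensor coalgebra. The central observation is that a counital dg coalgebra morphism $T^c(\susp A) \to T^c(\susp B)$ is, by the cofree/adjoint property recalled just above, completely determined by its corestriction $T^c(\susp A) \to \susp B$, and the Leibniz/coderivation compatibility with $b_A$, $b_B$ translates precisely into the defining identity for an $A_\infty$-morphism $\{\phi^n\}$. Thus faithfulness is immediate (two $A_\infty$-maps with the same bar map have the same corestriction, hence agree in every arity), and fullness amounts to checking that an arbitrary dg coalgebra map between bar constructions, once decomposed via cofreeness into its components $\phi^n : (\susp A)^{\otimes n} \to \susp B$, automatically satisfies the $A_\infty$-morphism equation (and, in the unital setting, that strict unitality of the coalgebra map corresponds to the strict-unitality identities $\phi^{r+s+1}\circ(\one^{\otimes r}\otimes\unit\otimes\one^{\otimes s})=0$). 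This is a direct, if bookkeeping-heavy, matching of the coderivation condition $b_B \circ F = F \circ b_A$ against the two sums in the definition of an $A_\infty$-map; the signs are handled uniformly by the $\susp_\otimes$ convention fixed in Section~\ref{subsub:lf}.

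First I would set up the dictionary: given $F : \acbr A \to \acbr B$ in $Cog$, write $F = \sum_n F^{(n)}$ where $F^{(n)}$ is the composite of $F$ with the projection to $(\susp B)^{\otimes n}$, and observe that $F^{(n)}$ is recovered from $\phi := \pi_{\susp B} \circ F$ as an iterated coproduct $\phi^{\otimes n}\circ\Delta^{n}$ (counitality forces $F^{(0)}=0$ on the coaugmentation ideal). Then I would unwind $F \circ b_A = b_B \circ F$: the left side, after projecting to $\susp B$, produces exactly $\sum_k \sum_r \phi^{n-k+1}\circ(\one^{\otimes r}\otimes\mu_A^k\otimes\one^{\otimes(n-r-k)})$, while the right side produces $\sum_j \sum_{i_1+\cdots+i_j=n}\mu_B^j\circ(\phi^{\otimes i_1}\otimes\cdots\otimes\phi^{\otimes i_j})$, which is the asserted equality; conversely these identities in every arity, reassembled via cofreeness, give back a coalgebra map commuting with the differentials. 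For the unital case one checks the extra bookkeeping around the strict unit $\unit$ separately, and for the "minimal models are sent to minimal models" clause one simply notes that $\acbr$ sends $\mu_A^1$ to the weight-one part of $b_A$, so $A$ has $\mu_A^1=0$ exactly when $b_A$ raises tensor length by at least one, which is the minimality (fibrant-cofibrant with no differential in the lowest weight) condition in the model structure on $Cog$; essential image being the fibrant-cofibrant objects is then quoted from \cite{Lefevre}, 1.3, as the excerpt explicitly attributes the statement there.

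The main obstacle I anticipate is purely the sign accounting in the fullness direction: verifying that the coderivation condition unpacks termwise into the $A_\infty$-morphism equation with exactly the signs in the stated definition, and that the cocompleteness hypothesis (i.e. working in $Cog$ rather than $Cog'$) is what guarantees the sums $F = \sum_n F^{(n)}$ converge and that $F$ is determined by its corestriction. Since the paper has fixed all conventions through $\susp_\otimes$ and $\sigma$ precisely so that these signs come out uniformly, I would present the sign verification compactly — writing the structure equations in the element-free $\susp_\otimes$ notation rather than chasing $(-1)^{|v_i|}$ factors — and simply record that the two coderivation conditions $F b_A = b_B F$ and $b_A^2 = 0$, $b_B^2=0$ are the coalgebra-level shadows of the $A_\infty$-morphism and $A_\infty$-algebra relations. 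The remaining content — that the image lands among fibrant-cofibrant objects and that this is an equivalence onto them — is not something to reprove here; it is the cited result, and the theorem statement credits it as such.
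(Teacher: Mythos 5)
The paper offers no internal proof of this statement: it is imported verbatim from Lef\`evre-Hasegawa (\cite{Lefevre}, 1.3), so there is nothing in the text to compare your argument against line by line. That said, your sketch of the full-faithfulness part is the standard and correct argument: since $T^c(\susp B)$ is cofree in the cocomplete category $Cog$, a coaugmented counital dg coalgebra map $F:\acbr A\to\acbr B$ is determined by its corestriction $\pi_{\susp B}\circ F$, and unwinding $F\circ b_A=b_B\circ F$ componentwise reproduces exactly the $A_\infty$-morphism identity as stated in the paper's definition; faithfulness is immediate from the same decomposition, and deferring the model-structure content (that the image lies among, and exhausts, the fibrant-cofibrant objects) to the citation is consistent with what the paper itself does.

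Two concrete corrections, both in the minimal-models clause. First, the direction of the weight shift is backwards: the coderivation component induced by $\mu_A^k$ maps $(\susp A)^{\otimes n}$ to $(\susp A)^{\otimes(n-k+1)}$, so $b_A$ preserves tensor length for $k=1$ and \emph{lowers} it for $k\geq 2$; minimality $\mu_A^1=0$ is therefore the vanishing of the length-preserving component of $b_A$ --- equivalently, the differential induced on the primitives $\susp A$ is zero --- not the condition that $b_A$ ``raises tensor length by at least one.'' Second, to turn that clause into a proof rather than a gesture you need Lef\`evre's actual definition of a minimal object of $Cog$ (precisely the vanishing of this linear part of the differential) and the compatibility of that notion with the model structure; as written, you are again quoting the cited source, which is acceptable here but should be flagged as such rather than presented as a check.
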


There are several variants of this construction, most importantly
the ordinary bar construction $\abr A = ({T} (\susp A)_{> 0}, b_A)$ which takes
values in cocomplete coalgebras. We fix notation for the inclusion to be
\begin{equation} \label{eq:barinc} \iota_A : \abr A \hookrightarrow \acbr A .\end{equation}
 The differential is simply the restriction of the one defined in the coaugmented case.
It is helpful to understand $\abr A$ when $A$ is an ordinary
algebra $A$. In this case, we see that $\abr A$ is just the
augmented bar resolution for $A$ (and hence, acyclic).


The bar construction of $A$ inherits the increasing filtration
\begin{equation*} \acbn{n} A := \oplus_{i \leq n} (\acbr A)^{i, \bullet} = (\acbr A)_{n} \end{equation*} We refer to this, and the module variants to come, as the length
filtration.

We note here that one advantage of the bar construction is the ease at which one can discuss structures that are more difficult to define in the category $Alg_\infty$. One example of this is the tensor product of two $A_\infty$-algebras $A, A^\prime$ which has more than one fairly intricate definition. In $Cog$ we define the tensor product of $\acbr A \otimes \acbr A^\prime$ in the usual way. We then say that $B \in Alg_\infty$ is quasi-isomorphic to the tensor product if $B = A \otimes A^\prime$ and $\acbr B$ is quasi-isomorphic to $\acbr A \otimes \acbr A^\prime$ in $Cog^f$. See \cite{Loday} for an article comparing various constructions of a natural quasi-isomorphism. 

\subsection{$A_\infty$ polymodules}

 We start this section with a general definition of a module over several $A_\infty$-algebras which we call a polymodule. It is both useful and correct to think of a polymodule as a bimodule with respect to the tensor product of several algebras or, even more simply, as a module over the tensor product of algebras and their opposites. This is analogous to defining an $(R, S)$ bimodule as opposed to an $R \otimes S^{op}$ module. We take this approach at the outset to avoid some of the cumbersome notation and uniqueness issues surrounding the tensor product of multiple $A_\infty$-algebras. This is accomplished utilizing the bar construction and working in the category of comodules where many structures are more accessible. One word of notational warning is that \cite{Lefevre} uses the different term polydules to define what we would call a module.

For this section, we fix $A_\infty$-algebras $A_1, \ldots, A_r$ and $B_1 , \cdots, B_s$ and write $\moduledata$ for the data $(A_1, \ldots, A_r | B_1 , \ldots, B_s )$. Let $P$ be a graded vector space and write
\begin{equation} \label{eq:poly} \cbr^\moduledata P = \acbr A_1 \otimes \cdots \otimes \acbr A_r \otimes P \otimes \acbr B_1 \otimes \cdots \otimes \acbr B_s  \end{equation}
for the bar construction of $P$.

When $\moduledata$ is fixed or understood from the context, we simply write $\cbr P$. We make a note that $\cbr P$ is naturally an object of $\co^{lf}$ where the lattice is $\Z^{r + s}$ and the filtration is induced by the length filtrations on the bar constructions. Given any $\gamma \in \Z^{r + s}$, we denote the $\gamma$ filtered piece of $\cbr P$ by $\cbr_\gamma P$. Observe also that $\cbr P$ is a cofree left comodule over the coalgebras $\acbr A_i$ and a cofree right comodule over coalgebras $\acbr B_i$ where $\Delta_{ i, P} : \cbr P \to \acbr A_i \otimes \cbr P$ and $\Delta_{P, j} : \cbr P \to \cbr P \otimes \acbr B_j$ are the comodule maps. These are defined by repeatedly applying $\gamma$ from equation \ref{eq:symmon} to permute the left factor of $\acbr A_i$ and right factor of $\acbr B_j$ to the left and right respectively, after having applied their comultiplications. We take,
\begin{equation*} \Delta_P = \Delta_{P, s} \circ \cdots \circ \Delta_{P, 1} \circ \Delta_{r, P} \circ \cdots \circ \Delta_{1, P} 
\end{equation*}
as the polymodule comultiplication from $\cbr P$ to $\acbr A_1 \otimes \cdots \otimes \acbr A_r \otimes \cbr P \otimes \acbr B_1 \otimes \cdots \otimes \acbr B_s $. The differentials on each coalgebra tensor to define the differential $d^\prime_P : \cbr P \to \cbr P$. 

\begin{defn} A non-unital $\moduledata = (A_1, \ldots, A_r | B_1, \ldots, B_s)$ polymodule $(P, \mu_P)$ is a graded vector space $P$ along with a degree $1$ map
\begin{equation} \label{eq:polymu} \mu_P : \cbr^\moduledata P \to P \end{equation}
satisfying the equation
\begin{equation} \label{eq:polymod} \mu_P  \circ [ (\one_{\acbr A_1} \otimes \cdots \otimes \one_{\acbr A_r} \otimes \mu_P \otimes \one_{\acbr B_1} \otimes \cdots \otimes \one_{\acbr B_s} ) \circ \Delta + d^\prime_P] = 0 .\end{equation}
We call the $P$ a polymodule if $\mu_P$ satisfies the unital conditions for every $i$ and $j$,
\begin{eqnarray*}
\mu_P \circ ( \epsilon_{\acbr A_1} \otimes \cdots \otimes \unit_{\acbr A_i} \otimes \cdots \otimes \epsilon_{\acbr A_r} \otimes \one_P \otimes \epsilon_{\acbr B_1} \otimes \cdots \otimes \epsilon_{\acbr B_r} ) & = & \one_P , \\ \mu_P \circ ( \epsilon_{\acbr A_1} \otimes  \cdots \otimes \epsilon_{\acbr A_r} \otimes \one_P \otimes \epsilon_{\acbr B_1} \otimes \cdots \otimes \unit_{\acbr B_j} \otimes \cdots \otimes \epsilon_{\acbr B_r} ) & = & \one_P , \\ \mu_P \circ ( \iota_{ A_1} \otimes \cdots \otimes \unit_{\acbr A_i} \otimes \cdots \otimes \iota_{ A_r} \otimes \one_P \otimes \iota_{ B_1} \otimes \cdots \otimes \iota_{ B_r}  ) & = & 0 , \\ \mu_P \circ (\iota_{ A_1} \otimes  \cdots \otimes \iota_{ A_r} \otimes \one_P \otimes \iota_{ B_1} \otimes \cdots \otimes \unit_{\acbr B_j} \otimes \cdots \otimes \iota_{ B_r} )  & = & 0 ,
\end{eqnarray*}
where $\epsilon$ is the coaugmentation and $\iota$ the inclusion from  \ref{eq:barinc}. 
\end{defn}

A bimodule is a polymodule for which $r = 1 = s$. A left module is a bimodule for which $B_1 = \K$ and similarly for a right module.  A non-unital morphism from the polymodule $P$ to $P^\prime$ is defined as any $\gr$ map $\phi : \cbr P \to P^\prime $. The collection of these maps forms a complex $\Hom_{\moduledata \mhyphen \nucmd} (P, P^\prime )$ with differential defined as
\begin{multline} \label{eq:polymor}
d \phi  = \phi \circ (\one_{\acbr A_1} \otimes \cdots \otimes \one_{\acbr A_r} \otimes \mu_P \otimes \one_{\acbr B_1} \otimes \cdots \otimes \one_{\acbr B_s}) \circ \Delta_{P}  \\ + \phi \circ d^\prime_P - (-1)^{|\phi |} \mu_{P^\prime} \circ (\one_{\acbr A_1} \otimes \cdots \otimes \one_{\acbr A_r} \otimes \phi \otimes \one_{\acbr B_1} \otimes \cdots \otimes \one_{\acbr B_s} ) \circ \Delta_P .
\end{multline}
A morphism $\phi \in \Hom_{\moduledata \mhyphen \cmd} (P, P^\prime )$ is a non-unital morphism satisfying the unital conditions
\begin{eqnarray*} \phi \circ ( \one_{\acbr A_1} \otimes \cdots \otimes \unit_{\acbr A_i} \otimes \cdots \otimes \one_{\acbr A_r} \otimes \one_P \otimes \one_{\acbr B_1} \otimes \cdots \otimes \one_{\acbr B_r} ) & = & 0 , \\ \phi \circ ( \one_{\acbr A_1} \otimes  \cdots \otimes \one_{\acbr A_r} \otimes \one_P \otimes \one_{\acbr B_1} \otimes \cdots \otimes \unit_{\acbr B_j} \otimes \cdots \otimes \one_{\acbr B_r} ) & = & 0 . \end{eqnarray*}
The fact that $\Hom_{\moduledata \mhyphen \cmd} (P, P^\prime )$ is indeed a subcomplex follows from the unital condition on algebras and modules. A morphism $\phi \in \Hom_{\moduledata \mhyphen \cmd} (P, P^\prime )$ is called strict if $\phi |_{\cbr_{> 0} P} = 0$.  A homomorphism is defined to be a cocycle in this complex. A homomorphism $\phi$ for which $\phi |_{\cbr_{\mathbf{0}} P} : P \to P^\prime$ is a quasi-isomorphism will be called a quasi-isomorphism. Given $\psi \in \Hom_{\moduledata \mhyphen \cmd} ( P^\prime, P^{\prime \prime} )$ we define composition as
\begin{equation*} \psi  \phi = \psi \circ (\one_{\acbr A_1} \otimes \cdots \otimes \one_{\acbr A_r} \otimes \phi \otimes \one_{\acbr B_1} \otimes \cdots \otimes \one_{\acbr B_s} ) \circ \Delta_P .\end{equation*}
It is a straightforward, albeit tedious check to see that these definitions make $\moduledata$ polymodules into a \dg category which we label $\moduledata \mhyphen \cmd$, or just $\cmd$. We write $H^0 (\cmd )$  ($H^* (\cmd )$) for the zeroth (graded) cohomology category .  The next proposition follows immediately from the discussion at the end of the previous section and the naturality of $\gamma$. A rigorous proof is omitted but can be assembled from results in \cite{Lefevre}.

\begin{prop} The category of filtered $\moduledata = (A_1 , \ldots, A_r | B_1, \ldots, B_s )$ polymodules is quasi-equivalent to the category of filtered left $A_1 \otimes \cdots \otimes A_r \otimes B_1^{op} \otimes \cdots \otimes B_s^{op}$-modules.
\end{prop}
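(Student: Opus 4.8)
The plan is to exhibit an explicit quasi-equivalence of \dg categories rather than merely a bijection on isomorphism classes. Write $C = A_1 \otimes \cdots \otimes A_r \otimes B_1^{op} \otimes \cdots \otimes B_s^{op}$, where the tensor product is the one defined at the end of the $A_\infty$-algebras subsection via the bar construction: $\acbr C$ is quasi-isomorphic in $Cog^f$ to $\acbr A_1 \otimes \cdots \otimes \acbr A_r \otimes \acbr B_1^{op} \otimes \cdots \otimes \acbr B_s^{op}$. First I would observe that, since $B^{op}$ has structure maps $\mu_B^k \circ \sigma_k$, the coalgebra $\acbr (B^{op})$ is naturally identified with $\acbr B$ equipped with the differential twisted by the permutation isomorphisms $\gamma$ of \ref{eq:symmon}; this is precisely the reversal built into the comodule maps $\Delta_{P,j}$. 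Thus the polymodule comultiplication $\Delta_P$ on $\cbr^\moduledata P$ is, after applying these $\gamma$'s, nothing but the left $\acbr C$-comodule structure on $\acbr C \otimes P = \acbr A_1 \otimes \cdots \otimes \acbr A_r \otimes \acbr B_1^{op} \otimes \cdots \otimes \acbr B_s^{op} \otimes P$ coming from cofreeness. Under this identification the polymodule equation \ref{eq:polymod} becomes exactly the defining equation of a left $A_\infty$ $C$-module, and the unital conditions match term by term (the conditions involving $\unit_{\acbr B_j}$ correspond to the unit of $B_j^{op}$, which coincides with that of $B_j$).

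Next I would check that this identification is compatible with morphism complexes. A non-unital polymodule morphism is a $\gr$ map $\phi : \cbr^\moduledata P \to P'$, which after the same permutations is a $\gr$ map $\acbr C \otimes P \to P'$, i.e. a non-unital $C$-module morphism; the differential \ref{eq:polymor} is visibly the module-morphism differential transported along $\gamma$, and composition via $\Delta_P$ transports to composition via the $C$-comodule comultiplication. So we get an isomorphism of \dg categories $\moduledata\mhyphen\cmd \cong C'\mhyphen\pcmd$, where $C'$ is the \emph{strict} tensor product coalgebra $\acbr A_1 \otimes \cdots \otimes \acbr B_s^{op}$. Finally, invoking the stated quasi-isomorphism $\acbr C \simeq C'$ in $Cog^f$ (and the Lef\`evre-Hasegawa embedding theorem, which says $\acbr$ is fully faithful onto fibrant-cofibrant objects, so this quasi-isomorphism lifts to an $A_\infty$-quasi-isomorphism after replacing $C'$ by a minimal model), one obtains a quasi-equivalence of module categories: restriction of scalars along an $A_\infty$-quasi-isomorphism of algebras is a quasi-equivalence of (filtered) module categories, and it respects the length filtrations because the coalgebra quasi-isomorphism is filtered.

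For the filtered statement one must track that all identifications above are filtered for the $\Z^{r+s}$-lattice filtration: the permutations $\gamma$ preserve each length-graded piece $(\acbr A_i)^{n,\bullet}$, so $\cbr_\gamma^\moduledata P$ corresponds to the length-$\gamma$ piece of $\acbr C' \otimes P$, and after totalizing these agree with the single length filtration on $\acbr C \otimes P$ up to the degeneration allowed by the $Cog^f$-quasi-isomorphism; this is where "quasi-equivalence" rather than "isomorphism" is forced, since the comparison map for the tensor product need not be filtered on the nose but only a filtered quasi-isomorphism.

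The main obstacle I anticipate is not any single identification — each is a bookkeeping exercise in Koszul signs and in unwinding $\Delta_P$ — but rather pinning down the precise sense in which $\acbr A_1 \otimes \cdots \otimes \acbr B_s^{op}$ models $\acbr C$ compatibly with filtrations, since the tensor product of several $A_\infty$-algebras is only defined up to quasi-isomorphism in $Cog^f$ and the passage from "quasi-isomorphic coalgebras" to "quasi-equivalent module categories" must be done at the filtered level. This is exactly why the paper's own statement only claims quasi-equivalence and defers to \cite{Lefevre} for the rigorous argument; I would likewise organize the proof so that the filtered comparison of tensor products is isolated as the one genuinely delicate point, with everything else reduced to the naturality of $\gamma$.
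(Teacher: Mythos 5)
Your proposal is correct and follows essentially the same route the paper intends: the paper itself omits a detailed argument, remarking only that the statement follows from the bar-construction definition of the tensor product of $A_\infty$-algebras in $Cog^f$ together with the naturality of $\gamma$, with the rigorous assembly deferred to \cite{Lefevre} --- exactly the identification of $\cbr^\moduledata P$ as a comodule over the tensor coalgebra and the filtered quasi-isomorphism with $\acbr$ of the tensor product algebra that you spell out. Your isolation of the filtered comparison of tensor coalgebras as the one delicate step (and the minor notational slip of writing $C'\mhyphen\pcmd$ for what should be the category of comodules/modules, not perfect ones) does not change the fact that this is the paper's argument, just made explicit.
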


From this, or from a direct argument, one obtains the following corollary which will be applied often implicitly.

\begin{cor} The category of filtered $(A_1, \ldots, A_r | B_1 , \ldots , B_s )$ polymodules is naturally equivalent to the category of filtered $(A_1, \ldots, A_r, B_1^{op}, \ldots, B_s^{op} | \K )$ polymodules. 
\end{cor}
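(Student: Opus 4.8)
The strategy is to exhibit an explicit isomorphism of \dg categories between $(A_1, \ldots, A_r | B_1, \ldots, B_s)\mhyphen\cmd$ and $(A_1, \ldots, A_r, B_1^{op}, \ldots, B_s^{op} | \K)\mhyphen\cmd$, rather than routing through the Proposition, since the two $A_\infty$-structures in question are literally the same data presented with different bookkeeping. First I would observe that for the bar construction of a single algebra $B$ one has $\acbr(B^{op}) = (\acbr B)^{cop}$, the co-opposite coalgebra, as follows from the definition $\mu_{B^{op}}^k = \mu_B^k \circ \sigma_k$ together with the definition of $b_B$: reversing every tensor factor intertwines the differential on $\acbr B$ with that on $\acbr(B^{op})$, at the cost of the Koszul signs packaged into $\gamma$. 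Consequently, for a graded vector space $P$, the object $\cbr^{(A_1,\ldots,A_r|B_1,\ldots,B_s)}P = \acbr A_1 \otimes \cdots \otimes \acbr A_r \otimes P \otimes \acbr B_1 \otimes \cdots \otimes \acbr B_s$ is carried, by the evident sequence of symmetric-monoidal permutation isomorphisms built from $\gamma$ (moving $P$ to the far right and reversing the $\acbr B_j$ blocks), to $\acbr A_1 \otimes \cdots \otimes \acbr A_r \otimes \acbr(B_s^{op}) \otimes \cdots \otimes \acbr(B_1^{op}) \otimes P = \cbr^{(A_1,\ldots,A_r,B_s^{op},\ldots,B_1^{op}|\K)}P$. (The reordering of the $B^{op}$ blocks among themselves is harmless, being itself another permutation; one may also simply relabel to match the statement's ordering.)

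Next I would check that this permutation isomorphism is filtered, i.e. respects the length lattice filtrations. This is immediate: $\gamma$ permutes tensor factors without changing which bar-degree graded piece $(\acbr B_j)^{i,\bullet}$ an element sits in, so the $\Z^{r+s}$-indexed filtration on the source matches the $\Z^{r+s}$-indexed filtration on the target after the corresponding permutation of coordinates. Then I would transport the structure maps: given a polymodule $(P, \mu_P)$ on the left side, define $\mu_P'$ on the right side to be $\mu_P$ precomposed with the inverse of the above permutation isomorphism. One verifies that the polymodule relation \eqref{eq:polymod} for $\mu_P$, which is phrased entirely in terms of $\Delta$ and $d'_P$ built from the $\acbr A_i$, $\acbr B_j$, becomes precisely the polymodule relation for $\mu_P'$ built from the $\acbr A_i$, $\acbr(B_j^{op})$, because the comultiplication $\Delta_P$ and the internal differential $d'_P$ are both natural with respect to $\gamma$ (naturality of the symmetric monoidal structure is exactly what makes the permuted comodule maps agree). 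The unital conditions transport the same way since $\epsilon_{\acbr B_j}$, $\unit_{\acbr B_j}$, $\iota_{B_j}$ correspond under $\acbr(B_j^{op}) = (\acbr B_j)^{cop}$ to $\epsilon_{\acbr(B_j^{op})}$, $\unit_{\acbr(B_j^{op})}$, $\iota_{B_j^{op}}$. The same recipe applied to morphism complexes shows the differential \eqref{eq:polymor}, the composition law, and the unital conditions on morphisms are all preserved, so we get a \dg functor which is visibly invertible, hence an isomorphism of \dg categories; in particular it is a quasi-equivalence.

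I expect the only genuine subtlety — and the step most prone to error — to be the sign bookkeeping: tracking that the Koszul signs introduced by the iterated $\gamma$'s in the reordering are exactly compensated by the signs in the definitions of $\mu_{B^{op}}$, of $b_{B^{op}}$, of the co-opposite comultiplication, and of the desuspension maps $\sigma$. The clean way to organize this, which I would adopt, is to never touch elements and instead work with the $\susp_\otimes$ and $\sigma$ formalism set up in Section \ref{subsub:lf}: phrase $\acbr(B^{op}) = (\acbr B)^{cop}$ as an identity of coalgebra objects in $Cog$ using only the naturality square for $\gamma$ and the defining equation $\mu_{B^{op}}^k = \mu_B^k \circ \sigma_k$, and then every subsequent compatibility is a formal consequence of $\co$ being symmetric monoidal. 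With that in hand the proof is a diagram chase; the bimodule case $r = s = 1$ — which packages an $(A|B)$-module as an $(A, B^{op}|\K)$-module, i.e. as a left $A \otimes B^{op}$-module — is the prototype and everything else is the obvious iteration. Alternatively, one can simply cite the Proposition: both sides are quasi-equivalent to the category of filtered left $A_1 \otimes \cdots \otimes A_r \otimes B_1^{op} \otimes \cdots \otimes B_s^{op}$-modules (for the right-hand side, the ``$B$'' slot is $\K$ and $(B_j^{op})^{op} = B_j$, so the associated algebra is the same), and composing one quasi-equivalence with the inverse of the other gives the claim.
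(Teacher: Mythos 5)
Your proposal is correct and matches the paper's (largely implicit) argument: the paper obtains the corollary ``from this [the preceding Proposition], or from a direct argument,'' and your identification $\acbr (B^{op}) = (\acbr B)^{cop}$ together with the $\gamma$-permutation of tensor factors is exactly that direct argument spelled out, while your fallback through the Proposition (both sides quasi-equivalent to filtered left $A_1 \otimes \cdots \otimes A_r \otimes B_1^{op} \otimes \cdots \otimes B_s^{op}$-modules) is the paper's other stated route. The only bookkeeping you elide is the residual $\acbr \K$ factor in $\cbr^{(A_1, \ldots, A_r, B_1^{op}, \ldots, B_s^{op} \mid \K)} P$, whose action is forced by strict unitality, so the identification of bar constructions and morphism complexes goes through as you describe.
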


Following \cite{BondalKapranov}, we observe that $\cmd$ is a pretriangulated category with sums and shifts defined in the obvious way and the natural cone construction $cone (\phi )$ given in the usual way. Namely,  $cone (\phi )$ is the graded vector space $ P \oplus \susp P^\prime$ and its structure morphism is
\begin{equation*} \mu_{cone (\phi )} = \left[ \begin{matrix}
\mu_P & \sigma \circ \phi \\ 0 & \mu_{\susp P^\prime} 
\end{matrix} \right] \end{equation*}
Given $\moduledata$ we let $\Unit_\moduledata = A_1 \otimes \cdots \otimes A_r \otimes B_1 \otimes \cdots \otimes B_s$ be the trivial polymodule whose structure map is induced by suspension, $\gamma$ and the algebra structure maps. A free polymodule is defined as a direct sum of copies of $\Unit_\moduledata$ and a projective polymodule as a direct summand of a free polymodule. A projective polymodule will be called finitely generated if it is a submodule of a finite sum of copies of $\Unit_\moduledata$. We define the subcategory of perfect $\moduledata$ polymodules to be the category $\pcmd$ of all polymodules quasi-isomorphic to a module built by finitely many cones of finitely generated projective polymodules. 

The concept of a polymodule is derived from the more natural notion of a differential comodule over several coalgebras in $Cog$. From this point of view, we have taken a backwards approach by defining the polymodule first, as the structure maps and definitions of morphisms are more transparent in the comodule setting. Nevertheless, we continue along our path full circle towards a realization of this structure as the bar construction of a polymodule.

Given an $\moduledata = (A_1, \ldots, A_r | B_1, \ldots, B_s)$ polymodule $(P, \mu_P)$, we take the free comodule $\cbr^\moduledata P$ as its bar construction (note that this is not free as a \dg comodule). We define its differential $b_P$ as
\begin{equation*} b_P = (\one_{\acbr A_1} \otimes \cdots \otimes \one_{\acbr A_r} \otimes \mu_P \otimes \one_{\acbr B_1} \otimes \cdots \otimes \one_{\acbr B_s} ) \circ \Delta_P + d^\prime_P . \end{equation*}
Then it follows from the defining equation \ref{eq:polymod} that $(\cbr^\moduledata P , b_P )$ is a left and right differential comodule over the coalgebras $\acbr A_i$ and $\acbr B_j$ respectively. We denote the \dg category of such \dg comodules with comodule morphisms as $\moduledata \mhyphen \ccm$ or simply $\ccm$. 

Given a morphism $\phi \in \Hom_{\moduledata \mhyphen \cmd} (P, P^\prime )$ we take $b_\phi : \cbr P \to \cbr P^\prime$ to be the map $b_\phi = (\one_{\acbr A_1} \otimes \cdots \otimes \one_{\acbr A_r} \otimes \phi_P \otimes \one_{\acbr B_1} \otimes \cdots \otimes \one_{\acbr B_s} ) \circ \Delta_P$. It then becomes an exercise that the bar construction gives a full and faithful functor from $\cmd$ to $\ccm$ whose essential image consists of free comodules. 

For our purposes, this is not enough as we wish to keep track of the length filtration throughout. The category $\moduledata \mhyphen \ccm$ has a natural embedding into $(\moduledata \mhyphen \ccm )^{lf}$ given by the primitive filtration. More concretely, given $(\mathbf{i}, \mathbf{j} ) = (i_1, \ldots, i_r, j_1, \ldots, j_s ) \in \Z^{r + s}$ we recall that $\cbr^\moduledata_{(\mathbf{i} , \mathbf{j} )} P $ is
\begin{equation*} \acbr_{i_1} A_1 \otimes \cdots \otimes \acbr_{i_r} A_r \otimes P \otimes \acbr_{j_1} B_1 \otimes \cdots \otimes \acbr_{j_s} B_s .\end{equation*}
This induces an embedding
\begin{equation} \cbr : \moduledata \mhyphen \cmd \to (\moduledata \mhyphen \ccm)^{lf}. \end{equation}
The induced length filtration on polymodule morphisms is then given by
\begin{equation} \mcf^{(\mathbf{i} , \mathbf{j} )} \Hom_{\cmd} (P, P^\prime ) = \{ \phi : \phi |_{\cbr_{-(\mathbf{i} , \mathbf{j} )} P } = 0 \} . \end{equation}
An advantage of the bar construction is the ease at which one sees the following proposition.
\begin{prop} The category $\moduledata \mhyphen \cmd$ is enriched over $\Z^{r + s}$-lattice filtered complexes. \end{prop}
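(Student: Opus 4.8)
The plan is to show that for polymodules $P, P'$ the morphism complex $\Hom_{\cmd}(P, P')$, equipped with the subspaces $\mcf^{(\mathbf{i},\mathbf{j})}\Hom_{\cmd}(P,P')$, satisfies the axioms of a $\Z^{r+s}$-lattice filtered complex, and that composition and the differential are compatible with these filtrations in the sense required by the tensor product and internal hom of $\colf$. Concretely, I need to verify three things: (i) each $\mcf^{(\mathbf{i},\mathbf{j})}\Hom_{\cmd}(P,P')$ is a subcomplex, i.e. is preserved by the differential $d$ of equation \ref{eq:polymor}; (ii) the subspaces are partially ordered by inclusion compatibly with the product order on $\Z^{r+s}$, i.e. $(\mathbf{i},\mathbf{j}) \leq (\mathbf{i}',\mathbf{j}')$ implies $\mcf^{(\mathbf{i},\mathbf{j})}\Hom \supseteq \mcf^{(\mathbf{i}',\mathbf{j}')}\Hom$ (or the appropriate direction, matching the sign conventions in the definition of $\inthom_{-\alpha,\beta}$); and (iii) composition is filtered, i.e. $\mcf^{(\mathbf{i},\mathbf{j})} \circ \mcf^{(\mathbf{i}',\mathbf{j}')} \subseteq \mcf^{(\mathbf{i}+\mathbf{i}',\mathbf{j}+\mathbf{j}')}$, so that the composition maps $\Hom(P',P'')\otimes\Hom(P,P')\to\Hom(P,P'')$ land in the correct filtered piece. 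Part (ii) is essentially immediate from the definition $\mcf^{(\mathbf{i},\mathbf{j})}\Hom = \{\phi : \phi|_{\cbr_{-(\mathbf{i},\mathbf{j})}P}=0\}$ together with the inclusions $\cbr_\gamma P \subseteq \cbr_{\gamma'}P$ for $\gamma \leq \gamma'$ coming from the length filtrations on the bar constructions $\acbr A_i$, $\acbr B_j$.

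For part (iii), the key observation is that the composition $\psi\phi = \psi \circ (\one \otimes \cdots \otimes \phi \otimes \cdots \otimes \one) \circ \Delta_P$ uses the polymodule comultiplication $\Delta_P$, which splits each bar factor $\acbr A_i$ (resp. $\acbr B_j$) via $\Delta$ into two pieces whose lengths add. Thus if $\phi$ kills everything of total length $\leq -(\mathbf{i}',\mathbf{j}')$ in the "inner" copy of $\cbr P$ and $\psi$ kills everything of length $\leq -(\mathbf{i},\mathbf{j})$ in the "outer" copy, then on $\cbr_{-(\mathbf{i}+\mathbf{i}',\mathbf{j}+\mathbf{j}')}P$, after applying $\Delta_P$, at least one of the two tensor sub-blocks in each coalgebra direction has length small enough to be annihilated — a standard pigeonhole on the coproduct of the tensor coalgebra. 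The same bookkeeping, applied to the three terms of $d\phi$ in \ref{eq:polymor}, handles part (i): the terms involving $\mu_P$ and $\mu_{P'}$ both factor through $\Delta_P$ followed by an application of the structure map, which again only increases length in the inner copy, so $d$ preserves $\mcf^{(\mathbf{i},\mathbf{j})}$; the term $\phi \circ d'_P$ is handled because $d'_P = \sum (\text{tensored bar differentials})$ and each $b_{A_i}$, $b_{B_j}$ is length-nonincreasing on the respective tensor coalgebra.

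Having checked these, I would then assemble the conclusion: the data $\left(\Hom_{\cmd}(P,P'), \{\mcf^{(\mathbf{i},\mathbf{j})}\Hom_{\cmd}(P,P')\}_{(\mathbf{i},\mathbf{j})\in\Z^{r+s}}\right)$ is an object of $\colf$, and the composition and identity morphisms are morphisms in $\colf$ with respect to the lattice filtered tensor product defined earlier, so that $\moduledata\mhyphen\cmd$ is by definition enriched over $\Z^{r+s}$-lattice filtered complexes. I expect the main obstacle to be the careful sign- and index-bookkeeping in part (iii)/(i): one must match the convention in the definition of $\inthom_{-\alpha,\beta}$ (the minus sign on the source index) against the convention $\mcf^{(\mathbf{i},\mathbf{j})}\Hom = \{\phi : \phi|_{\cbr_{-(\mathbf{i},\mathbf{j})}P}=0\}$, and verify that the length grading genuinely adds across the coproduct $\Delta$ in all $r+s$ directions simultaneously (since $\Delta_P$ is a composite of the individual $\Delta_{i,P}$ and $\Delta_{P,j}$, one should check the lengths in distinct coalgebra directions do not interfere, which follows from naturality of $\gamma$ as already noted in the excerpt). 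Once the indexing is pinned down, each verification reduces to the elementary fact that on $T^c V$ the coproduct sends the length-$n$ piece into $\bigoplus_{a+b=n}(T^cV)^{a,\bullet}\otimes(T^cV)^{b,\bullet}$, which is exactly the input needed.
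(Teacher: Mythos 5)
Your items (i) and (ii) are fine: each $\mcf^{(\mathbf{i},\mathbf{j})}$ is indeed a subcomplex (each of the three terms of $d\phi$ in \ref{eq:polymor} either factors through $\Delta_P$ followed by a structure map, which never increases any of the $r+s$ lengths, or through $d^\prime_P$, which is length-nonincreasing), and monotonicity is immediate. The gap is in item (iii), which is the heart of the enrichment claim. Your pigeonhole gives, \emph{for each coalgebra direction separately}, that either the inner or the outer block is short in that direction; but the vanishing condition defining $\mcf^{(\mathbf{i},\mathbf{j})}$ is a conjunction over \emph{all} directions (vanishing on a box $\cbr_{\gamma}P$, i.e.\ when every length is simultaneously small), so different directions may make different choices and then neither $\phi$ nor $\psi$ annihilates its block. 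Concretely, already for bimodules over $(A|B)$, write $\phi^{k,l}$ for the component of $\phi$ on $(\susp A)^{\otimes k}\otimes P\otimes(\susp B)^{\otimes l}$. Take $\psi$ vanishing exactly on the components with $k=0$, $l\leq 5$, and $\phi$ vanishing exactly on the components with $k\leq 5$, $l=0$; both lie deep in the lattice filtration, yet the composite contains the term $\psi^{1,0}\circ(\one\otimes\phi^{0,1}\otimes\one)\circ\Delta_P$ in bidegree $(1,1)$, which need not vanish although $(1,1)$ lies well inside the ``sum'' box. So the inclusion $\mcf^{(\mathbf{i},\mathbf{j})}\circ\mcf^{(\mathbf{i}^\prime,\mathbf{j}^\prime)}\subseteq\mcf^{(\mathbf{i}+\mathbf{i}^\prime,\mathbf{j}+\mathbf{j}^\prime)}$ you assert is false in general once $r+s\geq 2$; lengths do ``interfere'' across directions, and naturality of $\gamma$ does not repair this (it only guarantees that the splitting in distinct directions can be performed independently, not that the annihilation conditions can be combined).

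This is exactly the point the paper avoids by not unwinding the composition at all: its proof is the one-line observation that under the embedding $\cbr:\moduledata\mhyphen\cmd\to(\moduledata\mhyphen\ccm)^{lf}$ composition becomes literal composition of comodule morphisms, and the compatibility with the primitive (length) filtrations — in the totalized sense stated right after the proposition, ``morphism composition respects the total filtration on the tensor product'' — is then read off from the definition of morphisms in $\ccm$ and of the enrichment conventions for $\inthom_{-\alpha,\beta}$. Your direct route is salvageable in the one-direction case $r+s=1$ (which is what Theorem \ref{thm:main1} and the ghost-ideal corollaries actually use: there the box condition is a single length condition and your pigeonhole is correct), but for general $\moduledata$ you must either work with the per-direction (slab) filtrations, where additivity does hold direction by direction, or pass to the totalized filtration and redo the bookkeeping there; the componentwise lattice additivity you propose is not the right statement and cannot be proved.
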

In other words, morphism composition respects the total filtration on the tensor product. As stated above, this follows immediately from the definition of comodule morphism in the category $\ccm$. One should make certain not to confuse this enrichment with the notion that the objects of $\moduledata \mhyphen\cmd$ are lattice filtered, as this only occurs if we resolve the polymodules. 

\subsection{Filtered constructions}

In this section we define tensor products and inner homs of polymodules. To do this effectively, it is helpful to have a picture in mind as well as the appropriate notation associated to this picture.  We will say $\labs = (S^+ , S^- , \kappa)$ is a labelled set if $S^+$ and $S^-$ are finite sets and $\kappa$ is a function from $S^+ \sqcup S^-$ to the objects of $Alg_\infty$. We will write $A \in \labs$ (or $A \in \labs^\pm$) if there is $s \in S^+ \sqcup S^-$ (or $s \in S^\pm$) such that $\kappa (s) = A$. Given a labelled set $\labs = (S^+ , S^- , \kappa )$, we write $\labs^*$ for the labelled set $(S^- , S^+, \kappa)$. We take $\mcl$ to be the category of labelled sets with morphisms that are injective maps respecting the labelling. Note that $\mcl$ is closed under finite direct limits.

Given a labelled set $\labs = (\{t_1^+, \ldots, t_r^+ \}, \{t_1^-, \ldots, t_s^-\} , \kappa)$ we take $\labs \mhyphen \cmd$ to denote the category of $(\kappa (t_1^+), \ldots, \kappa (t_r^+)| \kappa (t_1^-)  , \ldots, \kappa ( t_s^-))$ polymodules. We abbreviate the differential coalgebra
\begin{equation*} \acbr [\kappa (t_1^+ )] \otimes \cdots \otimes \acbr [\kappa (t_r^+ )] \otimes \acbr [\kappa (t_1^- )] \otimes \cdots \otimes \acbr [\kappa (t_s^- )] \end{equation*}
by $\acbr A_\labs$. Any morphism $i: \labs_1 \to \labs_2$ induces a forgetful functor $i^* : \labs_2\mhyphen \cmd \to \labs_1\mhyphen \cmd$.

By gluing data $\glue = (\labs_0, \labs_1, \labs_2, i_1, i_2, j_1, j_2)$, we mean a pushout diagram as below in $\mcl$
\begin{figure}
\begin{tikzcd} \labs_0 \arrow{r}{i_1} \arrow{d}{i_2} & \labs_1^* \arrow{d}{j_1} \\ \labs_2 \arrow{r}{j_2} & \labs_1^* \sqcup_{\labs_0} \labs_2 \end{tikzcd}
\end{figure}
and we abbreviate $\labs_1 \sharp_{\labs_0} \labs_2$ for the labelled set $[\labs_1 - i_1 (\labs_0^*)]\sqcup [ \labs_2 - i_2(\labs_0)]$.

Given gluing data $\glue = (\labs_0, \labs_1, \labs_2, i_1, i_2, j_1, j_2)$, we define the tensor product as a functor
 \begin{equation*} \_ \ts{\labs_0} \_ : \labs_1 \mhyphen \cmd \times \labs_2 \mhyphen \cmd \to (\labs_1 \sharp_{\labs_0} \labs_2 \mhyphen \cmd )^{lf}. \end{equation*} 
As usual, this product is given by first passing through the bar construction,  applying the cotensor product and then recognizing the result as the bar construction of a polymodule. The details of this are now given.

Let $P_1, P_2$ be $\labs_1, \labs_2$ polymodules respectively. Then we let
\begin{equation*} P_1 \ts{\labs_0} P_2 = P_1 \otimes \acbr A_{\labs_0} \otimes P_2 . \end{equation*}
To simplify the definition of the structure map, we write $\Delta_1 = \Delta_{i_1^* (P_1)}$ and $\Delta_2 = \Delta_{i_2^* (P_2)}$ as partial comultiplications. These are the comultiplications obtained when considering $\cbr P_1$ and $\cbr P_2$ as comodules over $\acbr A_{\labs_0}$.  Then we see that there is an isomorphism of graded vector spaces:
\begin{equation*} \alpha : \cbr^{\labs_1 \sharp_{\labs_0} \labs_2}  (P_1 \ts{\labs_0} P_2 ) \to \cbr^{\labs_1} P_1 \square_{\acbr A_{\labs_0}} \cbr^{\labs_2} P_2 \end{equation*}
where $\square_{\acbr A_{\labs_0}}$ is the cotensor product (see, e.g. \cite{EM} ). Recall that this is the kernel of
\begin{equation*} \Delta_1 \otimes 1 - 1 \otimes \Delta_2 : \cbr^{\labs_1} P_1 \otimes \cbr^{\labs_2} P_2 \to \cbr^{\labs_1} P_1 \otimes  \acbr A_{\labs_0} \otimes \cbr^{\labs_2} P_2 . \end{equation*}
Restricting $\alpha$ to $P_1 \ts{\labs_0} P_2$, it is defined as $\alpha (p_1 \otimes a \otimes p_2) = p_1 \otimes \Delta_{\acbr A_{\labs_0}} (a) \otimes p_2$ where, as always, we implicitly use the symmetric monoidal map $\gamma$. It is extended to the bar construction by tensoring with the remaining coalgebras. Utilizing $\alpha$, one pulls back the differential from the cotensor product to obtain a differential $d$ on $\cbr^{\labs_1 \sharp_{\labs_0} \labs_2} (P_1 \ts{\labs_0} P_2)$. As this differential is a square zero comodule coderivation, it is induced by its composition with the projection 
\begin{equation*} \pi: \cbr^{\labs_1 \sharp_{\labs_0} \labs_2} (P_1 \ts{\labs_0} P_2) \to P_1 \ts{\labs_0} P_2 \end{equation*}
and one obtains the $A_\infty$-module map $\mu_{P_1 \ts{\labs_0} P_2} = \pi \circ d$.

Given morphisms $\phi_i : P_i \to P_i^\prime$ in $\labs_i$, we have that the cotensor product of the bar constructions 
\begin{equation*} b_{\phi_1} \square_{\acbr A_{\labs_0} } b_{\phi_2} : \cbr^{\labs_1 \sharp_{\labs_0} \labs_2} (P_1 \ts{\labs_0} P_2 ) \to \cbr^{\labs_1 \sharp_{\labs_0} \labs_2} (P_1^\prime \ts{\labs_0} P_2^\prime ) \end{equation*}
yields a natural map $\phi_1 \ts{\labs_0} \phi_2$ in $\labs_1 \sharp_{\labs_0} \labs_2 \mhyphen \cmd$. When considering $P_1 \ts{\labs_0} \_$ as a functor, we take $\phi_2$ to $1_{P_1} \ts{\labs_0} \phi_2$. Note that it follows from the definitions above and that of the cone that $P_1 \ts{\labs_0} \_$ is an exact functor. 

 Since the coalgebra $\acbr A_{\labs_0}$ is $\Z^{|\labs_0|}$-filtered by the primitives of $\acbr A$ for $A \in \labs_0$, we have that $P_1 \ts{\labs_0} P_2$ is lattice filtered by $\Z^{|\labs_0|}$. We will preserve this filtration in the definition and write
\begin{equation*} P_1 \tns{\gamma}_{\labs_0} P_2 = P_1 \otimes \left( \otimes_{A \in \labs_0^+} \acbr_{k_i}  A^{op} \right) \otimes \left( \otimes_{B \in \labs_0^-} \acbr_{l_j} B \right) \otimes P_2
\end{equation*}
where $\gamma = (k_1, \ldots, k_a, l_1, \ldots, l_b) \in \Z^{|\labs_0|}$.  Thus, we have obtained the above mentioned \dg functor.

It will be useful to have notation for filtered quotients in this setting. For this, we write
\begin{equation*} P_1 \odot^{\gamma}_{\labs_0} P_2 := \frac{P_1 \ts{\labs_0} P_2}{P_1 \tns{\gamma}_{\labs_0} P_2}. \end{equation*}

As expected, the tensor product of a given polymodule with the diagonal polymodule  yields a quasi-equivalent polymodule. However, the filtration is added structure which will be exploited later in the paper. For now, we simply define the natural quasi-equivalence and its inverse. Fix a labelled set $\labs = (S^+ , S^- , \kappa )$, let $2\labs = \labs^* \sqcup \labs$ and $\glue = (\labs, 2 \labs, \labs, i_1, i_2, j_1, j_2)$ the natural gluing data. We take $\Diagonal_\labs$ to be the diagonal $2\labs$ polymodule
\begin{equation*} \otimes_{t \in S^+ \cup S^-} \kappa (t) . \end{equation*}
The structure maps for $\Diagonal_\labs$ are simply the tensor products of the $A_\infty$ algebra maps composed with the shift for the various labelling algebras. Then we define the natural equivalences
\begin{equation} \label{eq:units} \xi_P : \Diagonal_\labs \ts{\labs} P \to P \hspace{.3in} \epsilon_P : P \to \Diagonal_\labs \ts{\labs} P .\end{equation}
Here $\xi_P$ is defined as the map induced by tensor multiplication $\mult $, the shift $\sigma$ and the polymodule multiplication map $\mu_P$,
\begin{equation*} \xi_P = \mu_P \circ (\mult \otimes 1_P) \circ (1_{\acbr A_{\labs}} \otimes \sigma \otimes 1_{\acbr A_{\labs}} \otimes 1_P). \end{equation*}
Letting $\unit_{\acbr A_{\labs}} : \K \to \acbr A_{\labs}$ send $1$ to $e_{\labs} = \otimes_{t \in S^+ \cup S^-} e_{\acbr [\kappa (s)]}$, we take
\begin{equation*}\epsilon_P = \sigma_{\otimes}^{-1} (e_\labs ) \otimes 1_{\acbr A_{\labs}} \otimes 1_P. \end{equation*}
Using the unital conditions, it is easy to verify that $\xi_P$ and $\epsilon_P$ are quasi-inverse maps. As a consequence, we obtain the following basic lemma which instructive as to the bar construction of a module.

\begin{lem} Suppose $A$ is an $A_\infty$ algebra and denote $A$ regarded as a right module over itself as $A^r$. Let $P$ be a left $A$ module, then the vector space $A^r \ts{} P$ is naturally quasi-isomorphic to $H^* (P )$. \end{lem}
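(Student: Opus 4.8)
The plan is to unwind the definition of $A^r \ts{} P$ as $A^r \otimes \acbr A \otimes P$ (taking $\labs_0$ to be the single-algebra labelled set and noting $\acbr A_{\labs_0} = \acbr A$) and identify its underlying complex with the two-sided bar resolution of $P$, which is the standard object computing $\mathrm{Tor}^A_*(A, P) = H^*(P)$ up to the usual acyclicity. More precisely, $A^r$ as a right module over itself is the trivial polymodule $\Unit$, so $A^r \ts{} P$ has underlying graded space $A \otimes \acbr A \otimes P = A \otimes T^c(\susp A) \otimes P$, and by the construction of $\mu_{P_1 \ts{\labs_0} P_2}$ (pull back the cotensor differential along $\alpha$ and postcompose with $\pi$) the $A_\infty$ module structure map is the one coming from the bar differential on $\acbr A$ together with the right action of $A$ on $A^r$ and the left action on $P$. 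This is exactly the classical (normalized) bar complex $\mathrm{Bar}(A, A, P)$ in the $A_\infty$ setting.

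The key steps, in order, would be: (1) Apply the unit equivalences from \eqref{eq:units}: by the lemma's preceding discussion, $\xi_P$ and $\epsilon_P$ exhibit a quasi-equivalence $\Diagonal_\labs \ts{\labs} P \simeq P$ in the diagonal case; specialize/reduce to see that $A^r \ts{} P$ plays the role of such a tensor product with the diagonal, so that at the level of $A_\infty$ modules $A^r \ts{} P$ is quasi-isomorphic to $P$. (2) Pass to the minimal model: by the proposition on minimal models, $P$ is quasi-isomorphic as an $A_\infty$ module to a module on $H^*(P)$ with vanishing $\mu^1$; since the vector-space statement only concerns the cohomology of the underlying complex, $H^*(A^r \ts{} P) \cong H^*(P)$ as claimed. (3) Alternatively, and more self-containedly, observe that $A \otimes \acbr A$ with the bar differential is a resolution of $A$ as a right $A$-module (for $A$ an ordinary algebra this is the remark that $\abr A$ is the augmented bar resolution, hence acyclic; in the $A_\infty$ case this is the statement that the counit $\acbr A \otimes A \to A$, equivalently the bar resolution, is a quasi-isomorphism), so tensoring with $P$ and taking cohomology yields $H^*(P)$ since $P$ is built from projectives / the resolution is by free right modules. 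I would write the argument via route (1)–(2) as it uses only results already in the excerpt.

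The main obstacle I anticipate is bookkeeping the identification between the abstractly-defined $\ts{}$ (which is stated to land in lattice-filtered modules, via $\alpha$ and the cotensor product) and the concrete bar complex $A \otimes \acbr A \otimes P$, together with checking that the unit maps $\xi_P, \epsilon_P$ — which are defined for the diagonal polymodule $\Diagonal_\labs$ over $2\labs$ — genuinely apply here, where one tensor factor is $A^r = \Unit$ rather than the diagonal. This amounts to recognizing $A^r \ts{A} P$ as (a relabelling of) $\Diagonal_A \ts{A} P$ under the equivalence of a left $A$-module with a $(A|\K)$-polymodule and $A^r$ with the diagonal restricted appropriately; once that identification is in place, the unital conditions make $\xi_P$ a quasi-isomorphism essentially formally, and the rest is the already-quoted minimal model statement. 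The sign conventions introduced via $\sigma$, $\susp_\otimes$ will need care but are routine.
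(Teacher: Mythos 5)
Your route (1)--(2) is exactly the paper's argument: the paper takes the labelled set $\labs$ with $S^+ = \emptyset$, $S^- = \{t\}$, $\kappa(t) = A$, observes that $A^r \ts{} P$ equals $\Diagonal_{2\labs} \ts{\labs} P$ as a complex (the identification you flag as the main bookkeeping point), and then invokes the quasi-inverse unit maps $\xi_P, \epsilon_P$ together with the minimal model of $P$ to conclude $H^*(P)$. The proposal is correct and takes essentially the same approach.
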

\begin{proof} Let $\labs = (S^+ , S^- , \kappa)$ be the labelled set with $S^+ = \emptyset$, $S^- = \{t\}$ and $\kappa (t) = A$. Then the lemma follows from the fact that $A^r \ts{} P$ equals $\Diagonal_{2\labs} \ts{\labs} P$ as a complex. The latter is quasi-isomorphic to $P$ which has minimal model $H^* (P)$. \end{proof}

Combining this lemma with earlier remarks, we obtain the following important fact.

\begin{prop} Let $\glue = (\labs_0, \labs_1, \labs_2, i_1, i_2, j_1, j_2)$ be gluing data such that the algebras labelled by $\labs_0$ are compact. If $P_i$ are perfect $\labs_i$ polymodules then $P_1 \ts{\labs_0} P_2$ is a perfect $\labs_1 \sharp_{\labs_0} \labs_2$ polymodule.
\end{prop}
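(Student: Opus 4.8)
The plan is to reduce, using exactness and additivity of $\ts{\labs_0}$, to the case where the $P_i$ are free of rank one, and then to compute that tensor product explicitly with the help of the preceding Lemma; the compactness hypothesis will enter only at the very end, as a finiteness statement.

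First I would record the formal properties of $\ts{\labs_0}$ that make the reduction work: it is a \dg bifunctor, it is exact and additive in each variable, and it sends quasi-isomorphisms between perfect polymodules to quasi-isomorphisms. The last point I would obtain from the fact that a quasi-isomorphism between perfect polymodules is a homotopy equivalence — its cone is an acyclic perfect polymodule, hence contractible — together with the fact that a \dg functor preserves homotopy equivalences. Replacing each $P_i$ by a quasi-isomorphic polymodule, I may then assume $P_i$ is built from finitely many cones of finitely generated projective $\labs_i$-polymodules $R_i^{(k)}$; exactness in each variable exhibits $P_1\ts{\labs_0}P_2$ as built from finitely many cones of the polymodules $R_1^{(k)}\ts{\labs_0}R_2^{(l)}$. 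Since each $R_i^{(k)}$ is a direct summand of some $\Unit_{\labs_i}^{\oplus N_i}$, additivity makes $R_1^{(k)}\ts{\labs_0}R_2^{(l)}$ a direct summand of $\bigl(\Unit_{\labs_1}\ts{\labs_0}\Unit_{\labs_2}\bigr)^{\oplus N_1N_2}$. As perfect polymodules are closed under cones, shifts, finite sums, and direct summands, it therefore suffices to prove that $\Unit_{\labs_1}\ts{\labs_0}\Unit_{\labs_2}$ is perfect.

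For this I would split the labels, $\labs_1 = [\labs_1 - i_1(\labs_0^*)]\sqcup i_1(\labs_0^*)$ and $\labs_2 = i_2(\labs_0)\sqcup[\labs_2 - i_2(\labs_0)]$. Because the structure map of a free polymodule is assembled from the algebra structure maps one factor at a time, $\Unit_{\labs_i}$ decomposes as the external tensor product of the free polymodules on these two pieces, and since $\ts{\labs_0}$ contracts only the $\labs_0$-labels it commutes with these decompositions; hence
\begin{equation*} \Unit_{\labs_1}\ts{\labs_0}\Unit_{\labs_2} \;\cong\; \Unit_{\labs_1 - i_1(\labs_0^*)}\otimes\bigl(\Unit_{i_1(\labs_0^*)}\ts{\labs_0}\Unit_{i_2(\labs_0)}\bigr)\otimes\Unit_{\labs_2 - i_2(\labs_0)}, \end{equation*}
where the middle factor carries no remaining algebra labels. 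For each $A\in\labs_0$ the corresponding contraction inside the middle factor is the tensor product $A^r\ts{}A$ of $A$ (as a right $A$-module) with $A$ (as a left $A$-module), which by the preceding Lemma is quasi-isomorphic to $H^*(A)$; running over all $A\in\labs_0$ and applying the K\"unneth formula over $\K$ gives
\begin{equation*} \Unit_{i_1(\labs_0^*)}\ts{\labs_0}\Unit_{i_2(\labs_0)} \;\simeq\; \bigotimes_{A\in\labs_0}H^*(A) \;=\; H^*\bigl(\Diagonal_{\labs_0}\bigr). \end{equation*}
Feeding this back in, and using once more that $\ts{\labs_0}$ respects quasi-isomorphisms, yields $\Unit_{\labs_1}\ts{\labs_0}\Unit_{\labs_2}\simeq\Unit_{\labs_1\sharp_{\labs_0}\labs_2}\otimes H^*(\Diagonal_{\labs_0})$.

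Finally, the compactness hypothesis says each $H^*(A)$ with $A\in\labs_0$ is finite dimensional, so $H^*(\Diagonal_{\labs_0})$ is a finite dimensional graded vector space; choosing a homogeneous basis identifies $\Unit_{\labs_1\sharp_{\labs_0}\labs_2}\otimes H^*(\Diagonal_{\labs_0})$ with a finite direct sum of shifts of the free polymodule $\Unit_{\labs_1\sharp_{\labs_0}\labs_2}$, which is perfect. Hence $\Unit_{\labs_1}\ts{\labs_0}\Unit_{\labs_2}$, and therefore $P_1\ts{\labs_0}P_2$, is a perfect $\labs_1\sharp_{\labs_0}\labs_2$-polymodule. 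The step I expect to require the most care is the third paragraph: justifying that $\ts{\labs_0}$ commutes with the external tensor product along disjoint labels, and keeping the variance conventions straight so that the preceding Lemma genuinely applies to each $A\in\labs_0$. The reduction of the second paragraph and the dimension count of the last are then routine.
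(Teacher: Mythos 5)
Your proposal is correct and follows essentially the same route as the paper: reduce by exactness and additivity to the free polymodules, apply the preceding lemma to identify $\Unit_{\labs_1}\ts{\labs_0}\Unit_{\labs_2}$ up to quasi-isomorphism with $\Unit_{\labs_1\sharp_{\labs_0}\labs_2}$ tensored with the cohomologies of the $\labs_0$-algebras, and invoke compactness to conclude this is a finitely generated projective, hence perfect. You merely spell out the reduction and the K\"unneth factorization that the paper leaves implicit (and your $\bigotimes_{A\in\labs_0}H^*(A)$ is the intended form of the factor the paper writes as a direct sum).
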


\begin{proof} The previous lemma implies that there is a quasi-isomorphism
\begin{equation*} \phi: \Unit_{\labs_1} \ts{\labs_0} \Unit_{\labs_2} \stackrel{q.i.}{\longrightarrow} \Unit_{\labs_1 \sharp_{\labs_0} \labs_2} \otimes \left( \oplus_{A \in \labs_0} H^* (A )  \right) . \end{equation*}
 By the compactness assumption, this implies that tensor products of finitely generated projective polymodules are finitely generated projective polymodules. Together with the definition of perfect modules and the fact that tensor product $\_ \ts{\labs_0} \_$ is exact, we have the result.
\end{proof}

To define the internal Hom, we again follow the approach for the tensor product and pass to coalgebras and comodules. There is an additional notion needed here from classical homotopy theory, that of a twisting cochain which we recall here. If $C$ is a \dg coalgebra and $A$ a \dg algebra, a map $\rho : C \to A$ is called a twisting cochain if $\partial \rho + \rho \cdot \rho = 0$ where $\partial \rho = d_A \rho - (-1)^{|\rho |} \rho d_C$ and $\rho \cdot \rho := \mult \circ \rho \otimes \rho \circ \Delta_C$ where $\mult$ is multiplication in $A$.

One of the central features of twisted cochains is that they allow one to define twisted tensor products \cite{Brown, Lefevre}. We take a moment to recall this construction for the case of a left module.

\begin{defn} Given a dg coalgebra $C$, a dg algebra $A$, a dg $C$ bicomodule $M$, a left dg $A$ module $N$ and a twisting cochain $\rho: C \to A$, the twisted tensor product $M \otimes_\rho N$ (or $N \otimes_\rho M$) is defined as the ordinary tensor product of vector spaces with chain map $d_M \otimes 1_N + 1_M \otimes d_N + \rho \cap \_$ where 
\begin{equation*} \rho \cap \_ = (1_M \otimes \mult_N ) \circ (1_M \otimes \rho \otimes 1_N) \circ (\Delta_M \otimes 1_N ). \end{equation*}
The result is a left (or right) $C$ comodule.
\end{defn}

The case of right module and bimodule is analogous.

Now, let $ \labs = \labs^\prime \sqcup \labs^{\prime \prime}$ in $\mcl$ and $i: \labs^\prime \to \labs$, $j: \labs^{\prime \prime} \to \labs$   the inclusion maps. Given a $\labs$ polymodule $P$, we define a map
\begin{equation*} \rho_{j} : \acbr A_{\labs^\prime} \to \Hom_{\labs^{\prime \prime} \mhyphen\cmd} (j^*(P), j^*(P)) \end{equation*}
as $[\rho_{j} (\mathbf{c})] (\mathbf{a} \otimes p \otimes \mathbf{b} ) = \mu_P (\mathbf{c} \otimes \mathbf{a} \otimes p \otimes \mathbf{b} )$ where $\mathbf{a} \otimes p \otimes \mathbf{b} \in \cbr^{\labs^{\prime \prime}} P$. It follows from equations \ref{eq:polymod} and \ref{eq:polymor} that $\rho_{j}$ is a twisting cochain from the \dg coalgebra $ \acbr A_{\labs^\prime}$ to the \dg algebra $\Hom_{\labs^{\prime \prime} \mhyphen \cmd} (j^*(P), j^*(P))$.

Suppose $\glue = (\labs_0, \labs_1, \labs_2, i_1, i_2, j_1, j_2) $ is gluing data and $P_1, P_2$ are $\labs_1^*, \labs_2$ polymodules respectively. Then, as a graded vector space, we define $\inthom_{\labs_0} (P_1, P_2 )$ as $\Hom_{\labs_0 \mhyphen\cmd } (i_1^*(P_1) , i_2^* (P_2) )$. The structure map 
\begin{equation*}\mu_{\inthom_{\labs_0} (P_1, P_2 )} : \cbr^{\labs_1^* \sharp_{\labs_0} \labs_2} \inthom_{\labs_0} (P_1, P_2 ) \to \inthom_{\labs_0} (P_1, P_2 )\end{equation*}
is set to equal the differential on the twisted tensor product composed with the projection $\pi : \cbr^{\labs_1^* \sharp_{\labs_0} \labs_2} \inthom_{\labs_0} (P_1, P_2 ) \to \inthom_{\labs_0} (P_1 , P_2 )$, where the former is induced by the isomorphism
\begin{equation*} \cbr^{\labs_1^* \sharp_{\labs_0} \labs_2} \inthom_{\labs_0} (P_1, P_2 ) = \acbr A_{\labs_2 - i_2 (\labs_0)}  \otimes_{\rho_{i_2}} \inthom_{\labs_0} (P_1 , P_2 ) \otimes_{\rho_{i_1}} \acbr A_{\labs_1 - i_1 (\labs_0)}  \end{equation*}
Again we keep track of the lattice filtration so that $\inthom_{\labs_0} (P_1 , P_2 )$ is a $\Z^{|\labs_0 |}$ filtered polymodule.

As in the case of the tensor product, for any $\labs$ polymodule $P$, the diagonal polymodule $\Diagonal_\labs$ plays the role of a unit for $\inthom_\glue (\Diagonal_\labs , P )$. Again we define the natural transformations
\begin{equation*}  \chi_P : \inthom_\labs ( \Diagonal_\labs , P ) \to P \hspace{.3in} \upsilon_P : P \to \inthom_\labs ( \Diagonal_\labs , P ) . \end{equation*}
Where $\chi_P  (\mathbf{a} \otimes \phi \otimes \mathbf{b}) = (-1)^{|\phi| |\mathbf{a}|} \phi (\mathbf{a} \otimes \unit_{\acbr A_{\labs} (1)} \otimes \mathbf{b}  )$ and $\upsilon_P$ is the strict map sending $p$ to the morphism $\phi_p$ defined as $\phi_p (\mathbf{a} \otimes \mathbf{q} \otimes \mathbf{b} ) = \mu_P (\mathbf{a} \otimes \mathbf{q}^+ \otimes p \otimes \mathbf{q}^- \otimes \mathbf{b})$ where $\mathbf{q}^\pm$ is the tensor factor of $q$ in $\Diagonal_{\labs^\pm}$.

\subsection{Filtered adjunction} 

In this section we observe the classic adjunction between tensor product and internal Hom for polymodules. This leads to elementary, but powerful, observations on dual $A_\infty$-modules. We will be concerned with preserving the lattice filtrations naturally throughout. 

To state the theorem, we need to specify the gluing data between three categories of polymodules. Assume $\labs_i$ are labelled sets for $i = 1, 2, 3$. We say that the data $\gluecyc = (\glue_{12}, \glue_{23}, \glue_{31})$ form a gluing cycle if $\glue_{ij}$ are the gluing data
\begin{eqnarray*} \glue_{12} & = & (\labs_{12}, \labs_1, \labs_2 , i_{12}, i^\prime_{12}, j_{12}, j^\prime_{12} ), \\
\glue_{23} & = & (\labs_{23}, \labs_2^*, \labs_3 , i_{23}, i^\prime_{23}, j_{23}, j^\prime_{23} ) ,\\
\glue_{31} & = & (\labs_{31}, \labs_3, \labs_1^* , i_{31}, i^\prime_{31}, j_{31}, j^\prime_{31} ), \end{eqnarray*}
and $image (i_{kl} )$ is disjoint from $image (i_{mk}^\prime )$. A gluing cycle can be represented graphically as a directed graph with three vertices. Vertices $v_1, v_2$ have incoming and outgoing edges $\labs_i^\mp$ and $v_3$ has incoming and outgoing edges $\labs_3^\pm$. Those edges that connect vertices $v_i$ and $v_j$ form the labelled set $\labs_{ij}$. This is depicted in Figure \ref{fig:gluecycle} below.
%
\begin{figure}[h]
\begin{picture}(0,0)%
\includegraphics{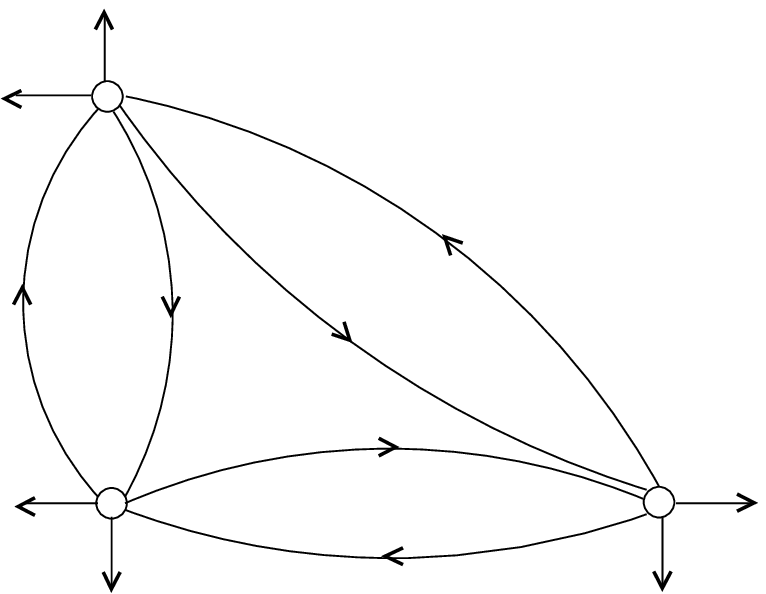}%
\end{picture}%
\setlength{\unitlength}{4144sp}%
\begingroup\makeatletter\ifx\SetFigFont\undefined%
\gdef\SetFigFont#1#2#3#4#5{%
  \reset@font\fontsize{#1}{#2pt}%
  \fontfamily{#3}\fontseries{#4}\fontshape{#5}%
  \selectfont}%
\fi\endgroup%
\begin{picture}(3473,2697)(2988,-3355)
\put(4719,-3004){\makebox(0,0)[lb]{\smash{{\SetFigFont{8}{9.6}{\rmdefault}{\mddefault}{\updefault}{ $\vdots$}%
}}}}
\put(3095,-952){\makebox(0,0)[lb]{\smash{{\SetFigFont{8}{9.6}{\rmdefault}{\mddefault}{\updefault}{ $\labs_1^*$}%
}}}}
\put(6081,-3191){\makebox(0,0)[lb]{\smash{{\SetFigFont{8}{9.6}{\rmdefault}{\mddefault}{\updefault}{ $\labs_2^*$}%
}}}}
\put(4864,-2178){\makebox(0,0)[lb]{\smash{{\SetFigFont{8}{9.6}{\rmdefault}{\mddefault}{\updefault}{ $\labs_{12}$}%
}}}}
\put(3126,-3191){\makebox(0,0)[lb]{\smash{{\SetFigFont{8}{9.6}{\rmdefault}{\mddefault}{\updefault}{ $\labs_3$}%
}}}}
\put(3289,-2040){\makebox(0,0)[lb]{\smash{{\SetFigFont{8}{9.6}{\rmdefault}{\mddefault}{\updefault}{ $\cdots$}%
}}}}
\put(3289,-2227){\makebox(0,0)[lb]{\smash{{\SetFigFont{8}{9.6}{\rmdefault}{\mddefault}{\updefault}{ $\labs_{31}$}%
}}}}
\put(4707,-1989){\makebox(0,0)[lb]{\smash{{\SetFigFont{8}{9.6}{\rmdefault}{\mddefault}{\updefault}{ $\cdots$}%
}}}}
\put(4478,-2974){\makebox(0,0)[lb]{\smash{{\SetFigFont{8}{9.6}{\rmdefault}{\mddefault}{\updefault}{ $\labs_{23}$}%
}}}}
\end{picture}%
\caption{\label{fig:gluecycle} The gluing cycle $\gluecyc$}
\end{figure}

We take $\labs_\gluecyc$ to be the labelled set $(\labs_1 \sharp_{\labs_{12}} \labs_2)^* \sharp_{\labs_{23} \sqcup \labs{31}} \labs_3$, i.e. $\labs_\gluecyc$ consists of the half edges in figure \ref{fig:gluecycle}. With this notation, we can prove the following classic adjunction:
\begin{thm} Given a gluing cycle $\gluecyc$ and polymodules $P_i \in \labs_i \mhyphen \cmd$, there is a natural isomorphism $\Phi$ in $(\labs_\gluecyc \mhyphen \cmd )^{lf} $,
\begin{equation}\label{eq:adj2} \Phi : \inthom_{\labs_{31} \sqcup \labs_{23}} (P_1 \ts{\labs_{12}} P_2 , P_3 ) \to \inthom_{\labs_{31} \sqcup \labs_{12}} (P_1 , \inthom_{\labs_{23}} (P_2 , P_3)). \end{equation}
\end{thm}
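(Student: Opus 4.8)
The plan is to follow the same strategy that defined the tensor product and internal Hom: pass everything through the bar construction, where the statement becomes a transparent rearrangement of cotensor and twisted tensor factors, and then recognize the result as a morphism of polymodules that is automatically filtered. Concretely, I would first unwind both sides as graded vector spaces. Using the definition of $\inthom_{\labs_0}(P_1,P_2) = \Hom_{\labs_0\mhyphen\cmd}(i_1^*(P_1),i_2^*(P_2))$ together with the bar description of polymodule morphisms, the left side is, as a graded space,
\begin{equation*} \Hom_{\labs_{31}\sqcup\labs_{23}\mhyphen\cmd}\bigl( j^*(P_1 \ts{\labs_{12}} P_2)\, ,\, j^*(P_3)\bigr), \end{equation*}
i.e.\ maps out of $\cbr^{\labs_{31}\sqcup\labs_{23}}(P_1\ts{\labs_{12}}P_2)$, and since $P_1\ts{\labs_{12}}P_2 = P_1\otimes \acbr A_{\labs_{12}}\otimes P_2$, such a map has as its source a tensor product of bar coalgebras, copies of $P_1$, $P_2$ and $\acbr A_{\labs_{12}}$. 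The right side similarly unwinds to maps whose source is $\cbr^{\labs_1}P_1$ with an $\inthom_{\labs_{23}}(P_2,P_3)$-valued target, which itself is a $\Hom$-space out of $\cbr^{\labs_2}P_2$. The point of the first step is the elementary observation that both sides are canonically identified with the graded vector space of maps $\cbr^{\labs_1}P_1\otimes \acbr A_{\labs_{12}}\otimes \cbr^{\labs_2}P_2 \to P_3$ satisfying the appropriate unitality conditions on the outer labelled sets; this is just currying, i.e.\ the ordinary $\otimes$-$\Hom$ adjunction in $\gr$, combined with the bar identifications. I would define $\Phi$ to be this currying map and note it is clearly an isomorphism of graded vector spaces, and that it is a filtered map for the $\Z^{|\labs_{31}\sqcup\labs_{23}|}$ and $\Z^{|\labs_{31}\sqcup\labs_{12}|}$ lattice filtrations because currying sends the length-$\gamma$ filtered piece on one side to the corresponding piece on the other essentially by definition of $\inthom_{-\alpha,\beta}$ and the lattice tensor product.

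The substantive step is checking that $\Phi$ is a chain map and, more precisely, a morphism in $\labs_\gluecyc\mhyphen\cmd$ — that is, that it intertwines the polymodule structure maps on both sides. Here I would work entirely in the comodule/coalgebra picture: on the left, the differential on $\cbr^{\labs_{31}\sqcup\labs_{23}}\inthom_{\labs_{31}\sqcup\labs_{23}}(P_1\ts{\labs_{12}}P_2,P_3)$ is the twisted-tensor-product differential built from the twisting cochains $\rho_{i_1}$, $\rho_{i_2}$ associated with the polymodule $P_1\ts{\labs_{12}}P_2$, together with the internal differential of $\Hom_{\labs_{31}\sqcup\labs_{23}\mhyphen\cmd}$; the latter in turn is governed by $\mu_{P_1\ts{\labs_{12}}P_2} = \pi\circ d$ coming from the cotensor-product differential over $\acbr A_{\labs_{12}}$. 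On the right, the differential assembles the twisting cochain for $\inthom_{\labs_{23}}(P_2,P_3)$ over $\acbr A_{\labs_{31}}$ and $\acbr A_{\labs_{12}}$, and inside, the differential of $\inthom_{\labs_{23}}(P_2,P_3)$ itself, which is the twisted tensor product over $\acbr A_{\labs_{23}}$ built from $\rho_{i_2}, \rho_{i_2'}$ for $P_2$ and $P_3$. The content of the proof is that these two assembled differentials correspond under currying, which follows from (i) associativity and coassociativity of cotensor and twisted tensor products over the coalgebras $\acbr A_{\labs_{12}}, \acbr A_{\labs_{23}}, \acbr A_{\labs_{31}}$ — the compatibility of the partial comultiplications $\Delta_1,\Delta_2$ — and (ii) the compatibility of the twisting cochains $\rho_{i_k}$ with $\mu_{P_1\ts{\labs_{12}}P_2}$, which is exactly how $\rho_{j}$ was defined from $\mu_P$ via $[\rho_j(\mathbf c)](\mathbf a\otimes p\otimes\mathbf b) = \mu_P(\mathbf c\otimes\mathbf a\otimes p\otimes\mathbf b)$. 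I would phrase this as a diagram chase matching, term by term, the three kinds of contributions (internal differentials of the $P_i$, the $\acbr A_{\labs_{ij}}$ differentials, and the twisting-cochain cap terms) on each side; the Koszul signs are handled uniformly by the $\sigma$, $\susp_\otimes$ conventions fixed in Section \ref{subsub:lf}.

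Finally, I would verify naturality of $\Phi$ in each $P_i$: given morphisms $\phi_i: P_i\to P_i'$ in $\labs_i\mhyphen\cmd$, both the functoriality of $\_\ts{\labs_{12}}\_$ (through $b_{\phi_1}\square b_{\phi_2}$) and of $\inthom$ are implemented by the same currying of $b$-maps, so the naturality square commutes on the nose. The unitality conditions distinguishing $\cmd$ from $\nucmd$ on the outer labels $\labs_\gluecyc$ transport across $\Phi$ because currying converts the outer $\one_{\acbr A}$ and $\unit_{\acbr A}$ contraction conditions on one side into the matching conditions on the other. I expect the main obstacle to be purely organizational rather than conceptual: keeping the bookkeeping of which coalgebra each bar factor belongs to under control — in particular distinguishing the ``glued'' labels $\labs_{12},\labs_{23},\labs_{31}$ over which one cotensors/twists from the ``free'' half-edge labels in $\labs_\gluecyc$ that remain as outer comodule structure — and confirming that the three twisting cochains interact associatively when one of them ($\rho$ over $\acbr A_{\labs_{12}}$) is itself ``inside'' an $\inthom$. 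The cleanest way to manage this is to prove a single associativity lemma for iterated twisted/cotensor products over a tuple of coalgebras with a compatible system of twisting cochains, and then deduce the theorem by specializing; but even done by hand, no step requires more than the definitions recalled above plus the coassociativity of the comultiplications $\Delta_{i,P}$ and $\Delta_{P,j}$.
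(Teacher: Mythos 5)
Your proposal is correct and follows essentially the same route as the paper: both sides are unwound through the bar description of polymodule morphisms to maps out of $\acbr A_{\labs_{31}}\otimes P_1\otimes\acbr A_{\labs_{12}}\otimes P_2\otimes\acbr A_{\labs_{23}}$, and $\Phi$ is the currying isomorphism of the closed category $\gr^{lf}$, which visibly preserves the lattice filtrations. The paper dispatches the compatibility with the differentials as ``immediate from the definitions,'' whereas you sketch that check explicitly in the twisted/cotensor comodule picture; this is a faithful elaboration of the same argument rather than a different proof.
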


\begin{proof} This is simply an exercise in the definitions of the last section and the observation that $\gr^{lf}$ is a closed category. Letting $\star =  \inthom_{\labs_{31} \sqcup \labs_{23}} (P_1 \ts{\labs_{12}} P_2 , P_3 ) $ we have the following natural isomorphisms of $\Z^{|\labs_{31}| + |\labs_{12} | + |\labs_{23}|}$ filtered graded vector spaces

\begin{eqnarray*}\star & = & \Hom_{(\labs_{31} \sqcup \labs_{23}) \mhyphen \cmd} (P_1 \ts{\labs_{12}} P_2 , P_3 ) \\ & = &  \Hom_\gr (\acbr A_{\labs_{31}}  \otimes 
P_1 \otimes \acbr A_{\labs_{12}} \otimes P_2 \otimes \acbr A_{\labs_{23}} , P_3 ) \\ & \simeq & \Hom_\gr ( \acbr A_{\labs_{31}} \otimes P_1 \otimes \acbr A_{\labs_{12}}  , \Hom_\gr ( P_2 \otimes \acbr A_{\labs_{23}} , P_3) ) \\ & = & \inthom_{\labs_{31} \sqcup \labs_{12}} (P_1 , \inthom_{\labs_{23}} (P_2 , P_3)) \end{eqnarray*}

To complete the proof, one must show that the isomorphisms above respect the differentials, which follows immediately from the definitions.
\end{proof}

The same proof gives a natural equivalence

\begin{equation*}  \Phi^l : \inthom_{\labs_{31} \sqcup \labs_{23}} (P_1 \ts{\labs_{12}} P_2 , P_3 ) \to \inthom_{\labs_{32} \sqcup \labs_{23}} (P_2 , \inthom_{\labs_{13}} (P_1 , P_3)),\end{equation*}
making the bicategory of $A_\infty$-algebras and bimodules into a biclosed bicategory. We apply this theorem to a simple gluing cycle to obtain the following corollary.
\begin{cor} Suppose $A \in Alg_\infty$ and $P \in A \mhyphen \cmd$. Then:
\begin{equation} \ell^{\_ \ts{A} P} \leq \ell^{\inthom_{A} (P , \_ )} \end{equation} \end{cor}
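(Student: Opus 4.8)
The plan is to reduce the corollary to the adjunction isomorphism $\Phi$ of the preceding theorem, using it to compare lengths of the two internal Hom constructions applied to the Yoneda functors. The point is that length is measured after taking cohomology of $Tot$ of a lattice-filtered complex, and a filtered isomorphism of complexes induces a filtered isomorphism on cohomology, hence preserves all the quantities $\ell_+, \ell_-, \ell$.

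First I would set up the simplest gluing cycle that realizes the two functors in question. Take $\labs_1 = \labs_2 = \labs_3$ each the singleton labelled set with label $A$, arranged so that $\labs_{12} = \labs$ (the gluing set labelled by $A$), while $\labs_{23}$ and $\labs_{31}$ are empty. With this choice $P_1 \ts{\labs_{12}} P_2$ becomes $\_ \ts{A} P$ applied to an arbitrary left $A$-module in the first slot, $\inthom_{\labs_{23}}(P_2, P_3)$ collapses to $\inthom$ with empty gluing, i.e. essentially $P_3$ itself as a right module, and $\inthom_{\labs_{31} \sqcup \labs_{23}}(\_, P_3)$ becomes $\inthom_A(\_, \_)$. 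Concretely, for a fixed test object $E$ (a left $A$-module), the Yoneda complex $\yoneda{\_ \ts{A} P}{E}$ and the complex $\yoneda{\inthom_A(P, \_)}{E}$ are matched by $\Phi$ as $\Z^{|\labs_{12}|} = \Z$-filtered complexes. I would verify this dictionary carefully since the bookkeeping of which labelled set plays which role is where errors creep in; the theorem's hypothesis that the relevant images are disjoint is automatically satisfied because $\labs_{23}, \labs_{31}$ are empty.

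Given the filtered isomorphism $\Phi$, the next step is purely formal: a degree-zero isomorphism of $\Z^k$-lattice filtered complexes commutes with $Tot$ and induces an isomorphism of filtered graded vector spaces on $H^*$, hence equal values of $\ell_+$, $\ell_-$ and $\ell$. Taking suprema over all test objects $E \in A\mhyphen\cmd$ in the definitions of $\ell^F$ then gives $\ell^{\_ \ts{A} P} = \ell^{\inthom_A(P,\_)}$, which is stronger than the stated inequality. I would then remark that the corollary is phrased as an inequality only because in the intended applications one does not always have the diagonal/unit identifications needed to see the reverse direction as cleanly, or because one side is computed via a resolution where the filtration is only a quotient of the other; stating $\leq$ suffices for the dimension-theoretic applications in the next section.

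The main obstacle is not the formal length argument but making the reduction honest: one must check that $\_ \ts{A} P$ and $\inthom_A(P, \_)$, as \dg functors $A\mhyphen\cmd \to \cof$, really do arise from the gluing cycle above composed with the Yoneda embedding, and in particular that the $\Z$-lattice filtration coming from $\acbr A_{\labs_{12}}$ in the theorem agrees on the nose with the length/bar filtrations used to define $\ell^{\_ \ts{A} P}$ and $\ell^{\inthom_A(P,\_)}$ in the text. This is a compatibility-of-filtrations check — tedious but not deep — and once it is in place the corollary follows immediately from the naturality of $\Phi$ and the elementary observation that filtered isomorphisms preserve length.
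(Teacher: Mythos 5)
There is a genuine gap here: the adjunction $\Phi$ does not match $E \ts{A} P$ with $\inthom_{A}(P,E)$ for the same test object $E$, and no such filtered isomorphism exists --- one complex is of Tor type, the other of Ext type, and they are not quasi-isomorphic in general. What the theorem actually gives (placing $P$ in the slot that gets Hom'd out of, or equivalently using $\Phi^l$; note that with your assignment the right-hand side is $\inthom_{A}(P_1,\inthom_\K(P_2,P_3))$, i.e.\ Hom out of the \emph{first} factor, not out of $P$) is a filtered isomorphism $\inthom_\K(Q \ts{A} P, P_3) \simeq \inthom_{A}(P, \inthom_\K(Q,P_3))$. In particular the functor $\inthom_{A}(P,\_)$ is only ever evaluated at modules of the special ``dual'' form $\inthom_\K(Q,P_3)$, so no comparison of the two suprema over \emph{all} test objects comes out of the adjunction alone. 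Consequently your concluding equality $\ell^{\_\ts{A}P} = \ell^{\inthom_A(P,\_)}$ is not only unproved but false in general: as remarked right after the corollary, not every left module is quasi-isomorphic to some $\inthom_\K(Q,\K)$, so the inequality can be strict (for a formal algebra in degree zero this is the classical strict inequality of flat versus projective dimension, e.g.\ $\Q$ over $\Z$); equality holds only under extra hypotheses such as $P$ perfect, which is the content of Corollary \ref{cor:prequalsfl} and requires the duality proposition, not just the adjunction. Your closing speculation about why only ``$\leq$'' is stated misses this point.

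The missing step that makes the argument work is the specialization $P_3 = \K$ together with universal coefficients. Take $P_1 = Q$ an arbitrary module, $P_2 = P$, $P_3 = \K$; the filtered adjunction then reads $\inthom_\K(Q \ts{A} P, \K) \simeq \inthom_{A}(P, Q^\vee)$. Since $\K$ is a field, dualizing preserves the length of the induced filtration on cohomology, so the left-hand side has length exactly $\ell(Q \ts{A} P)$, while the right-hand side is $\inthom_{A}(P,\_)$ evaluated at the particular module $Q^\vee$, hence has length at most $\ell^{\inthom_{A}(P,\_)}$. Taking the supremum over $Q$ gives $\ell^{\_\ts{A}P} \leq \ell^{\inthom_{A}(P,\_)}$, which is the corollary; the restriction to test objects of the form $Q^\vee$ is precisely why one gets an inequality and not the equality you claim.
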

\begin{proof} Here we take $\labs_1 = \labs_2^*$ to be the labelled set $S^- = \{A \}$ and $S^+ = \emptyset$ while $\labs_3$ is just the empty labelled set. We take $P_1 = Q$ to be any $A$ module and $P_2 = P$, $P_3 = \K$. Then the filtered adjunction \ref{eq:adj2} reads $\inthom_\K (Q \ts{A} P , \K ) \simeq \inthom_A (P, \inthom_\K (Q, \K ))$. By universal coefficients, the left hand side has length equal to $\ell (Q \ts{A} P )$ while the right hand side has length $ \ell ( \inthom_A (P, \inthom_\K (Q , \K )))$. As the $Q$ is arbitrary, we have then that the supremum $\ell^{\_ \ts{A} P}$ is less than or equal to the supremum $\ell^{\inthom_A (P ,  \_ )}$ verifying the claim. 
\end{proof}
For formal algebras concentrated in degree zero, the above corollary is the elementary fact that flat dimension is less than or equal to projective dimension. We note that for arbitrary (formal and non-formal) algebras $A$, it is not the case that all left modules are quasi-isomorphic to $\inthom_\K (Q, \K )$ for some $Q$, so just as in the formal setting, this inequality can be strict. We will observe conditions for which this inequality is an equality below.

The dual $P^\vee$ of an $\labs$ polymodule $P$ is the $\labs^*$ polymodule $\inthom_\K (P, \K )$. We start with an elementary lemma for perfect polymodules over compact algebras.

\begin{prop} Suppose $\labs$ labels compact algebras. Then 
\begin{equation*}\_^\vee : \labs -\pcmd \to \labs^* -\pcmd \end{equation*} is an equivalence of categories and there is a natural isomorphism $\Theta : I \to (I^\vee )^\vee$. 
\end{prop}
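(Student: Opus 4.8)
The plan is to build the duality functor on morphisms, verify it lands in perfect polymodules, and then produce the unit $\Theta$ using the evaluation/coevaluation data already available from the $\otimes$-$\inthom$ adjunction. First I would define $\_^\vee$ on a morphism $\phi \in \Hom_{\labs \mhyphen \cmd}(P, P')$ as the precomposition map $\phi^\vee = \inthom_\K(\phi, \one_\K)$; by the filtered adjunction theorem applied to the trivial gluing cycle (as in the preceding Corollary, with $\labs_3$ empty), this is the internal Hom of a $\dg$ functor, hence a $\dg$ functor itself, and it respects the $\Z^{|\labs|}$-lattice filtration. Because $\inthom_\K(\_,\_)$ exchanges left and right comodule structures (the $-\alpha$ in the filtered internal Hom definition), $\_^\vee$ sends $\labs$ polymodules to $\labs^*$ polymodules, and a short computation with the cone formula shows $\mu_{cone(\phi)^\vee} \cong \mu_{cone(\phi^\vee)[\mathrm{shift}]}$, so $\_^\vee$ is exact.

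Next I would check $\_^\vee$ preserves perfection. It suffices to check it on a finitely generated projective polymodule, i.e.\ a direct summand of a finite sum of copies of $\Unit_\labs$, since $\_^\vee$ is exact and additive and perfect objects are finitely-built from these by cones. For $\Unit_\labs = \otimes_{t} \kappa(t)$, compactness of the labelling algebras means $\inthom_\K(\Unit_\labs, \K) = \otimes_t \kappa(t)^\vee$ is, up to quasi-isomorphism, again finitely built out of $\Unit_{\labs^*}$: indeed each $\kappa(t)^\vee$ has finite-dimensional cohomology $H^*(\kappa(t))^\vee$, and via the minimal-model machinery (Proposition on minimal models) one resolves it by a finite twisted complex of copies of the free module. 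Thus $\_^\vee$ restricts to a functor $\labs \mhyphen \pcmd \to \labs^* \mhyphen \pcmd$.

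For the natural transformation $\Theta : I \to (\_^\vee)^\vee$, I would use the canonical biduality map of graded vector spaces $\Theta_P : P \to \inthom_\K(\inthom_\K(P,\K),\K)$, $\Theta_P(p)(\xi) = (-1)^{|p||\xi|}\xi(p)$, and verify it is a polymodule homomorphism by unwinding the structure maps through the twisted tensor product presentation of $\inthom_\K$ — this is formal from naturality of $\gamma$ and the twisting-cochain identity. Finally, $\Theta_P$ is a quasi-isomorphism precisely when $H^*(P)$ is finite-dimensional in each degree and bounded, which holds for perfect polymodules over compact algebras since they are finitely built from $\Unit_\labs$ (whose cohomology is $\otimes_t H^*(\kappa(t))$, finite-dimensional by compactness) and cones preserve this finiteness. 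Since a quasi-isomorphism of polymodules that is also a homomorphism is an isomorphism in the homotopy category $H^0(\cmd)$, naturality of $\Theta$ together with $\Theta$ being a quasi-isomorphism upgrades $\_^\vee$ to an equivalence (it is essentially surjective because $(\_^\vee)^\vee \simeq I$ via $\Theta$, and full and faithful because $\_^\vee$ applied twice is naturally the identity).

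I expect the main obstacle to be the second step: controlling the dual of $\Unit_\labs$ and showing it is \emph{perfect} rather than merely having finite-dimensional cohomology. Over a non-formal compact $A_\infty$ algebra $A$, the module $A^\vee = \inthom_\K(A, \K)$ is the ``dualizing'' module $\Dualizing$, and exhibiting it as a finite iterated cone of copies of the free module requires genuinely using compactness to truncate a minimal-model resolution — the abstract finiteness of $\dim H^*(A)$ does not by itself bound the length of such a resolution without an argument (e.g.\ induction on $\dim H^*(A)$, splitting off a simple quotient at each stage, analogous to the finite global-dimension argument for finite-dimensional algebras). Everything else reduces to bookkeeping with the filtered adjunction and naturality of the Koszul sign rule.
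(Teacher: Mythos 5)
Your construction of $\Theta$ is exactly the map the paper uses (same Koszul sign), but the paper gets it much more cheaply: it first replaces a perfect $P$ by its minimal model $P_{min}$, which is a \emph{finite-dimensional} graded vector space because perfect modules are finitely built from $\Unit_\labs$ and the labelling algebras are compact. On $P_{min}$ the linear-algebra biduality $\Theta_{\gr}: P_{min} \to (P_{min}^\vee)^\vee$ is an honest isomorphism, and it is checked directly from the definition of $\inthom_\K$ that it is a \emph{strict} module homomorphism; no discussion of quasi-isomorphisms, boundedness of $H^*(P)$, exactness of $\_^\vee$, or cone compatibility is needed. Your route through "$\Theta_P$ is a quasi-isomorphism when $H^*(P)$ is degreewise finite and bounded" reaches the same conclusion but with more machinery than the paper invokes.

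The substantive issue is the step you yourself flag as the main obstacle: showing that $\Unit_\labs^\vee$ (hence the dual of any perfect module) is again \emph{perfect}, which your argument needs both for $\_^\vee$ to land in $\labs^* \mhyphen \pcmd$ and for essential surjectivity. The fallback you sketch --- induction on $\dim H^*(A)$, splitting off a simple quotient at each stage, "analogous to the finite global-dimension argument for finite-dimensional algebras" --- does not work: compact ($A_\infty$-)algebras need not have finite global dimension, and their simple modules need not be perfect (already for $\K[x]/(x^2)$, or for the algebra $k[\theta]/(\theta^2)$ with one higher product appearing in the paper's matrix-factorization example, the simple module is not finitely built from the free module). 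For an ordinary finite-dimensional algebra $A$, perfection of $A^\vee = \Hom_\K(A,\K)$ is a Gorenstein-type condition, so it cannot be extracted from compactness alone by such a resolution argument. For what it is worth, the paper's own proof is silent on this point --- it reduces to a single algebra, passes to minimal models, and only verifies the biduality isomorphism --- so your instinct that this is the delicate part of the statement is sound, but the argument you propose for it would fail as stated.
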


\begin{proof} We prove this for the case of $\labs $ labelling a single  compact algebra $A$ as the general case is the same.
Every perfect $A$ module $P$ has a finite dimensional minimal model $P_{min}$ defined uniquely up to isomorphism. Thus there is the usual graded vector space natural isomorphism $\Theta_{\gr} : P_{min} \to (P_{min}^\vee )^\vee$ defined in the usual way $[\Theta_{\gr} (p) ] (l) = (-1)^{|l| |p|} l (p)$. It is immediate from the definition of internal hom that $\Theta :=  \Theta_{\gr}$ is indeed a strict $A$-module homomorphism. 
\end{proof}

To generalize this proposition, we fix gluing data $\glue$ between $\labs_1^*$ and $\labs_2$. The following proposition, which was observed early in homological algebra, is stated below in terms of polymodules.

\begin{prop} Suppose $P_i$ is a $\labs_i$ polymodule and $P_2$ is a perfect $\labs_2 $ polymodule. If the algebras labelled by $\labs_2 - i_2 (\labs_0)$ are compact, then there is a natural filtered quasi-equivalence
\begin{equation*} P_1^\vee \ts{\labs_0} P_2 \simeq \inthom_{\labs_0} (P_2, P_1 )^\vee. \end{equation*}
\end{prop}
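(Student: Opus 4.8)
The plan is to build a natural comparison morphism, reduce the perfect polymodule $P_2$ to the free polymodule of rank one by a d\'evissage, and then check the statement in that base case, which is where compactness enters. For the comparison morphism, observe that there is an evaluation pairing
\[ \inthom_{\labs_0}(P_2, P_1) \otimes_\K \left( P_1^\vee \ts{\labs_0} P_2 \right) \longrightarrow \K \]
sending a morphism $\phi$ together with an element $l \otimes \mathbf{c} \otimes p$ of $P_1^\vee \otimes \acbr A_{\labs_0} \otimes P_2$ to $l\bigl(\phi(\mathbf{c} \otimes p)\bigr)$; the defining equations for polymodules and their morphisms show this is a cocycle compatible with all the remaining $A_\infty$-structures. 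Equivalently, the adjunction $\Phi^l$ of the previous theorem applied with third argument $\K$ gives a natural isomorphism $\bigl(P_1^\vee \ts{\labs_0} P_2\bigr)^\vee \simeq \inthom_{\labs_0}(P_2, P_1^{\vee\vee})$, and the pairing is this isomorphism combined with the canonical double-dual morphism $P_1 \to P_1^{\vee\vee}$ (the map underlying the $\Theta$ of the earlier proposition, but now without assuming it invertible). The adjunct of the pairing is a morphism
\[ \Psi_{P_2} : P_1^\vee \ts{\labs_0} P_2 \longrightarrow \inthom_{\labs_0}(P_2, P_1)^\vee \]
in the filtered polymodule category of the statement, natural in $P_2$; it respects the $\Z^{|\labs_0|}$-filtration because the adjunction does and because $(\_)^\vee$ and the double-dual map do not touch the $\labs_0$-direction.

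Next I would note that both functors $P_2 \mapsto P_1^\vee \ts{\labs_0} P_2$ and $P_2 \mapsto \inthom_{\labs_0}(P_2, P_1)^\vee$ are exact: the tensor product $\_ \ts{\labs_0} \_$ is exact by construction, $\inthom_{\labs_0}(\_, P_1)$ carries a cone to a cone by the standard triangle for morphisms out of a cone, and $(\_)^\vee = \inthom_\K(\_, \K)$ is exact over a field. They also carry finite direct sums and retracts to the same, and all of this holds in the filtered sense since $\cmd$ is enriched over lattice-filtered complexes. Since every perfect $\labs_2$ polymodule is a retract of a finite iterated cone of finitely generated projective polymodules, and each of those is a retract of a finite sum of copies of $\Unit_{\labs_2}$, naturality of $\Psi$ reduces the problem to showing that $\Psi_{\Unit_{\labs_2}}$ is a filtered quasi-equivalence.

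For this base case, restricting $\Unit_{\labs_2} = \otimes_{t \in \labs_2} \kappa(t)$ along $i_2$ identifies it, as an $\labs_0$ polymodule, with $\Diagonal_{\labs_0} \otimes_\K W$, where $W = \otimes_{A \in \labs_2 - i_2(\labs_0)} A$ carries the trivial $\labs_0$-action. Using the unit equivalences $\xi, \epsilon$ of \ref{eq:units} and their internal-Hom analogues $\chi, \upsilon$, one computes $P_1^\vee \ts{\labs_0} \Unit_{\labs_2} \simeq P_1^\vee \otimes_\K W$ and $\inthom_{\labs_0}(\Unit_{\labs_2}, P_1) \simeq \inthom_\K(W, P_1)$, with the $\Z^{|\labs_0|}$-filtration quasi-isomorphically concentrated in bottom degree on each side --- that the length filtration of $\acbr A_{\labs_0}$ is acyclic in positive length is precisely the statement that $\xi$ and $\epsilon$ are mutually quasi-inverse. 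Here the compactness of the algebras labelled by $\labs_2 - i_2(\labs_0)$ is used: replacing each such $A$ by its finite-dimensional minimal model, which is harmless up to quasi-equivalence, makes $W$ finite-dimensional, so that $\inthom_\K(W, P_1)^\vee \cong W \otimes_\K P_1^\vee$ canonically, and one verifies directly that under these identifications $\Psi_{\Unit_{\labs_2}}$ becomes the identity of $W \otimes_\K P_1^\vee$. Notice that no reflexivity hypothesis on $P_1$ is needed --- only the finite-dimensionality of $W$.

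I expect the main obstacle to be the base-case bookkeeping: tracking the residual polymodule structure over the glued labelled set through the identifications with $W \otimes_\K P_1^\vee$, confirming that the two $\Z^{|\labs_0|}$-filtrations genuinely coincide rather than merely that the underlying complexes are abstractly quasi-isomorphic, and checking that it is $\Psi$ itself --- not some other a priori unrelated quasi-isomorphism --- that implements the equivalence, which is what legitimizes the naturality invoked in the d\'evissage.
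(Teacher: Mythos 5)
Your proposal is correct and follows essentially the same route as the paper: a natural comparison map $\Psi$ (which you construct via the adjunction and double-dual map rather than the paper's explicit formula, but it is the same morphism), reduction to $P_2 = \Unit_{\labs_2}$ by exactness, naturality and the definition of perfection, and a base case that splits off the factor labelled by $\labs_2 - i_2(\labs_0)$, uses compactness/minimal models to make it finite dimensional, and identifies $\Psi$ on the remaining $\Unit_{\labs_0}$ (equivalently $\Diagonal_{\labs_0}$) factor through the unit quasi-isomorphisms $\xi$ and $\chi$. The paper's base-case verification is exactly your ``$\Psi$ becomes the identity'' step, phrased as the quasi-commutativity of the triangle $\Psi \simeq (\chi_{P_1})^\vee \circ \xi_{P_1^\vee}$.
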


\begin{proof} First we define a morphism $\Psi : P_1^\vee \ts{\labs_0} P_2 \to \inthom_{\labs_0} (P_2 , P_1 )^\vee $  of filtered $\labs_1^* \sharp_{\labs_0} \labs_2$ polymodules by
\begin{equation*} [\Psi ( \mathbf{a} \otimes \phi \otimes \mathbf{b} \otimes p \otimes \mathbf{c} )] (\psi) = (-1)^{|\psi| (|p| + |\mathbf{b}| + |\mathbf{c}|) + |\phi| |\mathbf{a}|} \phi (\mu_{\inthom_{\labs_0} (P_2 , P_1 )} (\mathbf{a} \otimes \psi \otimes \mathbf{c}) (\mathbf{b} \otimes p )).
\end{equation*}
It is plain to see that $\Psi$ preserves the lattice filtrations and that $\Psi$ is a natural transformation. 

Now we check to see that $\Psi$ is a quasi-isomorphism for $P_2 = \Unit_{\labs_2}$. Write $\labs_2^\prime $ for $\labs_2 - i_2 (\labs_0 )$ and note that $\Unit_{\labs_2} =  \Unit_{\labs_0} \otimes \Unit_{\labs_2^\prime}$. By choosing minimal models for the algebras labelled by $\labs_2^\prime$ we may assume $\Unit_{\labs_2^\prime}$ is a finite dimensional vector space over $\K$. This gives 
\begin{equation*} P_1^\vee \ts{\labs_0} \Unit_{\labs_2} = ( P_1^\vee \ts{\labs_0} \Unit_{\labs_0} ) \boxtimes \Unit_{\labs_2^\prime}. \end{equation*}
While on the other side we obtain
\begin{eqnarray*} \inthom_{\labs_0} (\Unit_{\labs_0} , P_1 )^\vee & = & \inthom_{\labs_0} (\Unit_{\labs_0} , P_1 )^\vee \boxtimes (\Unit_{\labs_2^\prime}^\vee )^\vee ,\\ & = & \inthom_{\labs_0} (\Unit_{\labs_0} , P_1 )^\vee \boxtimes \Unit_{\labs_2^\prime}, \end{eqnarray*}
where the last equality follows from the compactness assumption. It is easy to see that $\Psi$ factors through this tensor decomposition of $\Unit_{\labs_2}$, so we need only show the equivalence on the $\Unit_{\labs_0}$ factor.

For this, observe that the tensor product and internal Hom with $P_2 = \Unit_{\labs_0}$ yields the same complex as $P_2 = \Diagonal_{\labs_0}$ we restrict $\xi$ to obtain the following quasi-commutative diagram
\begin{figure}
 \begin{tikzcd}P_1^\vee \ts{\labs_0} \Unit_{\labs_0} \arrow{rr}{\Phi} \arrow{rd}[swap]{\xi_{P_1^\vee}} & & \inthom_{\labs_0} ( \Unit_{\labs_0}, P_1 )^\vee  \\ & P_1^\vee \arrow{ur}[swap]{(\chi_{P_1} )^\vee} & \end{tikzcd}
\end{figure}



By exactness of $ P_1^\vee \ts{\labs_0} \_$ and $\inthom_{\labs_0} (\_ , P_1 )^\vee$ and naturality of $\Psi$, we have that $\Psi$ induces a quasi-isomorphism on perfect $\labs_2$ polymodules. As was observed above, $\Psi$ respects filtrations which yields the claim.
\end{proof}

As a corollary, we have the following important fact

\begin{cor}\label{cor:prequalsfl} Suppose $A \in Alg_\infty$ and $P \in A - \pcmd$. Then
\begin{equation*} \ell^{\Hom (P , \_ )} = \ell^{\_ \ts{} P } . \end{equation*}
\end{cor}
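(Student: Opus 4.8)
The plan is to deduce the equality from the previous proposition together with the corollary following the filtered adjunction theorem. Recall that that corollary gives the inequality $\ell^{\_ \ts{A} P} \leq \ell^{\inthom_A (P, \_ )}$ for any $A_\infty$-algebra $A$ and any $A$-module $P$, and the equality we want asserts that when $P$ is moreover \emph{perfect}, this inequality is saturated. So the entire content is the reverse inequality $\ell^{\inthom_A (P, \_ )} \leq \ell^{\_ \ts{} P}$, which I would extract from the natural filtered quasi-equivalence $P_1^\vee \ts{\labs_0} P_2 \simeq \inthom_{\labs_0} (P_2, P_1 )^\vee$ of the preceding proposition, specialized to the one-algebra situation.

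Concretely, I would take $\labs_0 = \labs_2$ to label the single compact (one must first reduce to the compact case — see below) algebra $A$ and $\labs_1$ the empty labelled set, so that $\labs_1$-polymodules are just complexes; set $P_2 = P$, which is perfect by hypothesis, and let $P_1 = Q$ range over arbitrary $A$-modules, viewed as right modules via $\labs_1^* \sharp_{\labs_0} \labs_2$. The proposition then yields, for every $A$-module $Q$, a natural filtered quasi-equivalence $Q^\vee \ts{A} P \simeq \inthom_A (P, Q)^\vee$. Taking lengths of both sides and using that $\vee = \inthom_\K(\_,\K)$ preserves lengths by universal coefficients (exactly as in the proof of the corollary to the adjunction theorem), we get $\ell(Q^\vee \ts{A} P) = \ell(\inthom_A(P,Q))$. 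Now as $Q$ ranges over all $A$-modules, $Q^\vee$ ranges over a class of $A$-modules sufficient to compute the supremum defining $\ell^{\_\ts{}P}$: this is the step where perfectness of $P$ enters, since by the duality equivalence $\_^\vee : A\mhyphen\pcmd \to A^{op}\mhyphen\pcmd$ (the proposition on perfect polymodules over compact algebras, with its biduality isomorphism $\Theta$) every perfect module is of the form $Q^\vee$, and for perfect $P$ the functor $\_ \ts{} P$ applied to arbitrary modules is controlled by its values on perfect modules. Taking suprema over $Q$ then gives $\ell^{\_ \ts{} P} \geq \ell^{\inthom_A(P,\_)}$, and combined with the corollary above this is the desired equality.

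The main obstacle is the reduction to compact $A$ and the precise bookkeeping that shows $\{Q^\vee : Q \in A\mhyphen\cmd\}$ suffices to compute the supremum $\ell^{\_\ts{}P} = \sup\{\ell(Q' \ts{} P) : Q' \in A^{op}\mhyphen\cmd\}$. For a general $A_\infty$-algebra not every module is a dual, so one cannot literally substitute $Q' = Q^\vee$ for arbitrary $Q'$; instead I would argue that since $P$ is perfect, the functor $\_ \ts{} P$ is exact and commutes with the relevant colimits, so its length is attained on (or bounded by the lengths on) perfect test modules $Q'$, and every perfect $Q'$ \emph{is} a dual, namely $Q' \simeq (Q'^\vee)^\vee$ via $\Theta$, with $Q'^\vee$ perfect. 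Working this out cleanly — checking that the length of a functor on $\cmd$ is determined by its restriction to $\pcmd$ when the functor is exact and the source module is perfect — is the one genuinely non-formal point; everything else is assembling the already-proved natural isomorphisms and passing to suprema.
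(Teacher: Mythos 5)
Your ingredients are the right ones and match the paper's (implicit) argument: the corollary is meant to follow immediately from the preceding proposition $P_1^\vee \ts{\labs_0} P_2 \simeq \inthom_{\labs_0}(P_2,P_1)^\vee$ together with the earlier adjunction corollary $\ell^{\_\ts{A}P} \leq \ell^{\inthom_A(P,\_)}$ and universal coefficients. However, you have attached the difficulty to the wrong inequality, and the step you single out as the ``one genuinely non-formal point'' is not needed at all. Once you have, naturally in $Q$, the filtered quasi-equivalence $Q^\vee \ts{A} P \simeq \inthom_A(P,Q)^\vee$ and hence $\ell(\inthom_A(P,Q)) = \ell(Q^\vee \ts{A} P)$, the reverse inequality is immediate: for every test module $Q$ the number $\ell(\inthom_A(P,Q))$ is realized as the length of the tensor functor at the particular module $Q^\vee$, so it is bounded by the supremum $\ell^{\_\ts{}P}$ taken over \emph{all} modules. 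You never need the class $\{Q^\vee\}$ to exhaust, or be cofinal in, the test objects for $\ell^{\_\ts{}P}$; that concern is relevant only to the opposite inequality $\ell^{\_\ts{}P} \leq \ell^{\inthom_A(P,\_)}$, and that inequality was already proved (for arbitrary, not necessarily perfect, $P$) via the filtered adjunction, precisely because $\ell(Q\ts{A}P) = \ell(\inthom_A(P,Q^\vee))$ there with $Q$ arbitrary. So your unproven claim that the length of $\_\ts{}P$ is ``attained on perfect test modules'' can simply be deleted; nothing in the corollary depends on it.

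The same goes for your proposed ``reduction to the compact case.'' In the specialization you need, $\labs_0$ and $\labs_2$ both label the single algebra $A$, so $\labs_2 - i_2(\labs_0)$ is empty and the compactness hypothesis of the proposition is vacuous; compactness enters the proposition's proof only through the factor $\Unit_{\labs_2 - i_2(\labs_0)}$, which is absent here. This matters beyond economy: a general $A \in Alg_\infty$ is not quasi-isomorphic to a compact algebra, so a proof that genuinely required such a reduction would not establish the corollary as stated. With those two detours removed, your argument collapses to the paper's: perfectness of $P$ is used exactly once, to invoke the duality proposition, and the two inequalities combine to give $\ell^{\Hom(P,\_)} = \ell^{\_\ts{}P}$.
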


This equality motivates the following definition.

\begin{defn} For $P \in A \mhyphen \pcmd$ we define the length of $P$ to be 
 \begin{equation*} \ell (P) :=  \ell^{\Hom (P , \_ )} = \ell^{\_ \ts{} P }, \end{equation*}
 and define the global length of $A \mhyphen \pcmd$ to be the supremum 
 \begin{equation*} \ell (A \mhyphen \pcmd) :=  \sup \{\ell (P ): P \in A \mhyphen \pcmd \}. \end{equation*}
\end{defn}
\section{Dimensions of $A_\infty$ categories}

In this section we lift many of the definitions and theorems of the dimension theory for triangulated categories to the pretriangulated setting. After recalling some definitions and results from triangulated and pretriangulated categories from  \cite{BFK, Lefevre, Orlov, Rouquier}, we prove our first main theorem that equates filtered length of internal homs with the generation time of a given object. We follow this with a proof of the base change formula for $A_\infty$-algebras. 

\subsection{Generators in triangulated categories}

We take a moment to recall some definitions and notation from
\cite{Rouquier}. Given a triangulated category $\mcc$ and a
subcategory $\mci$, we define $\cl{I}$ to be the smallest full
subcategory of $\mcc$ closed under direct summands, finite direct
sums and shifts. Given two subcategories $\mci_1 , \mci_2 \subset
\mct$, we define $\mci_1 * \mci_2$ to be the category of objects
$N$ such that there exists a distinguished triangle
\begin{equation*} M_1 \to N \to M_2 \to \end{equation*} 
in $\mct$ with $M_1 \in \mci_1$ and $M_2 \in \mci_2$. We take $\mci_1
\diamond \mci_2 := \cl{\mci_1 * \mci_2}$. It follows from the
octahedral axiom that $\diamond$ is an associative operation, so
the category $\mci^{\diamond d}$ is well defined. With this
notation in hand, the following definitions can be stated.

\begin{defn} Let $\mct \subseteq \mcc$. 
\begin{itemize}
 \item[i)] $\mci$ generates $\mct$ if given $N \in
\mct$ with $\Hom_\mct ( M[i], N ) = 0$ for all $M \in \mci$ and all
$i \in \Z$, then $N = 0$.

\item[ii)] $\mci$ is a $d$-step generator of $\mct$ if $\mct =
\mci^{\diamond d}$.

\item[iii)] $\mct$ is finitely generated if there exists $G \in \mct$
which generates $\mct$. In this case we call $G$ a generator for
$\mct$.

\item[iv)] $\mct$ is strongly finitely generated if there exists $M \in
\mct$ which is a $d$-step generator.
\end{itemize}
\end{defn}

We utilize the above definitions to define level and dimension as
follows.

\begin{defn} If $G$ generates $\mct$ and $M \in \mct$ we say the
level of $M$ with respect to $G$ is
\begin{equation*} \lvl_G (M) = \min \left\{ d : M \in \cl{G^{\diamond
(d - 1) } } \right\} \end{equation*} and the generation time of $G$ is
\begin{equation*} \gt (G) = \min \left\{ d : \mct =
\cl{G^{\diamond (d - 1) } } \right\}
\end{equation*}
The dimension of a category $\mct$ with generators is defined to
be the smallest generation time. The Orlov spectrum of $\mct$ is the set of all generation times.
\end{defn}


The central theme of this paper is to enhance the above
definitions into the language of \dg and $A_\infty$-categories.
Thus we will assume our category $\mct$ is always a subcategory of
the homotopy category $H^0 (\mca )$ for some pretriangulated
$A_\infty$-category $\mca$. If $\mct$ is an algebraic triangulated category, this is implied by a theorem of Lef\`evre-Hasegawa which we site below. First we fix notation and, in triangulated category $\mct$, write $\Hom_{\mct}^* (M, N)$ for the algebra $\oplus_{n \in \Z} Hom_{\mct} (M, N[n])$.

\begin{thm}[7.6.0.4, \cite{Lefevre}] \label{thm:LF} If $\mct$ is an algebraic triangulated which is strongly generated by an object $G$, then there is an $A_\infty$ structure on $A_G := \Hom_\mct^* (G, G)$ such that the Yoneda functor evaluated at $G$ from $\mct$ to $H^0 (A_G \mhyphen \pcmd)$ is a triangulated equivalence.
\end{thm}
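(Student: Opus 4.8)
This is Lef\`evre-Hasegawa's theorem, so I only indicate the argument. \textbf{Reduction to a dg algebra.} Since $\mct$ is algebraic, choose a pretriangulated dg category $\mca$ with $H^0 (\mca ) \simeq \mct$ and lift $G$ to an object $\tilde G \in \mca$. Put $B = \Hom_\mca (\tilde G , \tilde G)$, a dg algebra — hence in particular an $A_\infty$-algebra — with $H^* (B) = \Hom_\mct^* (G, G)$. Sending $X \mapsto \Hom_\mca (\tilde G , X)$, regarded as a strict $B$-module, defines a dg functor $\mca \to B\mhyphen\cmd$ and therefore an exact functor $\bar h : \mct = H^0 (\mca ) \to H^0 (B\mhyphen\cmd )$.

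\textbf{Morita step.} The functor above is fully faithful at $\tilde G$, since $\Hom_\mca (\tilde G , \tilde G ) = B = \Hom_{B\mhyphen\cmd} (\Unit_B , \Unit_B )$ as dg algebras. As the target is pretriangulated and cohomology commutes with the cone, the five-lemma propagates full faithfulness of $\bar h$ through cones, shifts and direct summands, i.e. over the thick subcategory generated by $G$. By hypothesis $G$ strongly generates, so $\mct = \cl{G^{\diamond d}}$ for some $d$; thus $\mct$ \emph{is} that thick subcategory, $\bar h$ is fully faithful, and $\mct$ is idempotent complete. Its essential image is a thick subcategory of $H^0 (B\mhyphen\cmd )$ containing $\bar h (\tilde G ) = \Unit_B$; since $\Unit_B$ thickly generates $H^0 (B\mhyphen\pcmd )$ and, conversely, $\bar h (\mct ) \subseteq H^0 (B\mhyphen\pcmd )$ (because $\mct$ is thickly generated by $\tilde G$ and perfect modules form a thick subcategory), $\bar h$ restricts to a triangulated equivalence $\mct \stackrel{\sim}{\longrightarrow} H^0 (B\mhyphen\pcmd )$.

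\textbf{Passage to the minimal model.} The homotopy transfer (minimal model) construction recalled above equips $A_G = H^* (B)$ with an $A_\infty$-structure and a quasi-isomorphism $B \to A_G$ of $A_\infty$-algebras. Restriction of scalars along an $A_\infty$-quasi-isomorphism is a quasi-equivalence of module categories that preserves the property of being perfect, so it induces a triangulated equivalence $H^0 (B\mhyphen\pcmd ) \stackrel{\sim}{\longrightarrow} H^0 (A_G\mhyphen\pcmd )$. Composing with the Morita step yields the desired equivalence; chasing the composite shows that it sends $X$ to $\Hom_\mct^* (G, X)$ with its transferred $A_\infty$-module structure, i.e. it is the Yoneda functor evaluated at $G$, and it is triangulated because every functor used is exact.

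\textbf{Main obstacle.} The real content is the Morita step: checking that full faithfulness survives the d\'evissage over cones and summands, and that strong generation supplies both the inclusion $\bar h (\mct ) \subseteq H^0 (B\mhyphen\pcmd )$ and idempotent completeness of $\mct$, so that the essential image fills out all of $H^0 (B\mhyphen\pcmd )$. Given the $A_\infty$-module formalism already developed, the passage to the minimal model is then formal.
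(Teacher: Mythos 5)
The paper offers no proof of this statement: it is imported verbatim from Lef\`evre-Hasegawa (7.6.0.4), so there is no internal argument to compare against. Your sketch follows the same standard route as that source --- pass to a dg enhancement, set $B = \Hom_{\mca}(\tilde G,\tilde G)$, prove full faithfulness of $X \mapsto \Hom_{\mca}(\tilde G, X)$ by d\'evissage over cones, shifts and summands, identify the essential image with the perfect modules, and transfer along the minimal model $B \to A_G$ --- and those steps are the intended proof; the minimal-model transfer and the d\'evissage are fine as sketched.

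The one place where you assert rather than argue is the clause ``$\mct$ is idempotent complete.'' The equality $\mct = \cl{G^{\diamond d}}$ only says that $\mct$ is closed under those direct summands that already exist in $\mct$ --- true of any triangulated category with respect to its own objects --- and the paper's definition of strong generation does not give splitting of idempotents. Yet your essential-surjectivity step uses exactly this: already the finitely generated projective modules are, by definition, direct summands of $\Unit_{A_G}^{\oplus n} = \bar h(\tilde G^{\oplus n})$ in $H^0(A_G \mhyphen \pcmd)$, and by full faithfulness the corresponding idempotents come from idempotents on $G^{\oplus n}$ in $\mct$; such an object lies in the essential image only if that idempotent splits in $\mct$. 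Without idempotent completeness the argument yields only an equivalence of $\mct$ with a triangulated subcategory of $H^0(A_G \mhyphen \pcmd)$ that is dense up to summands, i.e.\ an equivalence after Karoubi completion. So this must either be carried as a standing hypothesis (it holds in every example used in this paper, and the statement itself forces it, since the target category is closed under the relevant summands) or be supplied by an additional argument; it does not follow formally from strong generation as defined here.
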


We say that a pretriangulated $A_\infty$-subcategory $\mcb$ (strongly) generates if $H^0 (\mcb)$
does in $H^0 (\mca )$. We also use the same language and notation
as above for level, generation time and dimension. 


\begin{figure}
 \includegraphics{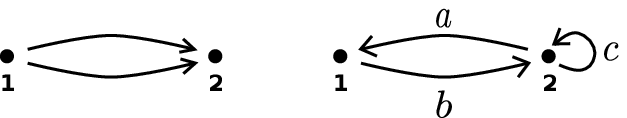}
 \caption{\label{fig:quiv1} Quivers of generators for $\mcd^b (\p^1 )$}
\end{figure}

Before proceeding with this discussion, we take a moment to illustrate this theorem with some examples.

\begin{eg} For $\p^n$, Beilinson showed (\cite{Beilinson}) that $\langle \mco, \mco (1) , \cdots \mco (n) \rangle$ forms a full exceptional collection for $\mcd^b (\p^n )$. Taking $G = \oplus_{i = 0}^n \mco (i)$ then gives a generator. From grading considerations, the endomorphism algebra $A_G$ has no higher products so $\mcd^b (\p^n ) \simeq H^0 (A_G \mhyphen \pcmd )$. In the case of $n = 1$, $A_G$ is the path algebra of the Kronecker quiver illustrated in Figure \ref{fig:quiv1}.
\end{eg}

Exceptional collections in the dimension theory of triangulated categories were studied in \cite{BFK}. In general, one can mutate an exceptional collection to obtain a new exceptional collection. Below we examine one such mutated case.

\begin{eg} Let $n = 1$, then mutating $\langle \mco , \mco(1) \rangle$ we obtain the collection $\langle \mco , \mco_p \rangle$. The algebra $A_{G^\prime}$ is the quiver algebra with relations given in the middle of Figure \ref{fig:quiv1} where $\deg (a) = 1 = \deg (c)$ and $\deg (b) = 0$ and $ba = c$. Here, the grading does not preclude the existence of higher products, but it is not hard to exhibit a quasi-isomorphism from this algebra to the \dg algebra endomorphism algebra of the mutated objects in $A_G \mhyphen \pcmd$. Again we obtain the isomorphism $\mcd^b (\p^1 ) \simeq H^0 (A_{G^\prime} \mhyphen \pcmd )$ from coherent sheaves to graded modules over the graded algebra $A_{G^\prime}$.
\end{eg}



\begin{eg} \label{eg:3} Another studied example is the category of matrix factorizations for the function $f_n : \C \to \C$ via $f_n (z) = z^n$, or equivalently the derived category of singularities $D^b_{sg} (f_n^{-1} (0))$. It was observed in \cite{BFK} that every non-zero object of $MF(\C[[z]], f_n)$ is a strong generator and that the generator
\begin{equation*}
\begin{tikzcd} \C 
\rar[-to, to path={([yshift=0.5ex]\tikztostart.east) -- ([yshift=0.5ex]\tikztotarget.west) \tikztonodes}]{z^{n - 1}}
\rar[to-, to path={([yshift=-0.5ex]\tikztostart.east) -- ([yshift=-0.5ex]\tikztotarget.west) \tikztonodes}][swap]{z} 
& \C \in MF (\C[[z]],f_n) \end{tikzcd}
\end{equation*}
or $\mco_0 \in D^b_{sg} (f_n^{-1} (0))$ had maximal generation time. Also, in \cite{Dyckerhoff}, the computation of a minimal model for $A_G$ as a $\Z / 2\Z$ graded $A_\infty$-algebra was performed and found to equal $A_G = k[\theta] / (\theta^2)$ where $\deg (\theta ) = 1$ and all higher products vanish except $\mu^n (\theta, \theta, \cdots, \theta) = 1$. Again, we have $D^b_{sg} (f_n^{-1} (0)) \simeq H^0 (A_G \mhyphen \pcmd)$.
\end{eg}



Now, starting with an $A_\infty$ pretriangulated category $\mca$, let $G \in \mca$ be a generator and $A_G = \Hom^* (G, G)$ its $A_\infty$ endomorphism algebra. We define the $A_\infty$-functor $\text{ev}_G :
\mca \to  A_G \mhyphen \cmd$ via
\begin{equation*} \text{ev}_G (B ) = \Hom^* (G, B) \end{equation*} The map
$T^1$ on morphisms is composition and $T^k$ is defined using
higher multiplication. Clearly, $\text{ev}_G$ factors through the Yoneda
embedding and can be thought of as evaluation of Yoneda at the
point $G$. From Theorem \ref{thm:LF}, if $G$ is a strong generator, we have that that $\text{ev}_G$ is an quasi-equivalence with $A_G \mhyphen \pcmd$. In
particular, given any two objects, $M, N \in \mca$, the associated
map
\begin{equation} \label{eq:mor} H^* \text{ev}_G: H^* (\Hom^* (M, N)) \to
H^* (\Hom^* (\text{ev}_G M, \text{ev}_G N))
\end{equation} is an isomorphism. In a moment, we will examine the right hand side of \ref{eq:mor}.

Were we to have started out in the triangulated setting, we could
have defined the functor $\textbf{ev}_G : H^* \mca \to H^*
(A_G ) \mhyphen \pmd$. It is well known that the natural functor $\Phi: H^*
({A_G}\mhyphen \pcmd ) \to {H^* (A_G)}\mhyphen \pmd$ is not an equivalence of
categories. However, all of these categories and functors fit into
the diagram of categories below.
\begin{figure}[h]
\begin{tikzcd}
 \mca \arrow[maps to]{r}{H^*} \arrow{dd}[swap]{\text{ev}_G}{\simeq} & H^* (\mca) \arrow{dd}{H^* \text{ev}_G}[swap]{\simeq} \arrow{rd}{\textbf{ev}_G} & \\  & & {H^* (A_G)}\mhyphen \pmd \\
  A_G \mhyphen \pcmd   \arrow[maps to]{r}{H^*} & H^* (A_G \mhyphen \pcmd) \arrow{ru}{\Phi} & 
\end{tikzcd}
\end{figure}

The kernel (i.e. all morphisms sent to zero) of $\text{ev}_G$ is
defined to be the G-ghost ideal. We write this ideal as $\mcg_G$
and its $n$-th power to be $\mcg_G^n$. The following lemma will be
important for what follows and can be found in \cite{BFK}.

\begin{lem}[The Ghost Lemma] If $\mct$ is an algebraic triangulated category with strong generator $G$ such that $A_G$ is compact. Then $M \in \cl{G^{\diamond (d ) } } $ and $M \not\in \cl{G^{\diamond (d - 1) } }$ if and only if there exists an $N \in \mct$ such that $\mcg_G^d \Hom^* (M, N) \ne 0$.
\end{lem}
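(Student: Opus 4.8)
\emph{Strategy.} The statement is the conjunction of the Ghost Lemma and its converse, so the plan is to establish, for every $n\ge 1$, the equivalence $M\in\cl{G^{\diamond(n-1)}}\Leftrightarrow \mcg_G^n\Hom^*(M,-)=0$, and then read off the claim at $n=d$ and $n=d+1$. Throughout, a morphism $f$ is a $G$-ghost (an element of $\mcg_G$) precisely when $\Hom^*(G,f)=0$; since $\Hom^*(G,-)$ is multiplicative, $\mcg_G$ is a two-sided ideal, and I will use repeatedly the base observation that a $G$-ghost out of an object $E\in\cl{G}$ is zero: realizing $E$ as a summand of a finite $\bigoplus_i G[i]^{\oplus n_i}$, each component $G[i]\to X$ of $f$ through a chosen splitting is the value of $\Hom^*(G,f)$ on an identity, hence vanishes, so $f=0$.

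\emph{Ghost inclusion.} First I would show $M\in\cl{G^{\diamond(n-1)}}\Rightarrow \mcg_G^n\Hom^*(M,-)=0$ by induction on $n$, the case $n=1$ being the base observation. For $n\ge 2$, associativity of $\diamond$ lets me realize $M$ as a summand of an object fitting into a triangle $M'\xrightarrow{u}M\xrightarrow{v}M''\to M'[1]$ with $M'\in\cl{G^{\diamond(n-2)}}$ and $M''\in\cl{G}$, and a retraction argument reduces to this honest triangle. Given $G$-ghosts $M\xrightarrow{g_1}Y_1\to\cdots\xrightarrow{g_n}Y_n$, the map $g_1u\colon M'\to Y_1$ is again a $G$-ghost, so the induction hypothesis applied to $M'$ kills the composite of the $n-1$ ghosts $g_1u,g_2,\dots,g_{n-1}$, i.e. $(g_{n-1}\cdots g_1)\circ u=0$. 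Hence $g_{n-1}\cdots g_1$ factors through $M''=\mathrm{cone}(u)$ as $h\circ v$, so $g_n\cdots g_1=(g_nh)\circ v$ with $g_nh\colon M''\to Y_n$ a $G$-ghost out of $M''\in\cl{G}$, which is zero. Thus every $n$-fold ghost composite out of $M$ vanishes.

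\emph{Converse inclusion (the hard part, using compactness).} Next I would build a $G$-ghost tower $M=M_0\xrightarrow{g_0}M_1\xrightarrow{g_1}M_2\to\cdots$ as follows. Strong generation gives $\mct=\cl{G^{\diamond(N-1)}}$ for some $N$, so every $M_i$ produced lies in $\mct$; then $\Hom^*(G,M_i)$ is obtained from $A_G$ by finitely many triangles and summands, hence is a finite-dimensional graded $\K$-space because $A_G$ is compact. Choosing homogeneous elements $x_1,\dots,x_m$ that span $\Hom^*(G,M_i)$ and regarding $x_j$ as a map $G[n_j]\to M_i$, I set $E_i:=\bigoplus_j G[n_j]\in\cl{G}$ with the assembled map $E_i\to M_i$, which is onto after applying $\Hom^*(G,-)$; completing to a triangle $E_i\to M_i\xrightarrow{g_i}M_{i+1}\to E_i[1]$, the long exact sequence forces $\Hom^*(G,g_i)=0$, so $g_i$ is a $G$-ghost, and $M_{i+1}\in\mct$. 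Now if $\mcg_G^d\Hom^*(M,-)=0$, then $g_{d-1}\cdots g_0\colon M\to M_d$ lies in $\mcg_G^d\Hom^*(M,M_d)$ and so is zero; iterated octahedra on $M_0\to\cdots\to M_d$ exhibit $\mathrm{cone}(M_0\to M_d)$ as an iterated extension of $E_0[1],\dots,E_{d-1}[1]\in\cl{G}$, hence as an object of $\cl{G^{\diamond(d-1)}}$, while the vanishing of $M_0\to M_d$ identifies this cone with $M_d\oplus M[1]$. Therefore $M[1]$, and hence $M$, lies in $\cl{G^{\diamond(d-1)}}$.

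\emph{Conclusion and the main obstacle.} Combining the two inclusions gives the equivalence $M\in\cl{G^{\diamond(n-1)}}\Leftrightarrow \mcg_G^n\Hom^*(M,-)=0$ for all $n$; negating it at $n=d$ gives exactly that $M\notin\cl{G^{\diamond(d-1)}}$ iff $\mcg_G^d\Hom^*(M,N)\ne 0$ for some $N$, and the instance $n=d+1$ records $M\in\cl{G^{\diamond d}}$ as the remaining condition pinning $\lvl_G(M)$ to $d+1$, which together yield the stated characterization. The one genuinely delicate step is in the converse: each approximation $E_i\to M_i$ must be chosen with $E_i$ a \emph{finite} sum of shifts of $G$ --- an actual object of $\cl{G}$ --- so that $\mathrm{cone}(M_0\to M_d)$ really lands in $\cl{G^{\diamond(d-1)}}$; without compactness one only has the (in general infinite) coproduct of all shifts of $G$ mapping to $M_i$, which need not be a summand of a finite one, and it is precisely the finite-dimensionality of $\Hom^*(G,M_i)$ furnished by compactness of $A_G$ that saves the argument.
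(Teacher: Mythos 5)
The paper offers no proof of this lemma --- it is quoted from \cite{BFK} --- and your argument is exactly the standard one found there: the induction-plus-octahedron step for the ghost inclusion (with the correct retraction reduction to an honest triangle), and the ghost tower of finite approximations $E_i \to M_i$ for the converse, where you correctly locate the role of compactness of $A_G$ (together with strong generation) as guaranteeing that $\Hom^*(G,M_i)$ is finite dimensional so that each $E_i$ is a genuine object of $\cl{G}$. The only caveat concerns the statement rather than your proof: as literally written, the ``if'' direction also asserts $M \in \cl{G^{\diamond d}}$, which does not follow from the mere existence of a nonzero $d$-fold ghost composite; one needs in addition $\mcg_G^{d+1}\Hom^*(M,-)=0$, so the lemma should be read as characterizing the maximal such $d$, and your equivalence at $n=d$ together with the instance $n=d+1$ supplies precisely this corrected form.
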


An important conceptual point about this perspective is that, by
choosing a generating object $G$, we have enhanced the 
on the homotopy category of $\mca$ to a filtered category. This is not an invariant of
the $A_\infty$-category $\mca$, nor is it an invariant of the
triangulated category $H^* \mca$. It is additional structure
introduced by the choice of generator which provides homological
information relative to $G$.


\subsection{Ghosts and Length}

We will now establish the link between generation
time and filtration length. The following lemma is straightforward, but we supply a proof to establish some notation.

\begin{lem}\label{lem:basic} In $A\mhyphen \cmd$ we have $\ell (\Unit_A ) = 0$. \end{lem}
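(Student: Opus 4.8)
The plan is to compute the length directly from the definition $\ell(\Unit_A) = \sup\{\ell(\yoneda{\Unit_A}{P}) : P \in A\mhyphen\cmd\}$, where $\yoneda{\Unit_A}{P} = \Hom_{A\mhyphen\cmd}(P, \Unit_A)$ carries the length filtration $\mcf^{(\mathbf{i})}$ coming from the bar construction. So I must show that for every $A$-module $P$, the filtration on the cohomology $H^*\Hom_{A\mhyphen\cmd}(P,\Unit_A)$ is already everything in filtration degree $0$ and nothing below, i.e.\ $\ell_+ = \ell_- = 0$. Since the question is really about cohomology, the first move is to replace the Yoneda object by something more tractable: using the natural equivalence $\xi_P : \Diagonal_\labs \ts{\labs} P \to P$ from \eqref{eq:units}, or equivalently the adjunction of Theorem~\ref{eq:adj2} together with Corollary~\ref{cor:prequalsfl}, reduce computing $\ell$ of morphisms \emph{into} $\Unit_A$ to computing $\ell$ of the functor $\_\ts{}\Unit_A$, which by the Lemma preceding Proposition~\ref{prop} (the ``$A^r \ts{} P$'' lemma) and the fact that $\Unit_A = A$ as a module, is tautologically the identity up to quasi-isomorphism.

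The cleanest route, though, is probably to argue directly on the Hom-complex. A morphism $\phi \in \Hom_{A\mhyphen\cmd}(P, \Unit_A)$ is a graded map $\phi : \cbr P \to \Unit_A$; the filtration level $\mcf^{(i)}$ records whether $\phi$ vanishes on $\cbr_{-i} P$. The strict part, $\phi|_{\cbr_{>0}P} = 0$, sits in filtration $0$. The key computation is then: (1) every cohomology class in $H^*\Hom_{A\mhyphen\cmd}(P,\Unit_A)$ has a strict representative, so the filtration reaches everything by degree $0$, giving $\ell_+ \le 0$; and (2) no nonzero class lives in strictly negative filtration, because a morphism that vanishes on $\cbr_{\ge 0}P = \cbr P$ is zero, giving $\ell_- \ge 0$. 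Statement (2) is immediate — filtration degrees below $0$ impose vanishing on all of $\cbr P$. Statement (1) is where the unital structure of $A$ does the work: given any homomorphism $\phi$, one uses the strict unit $e_A$ and the unital conditions \eqref{eq:unit1}, \eqref{eq:unit2} to exhibit a homotopy (built from $\phi$ composed with insertion of $e_A$ and the comultiplication) relating $\phi$ to its strict truncation $\phi|_{\cbr_{\mathbf 0}P}$. This is the $A_\infty$ analogue of the fact that the bar resolution of a unital algebra is contractible — indeed when $A$ is an ordinary algebra $\abr A$ is acyclic, as noted in the text — and the homotopy contraction descends to the Hom-complex.

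I expect the main obstacle to be bookkeeping in step (1): writing down the explicit contracting homotopy on $\Hom_{A\mhyphen\cmd}(P, \Unit_A)$ with correct Koszul signs (using $\sigma$, $\susp_\otimes$ and the conventions fixed in Section~\ref{subsub:lf}) and verifying $d h + h d = \mathrm{id} - (\text{strict truncation})$ against the differential \eqref{eq:polymor}. Once that homotopy is in hand, $\ell_+(\yoneda{\Unit_A}{P}) \le 0$ holds for all $P$, combined with $\ell_-(\yoneda{\Unit_A}{P}) \ge 0$ we get $\ell = 0$ for each $P$ (noting $H^*\Hom(P,\Unit_A)$ need not be zero, so the filtration genuinely ``lands'' in degree $0$ rather than being vacuous), and taking the supremum over $P$ yields $\ell(\Unit_A) = 0$ as claimed. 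A cheap alternative that sidesteps the explicit homotopy: invoke that the bar embedding $\cbr : A\mhyphen\cmd \to (\ccm)^{lf}$ realizes $\Unit_A$ with its primitive filtration, and that $\acbr A$ is fibrant-cofibrant (Theorem~\cite{Lefevre}, 1.3) with the coaugmentation ideal filtration; the contraction onto the coaugmentation then gives the degree-$0$ statement formally.
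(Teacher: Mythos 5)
Your proposal computes the wrong quantity. In the paper the length of a module is the one in the Definition following Corollary \ref{cor:prequalsfl}, $\ell (P) = \ell^{\Hom (P, \_ )} = \ell^{\_ \ts{} P}$, and the filtration on a morphism complex $\Hom_{A \mhyphen \cmd} (P, P^\prime )$ is by vanishing on the filtered pieces of $\cbr P$, i.e.\ it is governed by the bar resolution of the \emph{source}. So $\ell (\Unit_A ) = 0$ is the assertion that for every module $M$, every homomorphism in $\mcf^1 \Hom_{A \mhyphen \cmd} (\Unit_A , M)$ is a boundary. You instead take the contravariant Yoneda object $\Hom_{A\mhyphen\cmd} (P, \Unit_A )$, filtered by $\cbr P$, and sup over $P$ (note also that the Yoneda-based length of an object in $\mcd^{lf}$ presupposes a lattice filtration on the object, which $\Unit_A$ does not carry); this is an injective-type dimension of $\Unit_A$ and is not zero in general. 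Concretely, your key claim (1) --- every class of $H^* \Hom_{A\mhyphen\cmd} (P , \Unit_A )$ has a strict representative --- already fails for the ordinary algebra $A = \K [x]$ and $P = \K$: the class of $\text{Ext}^1_A (\K , A) \neq 0$ has no strict representative, since a strict cocycle is an honest module map from $\K$ to a shift of $A$ and $\Hom_A (\K , A) = 0$. Your appeal to Corollary \ref{cor:prequalsfl} also goes the wrong way: it identifies $\ell^{\Hom (P, \_ )}$ with $\ell^{\_ \ts{} P}$, and says nothing about maps \emph{into} $P$. Relatedly, the contraction you want (``insert $e_A$'') exists on $\cbr \Unit_A$ precisely because the underlying module is $A$ itself; for a general source $P$ the complex $\cbr P$ is quasi-isomorphic to $P$, not contractible, so the homotopy you describe has no analogue there.

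Even once the direction is fixed, the remaining difficulty is not Koszul-sign bookkeeping. The unit-insertion homotopy $h_A$ on $\cbr \Unit_A$ satisfies $h_A b_A + b_A h_A = 1$ but is \emph{not} a comodule morphism, so it does not act directly on $\Hom_{A\mhyphen\cmd} (\Unit_A , M)$. The substance of the paper's proof is the quantitative control of this defect: $h_A \in Map^{1} (\cbr \Unit_A , \cbr \Unit_A )$, so that for $\phi \in \mcf^1 \Hom_{\cmd} (\Unit_A , M)$ the composite $b_\phi \circ h_A$ is a genuine morphism with $\partial (b_\phi h_A ) = b_\phi$ (Lemma \ref{lem:map1}); this is exactly why classes in $\mcf^1$ --- and only those --- are killed, which is what $\ell (\Unit_A ) = 0$ means. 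Your sketch never isolates this filtration-shift argument, and the ``cheap alternative'' via fibrancy of $\acbr A$ does not supply it either.
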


Before we begin the proof, we define a
weaker class of maps $ \text{Map}_{C}^{k} (P ,P^\prime)$ between two differential, free comodules of a coalgebra $C$. Given any map
of comodules $f: P \to P^\prime$, we take
\begin{equation*} [\Delta , f] := \Delta_{P^\prime} f -
(1_{C} \otimes f ) \Delta_P \end{equation*}
Note that
\begin{equation*} [\Delta , f g] = (1_{C^\prime} \otimes f \otimes 1_{C}) [\Delta
, g] + [\Delta , f] g
\end{equation*} For $k \geq 0$, define

\begin{equation*} Map^{k} (P, P^\prime ) = \left\{ f :  \text{image} ([\Delta , f])  \subset {C} \otimes
P^\prime_{[k]} \right\} \end{equation*}

These classes of maps will be useful when defining homotopies. Indeed, they naturally appear in the cobar
complex of morphisms from the cobar of $P$ to the cobar of
$P^\prime$ satisfying filtration properties on their differential in that
complex. A straightforward generalization of the above definition to bicomodules will also be used. We now record some basic properties.

\begin{lem}\label{lem:map1}  \item i) If $f \in Map^{k} (P, P^\prime )$, $g \in  \mcf^{i} \Hom (P^\prime ,
P^{\prime \prime})$ with $k \leq i$, then $g f \in
\mcf^{i - k} \Hom (P, P^{\prime \prime})$.

\item  ii) If $f \in Map^{k} (P, P^\prime )$
then $f (P_{[n]}) \subseteq P^\prime_{[n + k]}$.

\item iii) If $f \in Map^{k} (P, P^\prime )$, $g \in Map^{i} (P^\prime , P^{\prime \prime})$ then $g f \in Map^{k +i }
(P, P^{\prime \prime} )$.

\item iv) If $f \in Map^{k } (P, P^\prime ) $ then $\partial f
\in Map^{k} (P, P^\prime )$.

\end{lem}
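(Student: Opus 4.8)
The plan is to verify each of the four claims directly from the definitions of $Map^k$ and of the filtration $\mcf^i \Hom$, using the Leibniz-type identity for $[\Delta, -]$ recorded just before the statement and the multiplicativity of the length filtration under composition (Proposition on enrichment over $\Z^{r+s}$-lattice filtered complexes). The key observation tying everything together is that an element $f \in Map^k(P, P')$ is, roughly, a comodule map up to a ``correction of length $k$'': the commutator $[\Delta, f]$ lands in $C \otimes P'_{[k]}$ rather than being zero. Consequently $f$ raises the length filtration by at most $k$ (this is essentially (ii)), and composing such maps adds these defects (this is (iii)). I would carry out the parts in the order (ii), (iii), (i), (iv), since (ii) is the cleanest and the identity for $[\Delta, fg]$ makes (iii) and then (iv) fall out, while (i) is a hybrid statement mixing $Map^k$ with the honest filtration $\mcf^i\Hom$.

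For (ii), I would argue by the universal property of the cofree comodule $\cbr P'$: a map $f$ is determined by $\pi \circ f$ together with $[\Delta, f]$, where $\pi$ projects onto the cogenerators. Since $\pi \circ f$ lowers nothing and the corrective term $[\Delta, f]$ is controlled to land in $C \otimes P'_{[k]}$, unwinding the recursion (iterating $\Delta$ and using $(T V)_n$-filtration additivity from Section 2.2) shows $f(P_{[n]}) \subseteq P'_{[n+k]}$; the shift by exactly $k$ comes from the single application of the defect at each cogenerator. For (iii), I would use the displayed identity
\begin{equation*} [\Delta, gf] = (1 \otimes g \otimes 1)[\Delta, f] + [\Delta, g] f. \end{equation*}
The first summand has image in $(1 \otimes g \otimes 1)(C \otimes P'_{[k]}) \subseteq C \otimes P''_{[k+i]}$ by part (ii) applied to $g$; the second has image in $([\Delta, g])(P) \subseteq$ (image landing in $C \otimes P''_{[i]}$), but precomposing with $f$ and re-running the length bookkeeping via (ii) pushes this into $C \otimes P''_{[k+i]}$ as well. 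Hence $gf \in Map^{k+i}(P, P'')$. Part (i) is the analogous computation where now $g$ is an honest morphism in $\mcf^i\Hom$, i.e. $[\Delta, g]$ carries an extra degree of filtration rigidity; the bound $k \le i$ is exactly what is needed for the defect of $f$ to be absorbed, yielding $gf \in \mcf^{i-k}\Hom$. For (iv), I would note that $\partial f = d_{P'} f \pm f d_P$ and that $d_P, d_{P'}$ are comodule coderivations, so $[\Delta, d_{P'} f] $ and $[\Delta, f d_P]$ are computed from $[\Delta, f]$ by applying (co)derivations that preserve the $P'_{[k]}$-filtration (the differentials $b_P$ respect the length filtration, as established in Section 2.3); therefore $\partial f$ still has $[\Delta, \partial f]$ landing in $C \otimes P'_{[k]}$.

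The main obstacle I anticipate is being careful in (ii) and (iv) about how the internal length filtration $P'_{[k]}$ (the cogenerator-side filtration) interacts with the bar-length filtration on $\cbr P'$ and with the differentials: one must check that the coderivation $b_{P'}$ does not increase the $[k]$-index, which is where the definition of $b_P$ via $\mu_P$ composed with $\Delta_P$ plus $d'_P$ matters, together with the polymodule unitality. Once that compatibility is pinned down, the rest is bookkeeping with $\gamma$-signs, which I would suppress. The signs in $\partial f = df - (-1)^{|f|}\ldots$ do not affect filtration statements, so (iv) reduces purely to the filtration-preservation of the differential.
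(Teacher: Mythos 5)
The paper records this lemma as a list of ``basic properties'' and supplies no proof at all, so there is no argument of the authors' to compare against; judged on its own, your outline is essentially viable, and the ingredients you name (the Leibniz identity for $[\Delta,\cdot\,]$, cofreeness plus length bookkeeping for (ii), filtration-preservation of the bar differentials for (iv)) are the right ones. Two points, however, should be tightened. For (i), the argument is more elementary than your description suggests: once (ii) is known, (i) follows directly from the definition of $\mcf^{i}$ as vanishing on a filtration piece --- if $p$ lies in the piece of $P$ on which membership in $\mcf^{i-k}$ is tested, then $f(p)$ lies, by (ii) and $k\leq i$, in the piece of $P^{\prime}$ on which $g$ vanishes, so $gf$ vanishes there. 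No appeal to ``$[\Delta,g]$ rigidity'' is needed (indeed $g$ is a module premorphism, not a comodule map, so $[\Delta,g]$ is not even defined unless you pass to $b_g$). Similarly, in (iii) the second term needs no re-run of (ii): the image of $[\Delta,g]\circ f$ is contained in the image of $[\Delta,g]$, hence already in $C\otimes P^{\prime\prime}_{[i]}\subseteq C\otimes P^{\prime\prime}_{[i+k]}$.

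The step that would fail if executed literally as written is (iv). Applying the Leibniz identity with $g=d_{P^{\prime}}$ gives $[\Delta,d_{P^{\prime}}f]=[\Delta,d_{P^{\prime}}]f+(1\otimes d_{P^{\prime}})[\Delta,f]$, and $[\Delta,d_{P^{\prime}}]=(d_C\otimes 1)\Delta_{P^{\prime}}$ is not small: the individual commutators $[\Delta,d_{P^{\prime}}f]$ and $[\Delta,fd_P]$ are \emph{not} ``computed from $[\Delta,f]$ by filtration-preserving operators'' and are not contained in $C\otimes P^{\prime}_{[k]}$. What is true is that the offending terms of type $(d_C\otimes f)\Delta_P$ cancel in the combination $\partial f=d_{P^{\prime}}f-(-1)^{|f|}fd_P$. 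Equivalently, and more cleanly: because $\Delta_P$ and $\Delta_{P^{\prime}}$ are chain maps (this is exactly the coderivation property you invoke), $[\Delta,\cdot\,]$ intertwines $\partial$ with the differential $d_C\otimes 1+1\otimes d_{P^{\prime}}$ on maps $P\to C\otimes P^{\prime}$, so that $[\Delta,\partial f]=(d_C\otimes 1+1\otimes d_{P^{\prime}})\circ[\Delta,f]-(-1)^{|f|}[\Delta,f]\circ d_P$, and each of these three terms visibly lands in $C\otimes P^{\prime}_{[k]}$ because the bar differential preserves the length filtration and precomposition cannot enlarge an image. With that repair, and with the indexing convention for $P_{[n]}$ fixed consistently with the paper's assertion that $h_A\in Map^{1}$, your plan for (ii), and hence for all four parts, goes through.
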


We proceed with the proof of Lemma \ref{lem:basic}.

\begin{proof} In order to prove this lemma, we define a homotopy
contraction \begin{equation*} h_A : \br \Unit_A \to \br \Unit_A \end{equation*} as $\sum_{m = 1}^{\infty}
1^{\otimes m} \otimes \eta$ where $\eta$ is the insertion of the
identity. More concretely,

\begin{equation*} h_A ([a_1| \cdots |a_m]) = (-1)^{|a_1| + \cdots + |a_m|}[ a_1| \cdots |a_m|e]
\end{equation*} where $|a_i |$ is the degree of $a_i$ in $A[1]$. A quick
computation shows that indeed

\begin{equation*} h_A b_A + b_A h_A = 1  \end{equation*}

so that $h_A$ is a vector space contracting homotopy of $\cbr \Unit_A$.

Note that $h_A$ is not a $\acbr A$-comodule morphism of $\cbr \Unit_A$
(otherwise, the entire category $A\mhyphen \cmd$ would be zero). Indeed,
we have, for any $a \in \cbr \Unit_A$, \begin{equation*} (\Delta h_A - (1
\otimes h_A ) \Delta )(a) = (-1)^{|a |} a \otimes [e]
\end{equation*}

This implies that $h_A \in Map_{\acbr A}^{1} (\cbr \Unit_A, \cbr \Unit_A)$.
By \ref{lem:map1}, we have that if $\phi \in \mcf^1 \Hom_{\cmd} (\Unit_A,
M)$ then $b_\phi \circ h_A \in Map_{\acbr A}^0 (\cbr \Unit_A, \cbr \Unit_A)$ is
a comodule morphism. Thus, if $\phi \in \mcf^1 \Hom_{\cmd} (A, M)$
is a homomorphism, then $\partial (b_\phi h_A) = b_\phi
\partial h_A = b_\phi$ implying that it is a boundary and
therefore $F^1 \Hom (\Unit_A , M) = 0$.
\end{proof}

Applying this lemma yields the following corollary.

\begin{cor} For any $A_\infty$-algebra $A$, $\mcg_A = F^1$. \end{cor}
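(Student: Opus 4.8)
The plan is to deduce the corollary from a single long exact sequence that compares $\Hom_{\cmd}$ with the naive cochain Hom of underlying complexes through the ``leading term'' map. Recall that $\mcg_A(P,P')$ consists of the homotopy classes $[\phi]\colon P\to P'$ in $A\mhyphen\pcmd$ inducing the zero map $H^*(P)\to H^*(P')$, and that $F^1(P,P')$ is the image of $H^*(\mcf^1\Hom_{\cmd}(P,P'))$ in $H^*(\Hom_{\cmd}(P,P'))$, where $\mcf^1\Hom_{\cmd}(P,P')$ is the subcomplex of morphisms whose leading term $\phi^0:=\phi|_{\cbr_{\mathbf{0}}P}$ vanishes.

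First I would record the easy inclusion $F^1\subseteq\mcg_A$: the map a morphism induces on cohomology only sees its leading term, so $\phi^0=0$ gives $H^*(\phi)=0$; and restricting the boundary formula (\ref{eq:polymor}) to $\cbr_{\mathbf{0}}P$ shows that the leading term of any boundary $\partial\psi$ is a coboundary in $\Hom_{\co}(P,P')$, so $F^1$ is well defined on homotopy classes. For the reverse inclusion I would introduce the leading-term map $L\colon\Hom_{\cmd}(P,P')\to\Hom_{\co}(P,P')$, $L(\phi)=\phi^0$, where on the right $P$ and $P'$ are given the underlying differentials $\mu_P|_{\cbr_{\mathbf{0}}P}$ and $\mu_{P'}|_{\cbr_{\mathbf{0}}P'}$. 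The same restriction computation shows $L$ is a chain map; it is surjective, because any graded map $P\to P'$ is the leading term of the morphism with all higher components zero (for which the unital conditions hold vacuously); and $\ker L=\mcf^1\Hom_{\cmd}(P,P')$ by construction. Thus one obtains a short exact sequence of complexes
\begin{equation*}0\longrightarrow\mcf^1\Hom_{\cmd}(P,P')\longrightarrow\Hom_{\cmd}(P,P')\xrightarrow{\ L\ }\Hom_{\co}(P,P')\longrightarrow 0.\end{equation*}

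Passing to the long exact cohomology sequence finishes the argument: since $\K$ is a field, $H^*(\Hom_{\co}(P,P'))\cong\Hom_{\gr}(H^*P,H^*P')$ and $H^*(L)$ sends $[\phi]$ to the induced map $H^*(\phi)$, so $\ker H^*(L)=\mcg_A(P,P')$, and exactness at $H^*(\Hom_{\cmd}(P,P'))$ yields $\mcg_A(P,P')=\operatorname{im}\!\big(H^*(\mcf^1\Hom_{\cmd}(P,P'))\to H^*(\Hom_{\cmd}(P,P'))\big)=F^1(P,P')$. I expect the only genuinely non-formal point to be the identification $H^*(\Hom_{\co}(P,P'))\cong\Hom_{\gr}(H^*P,H^*P')$, i.e. that over a field the leading-term map already computes the derived Hom of the underlying complexes; if one prefers to stay closer to the preceding lemma, the same conclusion follows by dévissage along the cone presentation of a perfect module, with Lemma \ref{lem:basic} providing the base case that $L$ is a quasi-isomorphism on the free module $\Unit_A$ and the five lemma applied to the sequence above carrying the identification over to all of $A\mhyphen\pcmd$ — the one thing to watch being the bookkeeping of the length index under the cone construction.
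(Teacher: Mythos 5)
Your core argument is correct, and it takes a genuinely different route from the paper's. The paper deduces this corollary from the explicit contracting homotopy $h_A$ of $\cbr \Unit_A$ constructed for Lemma \ref{lem:basic}: that lemma gives $F^1 \Hom_{\cmd} (\Unit_A , N) = 0$, which yields $F^1 \subseteq \mcg_A$, and the converse comes from the natural projection $\Hom_{\cmd}(\Unit_A, K) \to \Hom_{\cmd}(\Unit_A, K)/\mcf^1 \Hom_{\cmd}(\Unit_A, K)$ together with a leading-term correction by a boundary. You instead package both inclusions into the short exact sequence of complexes $0 \to \mcf^1\Hom_{\cmd}(P,P') \to \Hom_{\cmd}(P,P') \to \Hom_{\co}(P,P') \to 0$ — your verifications are right: restricting the differential \ref{eq:polymor} to the length-zero piece shows the leading-term map is a chain map, strictification gives surjectivity (the unital conditions are vacuous there), and the kernel is $\mcf^1$ by definition — and then use that over the field $\K$ one has $H^*(\Hom_\co(P,P')) \cong \Hom_\gr(H^*P, H^*P')$, so the long exact sequence identifies $\ker H^*(L)$ with the image of $H^*(\mcf^1\Hom_{\cmd})$, i.e. $F^1$. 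This is a clean, homotopy-free argument that handles both directions at once; the paper's explicit contraction is more hands-on but is reused later (in the proof of Theorem \ref{thm:main1}), so it is not wasted effort there.

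The one point you must make explicit is your opening "recall": in this paper $\mcg_A$ is not defined as the classes inducing zero on $H^*(P) \to H^*(P')$, but as the kernel of $\text{ev}$ at the generator, i.e. classes $[\phi]$ for which the induced map $H^*\Hom_{\cmd}(\Unit_A, P) \to H^*\Hom_{\cmd}(\Unit_A, P')$ vanishes. Bridging the two descriptions needs the natural identification $H^*\Hom_{\cmd}(\Unit_A, M) \cong H^*(M)$, under which postcomposition with $\phi$ becomes $H^*(\phi^0)$; that is precisely what the paper extracts from the homotopy retract of Lemma \ref{lem:basic}, and it does not follow from your exact sequence alone (applied to $P = \Unit_A$ it only identifies $H^*\Hom_\co(\Unit_A, M)$ with $\Hom_\gr(H^*A, H^*M)$, not with $H^*(M)$). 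So either cite or prove this corepresentability statement, or keep Lemma \ref{lem:basic} as an input; with that bridge in place your proof is complete.
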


\begin{proof} Clearly, if $ \phi : M \to N$ is in $\mcf^1$, then $\phi_*
:\Hom_{\cmd} (A, M) \to \mcf^1 \Hom (A, N)$ so $[\phi ]_* = 0$.
Conversely, using the homotopy retract above, one sees that that
there is a map natural with respect to $K$
\begin{eqnarray*} \Hom_\cmd (A, K) & \to & \Hom_\cmd (A, K) / \mcf^1
\Hom_\cmd (A, K) \end{eqnarray*} which induces a natural inclusion
\begin{eqnarray*} \Hom (A, K)  & \hookrightarrow & \Hom_{mod} (H (A), H(K))
\\ & \simeq & H (K)
\end{eqnarray*} Thus if $[\phi]_* = 0$ then $[\phi^0] = 0$ implying
$[\phi] \in F^1 \Hom (M, N)$. \end{proof}

Owing to the compatibility of the length filtration with
multiplication, we also easily obtain.

\begin{cor} For all $r$ we have $\mcg_A^r \subseteq F^r$.
\end{cor}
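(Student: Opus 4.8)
The strategy is to bootstrap from the preceding corollary, which identifies the $A$-ghost ideal $\mcg_A$ with the first step $F^1$ of the length filtration on $\Hom$-complexes, and then to propagate this to powers using the compatibility of the length filtration with composition. Recall that $\mcg_A^r$, by definition, consists of those morphisms in $H^0(\cmd)$ (or $H^*$, as appropriate) that can be written as an $r$-fold composite $[\phi_r]\circ\cdots\circ[\phi_1]$ with each $[\phi_i] \in \mcg_A$. By the previous corollary, each factor lifts to a homomorphism $\phi_i \in \mcf^1 \Hom_{\cmd}(P_{i-1}, P_i)$.

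The key input is the enrichment statement proved earlier: the category $\moduledata\mhyphen\cmd$ is enriched over $\Z^{r+s}$-lattice filtered complexes, so in particular morphism composition respects the total filtration. Concretely, for the single-algebra case this says that composition carries $\mcf^i \Hom(P', P'') \times \mcf^j \Hom(P, P')$ into $\mcf^{i+j} \Hom(P, P'')$. Iterating this bound on the lift $\phi_r \circ \cdots \circ \phi_1$ with each $\phi_i \in \mcf^1$, the composite lands in $\mcf^r \Hom(P_0, P_r)$. Passing to cohomology, one obtains $[\phi_r \circ\cdots\circ\phi_1] \in F^r \Hom(P_0, P_r)$, which is exactly what is required; since $\mcg_A^r$ is generated by such composites and $F^r$ is closed under the relevant operations (sums, shifts), the inclusion follows.

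The main subtlety — and the only real work — is the bookkeeping that a \emph{cohomology-level} $r$-fold composite in $\mcg_A^r$ actually admits a chain-level representative as a genuine $r$-fold composite of homomorphisms each lying in $\mcf^1$. One must verify that the isomorphism $\mcg_A \cong F^1$ from the previous corollary can be applied factor-by-factor and that the composition formula in $\cmd$ (the one given via $\Delta_P$ in the definition of the \dg category) is the operation whose filtration behavior is governed by the enrichment proposition. Both points are immediate from the definitions, which is why the authors flag this only as an ``easily obtained'' corollary; no delicate homotopy-theoretic argument is needed beyond what the enrichment already supplies.
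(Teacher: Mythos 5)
Your argument is correct and is exactly the one the paper intends: it invokes the preceding corollary $\mcg_A = F^1$ and the enrichment proposition (composition respects the length filtration, so $\mcf^i \circ \mcf^j \subseteq \mcf^{i+j}$), choosing chain-level representatives in $\mcf^1$ for each ghost factor and passing back to cohomology. The paper compresses this to the remark ``owing to the compatibility of the length filtration with multiplication,'' so your writeup simply makes the same reasoning explicit.
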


The following theorem asserts that this inclusion is an equality.

\begin{thm}\label{thm:main1} For any $A_\infty$-algebra $A$, $\mcg_A^r = F^r$. \end{thm}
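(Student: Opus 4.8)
The plan is to establish the reverse inclusion $F^r \subseteq \mcg_A^r$ by induction on $r$, the base case $r=1$ having been handled by the preceding corollary. The key structural input is the homotopy contraction $h_A$ built in the proof of Lemma \ref{lem:basic}, together with the filtration bookkeeping recorded in Lemma \ref{lem:map1}. Concretely, suppose $\phi \in \Hom_\cmd(M, N)$ represents a class lying in $F^r \Hom(M, N)$, i.e.\ $\phi|_{\cbr_{-\gamma} M} = 0$ for $\gamma < r$ (in the single-algebra case $\gamma \in \Z$). Using a finitely generated projective resolution of $M$ adapted to the length filtration --- or, more efficiently, working directly with the cofree comodule $\cbr M$ and the partial contractions induced by $h_A$ on the bar construction --- I would factor $b_\phi$ as a composite of $r$ comodule maps each of which raises the length filtration by exactly one. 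The point is that the bar differential $b_M$ strictly increases the length grading, so on the associated graded the map $b_\phi$ decomposes, and the contracting homotopy lets one "peel off" one unit of filtration at a time while staying within comodule morphisms up to the error terms controlled by $Map^k$.

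The cleaner way to organize this is probably the following. By Lemma \ref{lem:basic} and its corollaries, the ghost ideal $\mcg_A$ equals $F^1$, and one knows abstractly (from the Ghost Lemma and the compatibility of $F^\bullet$ with composition, Proposition on enrichment over lattice filtered complexes) that $\mcg_A^r \subseteq F^r$. For the converse, I would argue that any $\phi$ with $[\phi] \in F^r$ can be written, up to homotopy, as a product $\psi_r \cdots \psi_1$ of morphisms with each $[\psi_i] \in F^1 = \mcg_A$. To produce the $\psi_i$, resolve $M$ by a complex whose terms are finite sums of shifts of $\Unit_A$; then $\phi$ lifts to a map of complexes, and the hypothesis that $\phi$ kills the $\leq (r-1)$-st filtered piece translates, via the length filtration on the bar resolution, into the statement that this lift can be chosen supported in filtration degrees $\geq r$. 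Splitting the telescope of this complex into $r$ "stages," each inclusion-of-one-cone-step contributes a ghost map, and the composite recovers $[\phi]$. This is exactly the pretriangulated analogue of the classical fact that an element of the $r$-th power of an ideal factors through $r$ successive extensions.

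The main obstacle I anticipate is the passage from the \emph{cochain-level} filtration statement ($\phi|_{\cbr_{-\gamma}M}=0$) to an \emph{honest factorization in the homotopy category} $H^0(\cmd)$ through $r$ ghost maps. At the chain level one only gets that $b_\phi$ agrees with something filtration-raising up to a boundary, and the boundary terms --- governed precisely by the $Map^k$ classes and the failure of $h_A$ to be a comodule map (the explicit error $(-1)^{|a|} a \otimes [e]$) --- must be absorbed without losing filtration degree. Making this precise requires a careful induction in which the inductive hypothesis is not just "$[\phi]$ factors" but "$\phi$ is chain-homotopic, through maps of bounded filtration defect, to a genuine filtration-$r$-raising comodule map," so that Lemma \ref{lem:map1}(i) and (iv) can be applied at each step to keep the homotopies inside the correct filtered subcomplex. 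I would also need the compactness of $A$ (as in the Ghost Lemma) to guarantee the resolutions are finite and the whole argument terminates; without it the telescope need not stabilize. Once the factorization is set up, the identification $\mcg_A^r = F^r$ follows by combining it with the already-established $\mcg_A = F^1$ and the multiplicativity of $F^\bullet$.
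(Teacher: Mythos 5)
You correctly isolate the inductive target --- a class $[\phi]\in F^r\Hom_\cmd(M,N)$ should factor, up to homotopy, as a ghost followed by something in $F^{r-1}$ --- and you even name the crux yourself: passing from the cochain-level vanishing $\phi|_{\cbr_{r-1}M}=0$ to an honest factorization through an \emph{object} of $H^0(\cmd)$. But the proposal never supplies the construction that overcomes this. In the paper the intermediate object is canonical: one replaces $M$ by $\Diagonal_A \ts{} M$ via the unit quasi-isomorphism $\epsilon_{l,M}$ and projects to the filtered quotient $\Diagonal_A \odot_1 M$; the composite $\pi$ is a ghost because $\Diagonal_A \ts{} M$ has filtration length $1$, by an application of Lemma \ref{lem:lift}. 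The induced map $\psi$ on the quotient (the restriction of $\xi_{l,M}\circ(1\otimes\phi)$) is then pushed into $F^{r-1}$ by an explicit homotopy built not from the contraction $h_A$ of $\cbr\Unit_A$ in Lemma \ref{lem:basic}, but from the two homotopies $h^\pm_{diag}$ of the \emph{diagonal bimodule}, the translation maps $\tau_\pm$, the relation $\partial\sigma^-_{r-1}=1-\tau_-^{r-1}$, and the key vanishing $\tau_-^{r-1}(\cbn{[r-1,0]}\Diagonal_A)=0$. None of this bimodule-level machinery appears in your sketch, and factoring the comodule map $b_\phi$ "one filtration step at a time" does not by itself produce modules $K$ and morphisms $\pi,\psi$ in the category, which is what membership in $\mcg_A^r$ requires.

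Your alternative route --- resolve $M$ by finite sums of shifts of $\Unit_A$ and split the telescope into $r$ ghost stages --- has two further problems. First, you invoke compactness of $A$ to make the resolution finite, but the theorem is asserted for an arbitrary $A_\infty$-algebra; compactness enters only afterwards, when the result is combined with the Ghost Lemma, so your argument would prove a weaker statement. Second, the pivotal step, "the hypothesis $\phi\in F^r$ translates into a lift supported in resolution stages $\geq r$, and each cone step contributes a ghost," is essentially the theorem itself in disguise: for an arbitrary resolution there is no a priori identification of the stage filtration of the telescope with the bar-length filtration $F^\bullet$, and establishing such a comparison is precisely the content one must prove. So the proposal records the right strategy, the right base case $\mcg_A=F^1$, and the right obstacle, but the decisive step --- producing the ghost factorization $\phi\simeq\psi\circ\pi$ with control on $\psi$ --- is left unproved, whereas the paper achieves it with the diagonal-bimodule homotopies and Lemma \ref{lem:lift}.
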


\begin{proof} We start this proof by writing down two homotopies of the diagonal $(A, A)$-bimodule

\begin{equation*} h^\pm_{diag} : \cbr A \to \cbr A \end{equation*}

where \begin{eqnarray*} h_{diag}^+ ([\mba | a | \mba^\prime] ) & =
& (-1)^{|\mba| + |a|} [[\mba |a]|e | \mba^\prime ] \\ h_{diag}^-
([\mba | a | \mba^\prime] ) & = & (-1)^{|\mba| } [\mba |e |[a|
\mba^\prime] ]
\end{eqnarray*}

While these maps fail to be bi-comodule morphisms, they do lie in
$Map^{1, 1} (\Diagonal_A , \Diagonal_A )$. Indeed, we have

\begin{equation*} [\Delta , h_{diag}^+] ([\mba | a | \mba^\prime]) = [\mba | a] \otimes [e] \otimes
[\mba^\prime ] \end{equation*}  and \begin{equation*} [\Delta ,
h_{diag}^- ] ([\mba | a | \mba^\prime ]) = [\mba ] \otimes [e]
\otimes [a | \mba^\prime ]. \end{equation*}

Furthermore, letting $\tau^\pm$ be the translation maps
\begin{eqnarray*} \tau_+ ( [\mba|a|[a_1^\prime |\cdots
|a_m^\prime]) & = & (-1)^{1 + |\mba| +
|a|}[[\mba |a]|a_1^\prime |[a_2^\prime| \cdots |a_m^\prime]] , \\
\tau_- ( [[a_1| \cdots|a_n]|a|[\mba^\prime]) & = & (-1)^{1 + |a_1|
+ \cdots + |a_{n - 1}|} [[a_1| \cdots|a_{n -
1}]|a_n|[a|\mba^\prime]] ,
\end{eqnarray*} our homotopies bound to

\begin{eqnarray*} \partial h_{diag}^\pm & = & 1 - \tau_\pm  \end{eqnarray*}

More generally, we have

\begin{eqnarray*} \partial \left[ h_{diag}^\pm ( 1 + \tau_\pm + \tau_\pm^2 + \cdots + \tau_\pm^{k - 1} ) \right] & = & 1 - \tau_\pm^{k }, \end{eqnarray*} and by \ref{lem:map1},

\begin{eqnarray*} \sigma^\pm_k :=  h_{diag}^\pm ( 1 + \tau_\pm + \tau_\pm^2 + \cdots + \tau_\pm^{k - 1}
) \in Map^{k,0}_{(\acbr A , \acbr A)} (\cbr \Diagonal_A,
\cbr \Diagonal_A)  \end{eqnarray*}

One observes that for any $l$, as a map in $\co$ the translation
map satisfies
\begin{equation} \label{eq:trans} \tau_-^k (\cbn{[k , l]} \Diagonal_A ) = 0 \end{equation}

 We now use induction to prove our theorem. It suffices to show that if
 $\phi \in \mcf^r \Hom_{\cmd} (M, N)$, then there exists a module $K$ and homomorphisms $\pi : M \to K$, $\psi : K \to N$ such that
 $\pi \in F^1$, $\psi \in F^{r - 1}$ and $\phi = \psi \circ \pi$. We consider the diagram below which is commutative up to homotopy.
\begin{figure}[h] \begin{tikzcd} M \arrow{r}{\epsilon_{l, M}} \arrow{d}[swap]{\phi} \arrow{rd}{\pi} & \Diagonal_A \ts{} M \arrow{d}[swap]{\pi_0} \\
N & \Diagonal_A \odot_1 M \arrow{l}{\psi}
\end{tikzcd}
\end{figure}
The map $\epsilon_{l, M}$ was defined in equation \ref{eq:units}
and is a quasi-isomorphism. In particular, a simple examination of
the map shows that $\Diagonal_A \ts{} M$ has length $1$ as a filtered
module. Thus, a basic application of \ref{lem:lift} implies that
$\pi \in F^1$.

On the other hand, as $\psi$ is the restriction of $\xi_{l, M}
\circ (1 \otimes \phi  )$, we can write it out concretely. It is a
strict map whose restriction to $A \otimes A[1]^{\otimes n}
\otimes M$ is
\begin{equation*} \psi^0_n ([a | a_1 | \cdots | a_n | m]) = \sum_{i = 0}^n (-1)^{|a_1| + \cdots + |a_i|} \mu_N^{i + 1} ([a | a_1| \cdots |a_{i} |  \phi^{n - i}([a_i| \cdots |a_n |m])] ) \end{equation*} 
for $n > 1$. As $\phi \in F^r$, we see in
particular that $\psi^0_n = 0$ for $n \leq r$. Thus $\psi $
factors as a composition 
\begin{equation*} A \odot_1 M
\stackrel{\pi }{\longrightarrow} A \odot_r M
\stackrel{\tilde{\psi} }{\longrightarrow} N
\end{equation*} 
where $\tilde{\psi}$ is a strict homomorphism. Now, a direct calculation shows
that $\sigma_{r - 1} \otimes 1_M : \cbr \Diagonal_A
\otimes_{co} \cbr M \to \cbr \Diagonal_A \otimes_{co} \cbr M$ restricts to a
well defined $A_\infty$-module morphism
\begin{equation*} \sigma^-_{r - 1} \odot_1 1_M : A \odot_1 M
\to A \odot_r M . \end{equation*} Composing with $\tilde{\psi}$ and
applying the differential gives \begin{eqnarray*} \partial [
(-1)^{|\psi |} \tilde{\psi} \circ  (\sigma^-_{r - 1} \odot_1 1_M )
] & = & \tilde{\psi} \circ  ((\partial \sigma^-_{r - 1} ) \odot_1
1_M ) \\ & = & \tilde{\psi} \circ ((1_A - \tau_-^{r - 1} ) \odot_1
1_M )
\\ & = & \tilde{\psi} \circ (1_A \odot_1 1_M ) - \tilde{\psi }
\circ (\tau_-^{r - 1} \odot_1 1_M ) \\ & = & \tilde{\psi} \circ
\pi - \tilde{\psi } \circ (\tau_-^{r - 1} \odot_1 1_M ) \\ & = &
\psi - \tilde{\psi } \circ (\tau_-^{r - 1} \odot_1 1_M ).
\end{eqnarray*} Thus $\psi$ is cohomologous to $\tilde{\psi } \circ (\tau_-^{r - 1}
\odot_1 1_M )$. Yet by \ref{eq:trans} we have that
\begin{equation*} ( \tau_-^{r - 1} \odot_1 1_M ) \left(\cbn{[r -
1, 0]} \Diagonal_A \otimes_{co} \cbr M \right) = 0 \end{equation*} and since
$\tilde{\psi}$ is strict, this implies that \begin{equation*}
\tilde{\psi } \circ ( \tau_-^{r - 1} \odot_1 1_M ) \left(\cbn{[r -
1, 0]} \Diagonal_A \otimes_{co} \cbr M \right) = 0 \end{equation*} Thus,
$\psi \simeq \tilde{\psi } \circ ( \tau_-^r \odot_1 1_M ) \in F^{r
- 1}$. \end{proof}
Combining this theorem with the Ghost Lemma of the previous
section, we have the following homological criteria for generation
time. 

\begin{cor} Given an $A_\infty$-algebra $A$, the generation time of an $A_\infty$-module $\Unit_A$ in
$H^0 (A\mhyphen \cmd)$ is the global length $\ell (A\mhyphen \pcmd )$. \end{cor}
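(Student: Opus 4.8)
The plan is to combine Theorem \ref{thm:main1} (which identifies the ghost ideal powers $\mcg_A^r$ with the length filtration pieces $F^r$) with the Ghost Lemma from the previous subsection, and then to translate the resulting statement into the language of length. First I would unwind the definition of generation time: by definition, $\gt(\Unit_A)$ in $H^0(A\mhyphen\cmd)$ is the smallest $d$ such that $H^0(A\mhyphen\cmd) = \cl{\Unit_A^{\diamond(d-1)}}$. Equivalently, $\gt(\Unit_A) = 1 + \sup_M \lvl_{\Unit_A}(M)$ where the supremum ranges over all perfect $A$-modules $M$ and $\lvl_{\Unit_A}(M)$ is the level in the sense of Rouquier. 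The Ghost Lemma says precisely that $\lvl_{\Unit_A}(M) > d$ (i.e.\ $M \not\in \cl{\Unit_A^{\diamond(d)}}$ but $M$ is built in finitely many steps) if and only if there exists $N$ with $\mcg_A^d \Hom^*(M,N) \neq 0$, under the compactness hypothesis on $A_G$; here $A = A_G$ is compact since $\mct$ is strongly generated and $H^*(A_G)$ is finite dimensional, so the hypothesis is available.

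The key step is then the chain of identifications. By Theorem \ref{thm:main1}, $\mcg_A^r = F^r$ as subcomplexes of $\Hom_{\cmd}(M,N)$, so $\mcg_A^d \Hom^*(M,N) \neq 0$ in cohomology is the same as saying the induced filtration on $H^*(\Hom^*(M,N))$ has its $d$-th piece nonzero, i.e.\ $\ell_+(\yoneda{N}{M}) \geq d+1$ in the notation of equations \eqref{eq:length}, where I am reading the length filtration as the one induced on $\Hom$ via the bar construction. Chasing the quantifiers, the largest $r$ for which some $M, N$ realize $\mcg_A^r\Hom^*(M,N)\neq 0$ is exactly $\ell^{\Hom(\_,\_)}$ over all perfect modules, which by Corollary \ref{cor:prequalsfl} and the Definition following it equals $\ell(A\mhyphen\pcmd)$. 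Putting this together: $\sup_M \lvl_{\Unit_A}(M) = \ell(A\mhyphen\pcmd)$, hence $\gt(\Unit_A) = \ell(A\mhyphen\pcmd)$ once one checks the off-by-one bookkeeping between the ``$d$-step'' indexing of $\diamond$ and the indexing of the filtration $F^\bullet$ matches the convention in which $\ell(\Unit_A) = 0$ (Lemma \ref{lem:basic}); note $\Unit_A$ is the zero-step generator and it has length zero, which pins the normalization.

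The main obstacle I anticipate is the careful matching of indices and the passage between the triangulated-level statement (levels, $\diamond$-powers) and the pretriangulated filtered statement. The Ghost Lemma is phrased for an algebraic triangulated category $\mct$ with its generator $G$, whereas Theorem \ref{thm:main1} lives in $A\mhyphen\cmd$; one must invoke Theorem \ref{thm:LF} to identify $\mct \simeq H^0(A_G\mhyphen\pcmd)$ and check that under this equivalence $G$ corresponds to $\Unit_{A_G}$ and the $G$-ghost ideal corresponds to $\mcg_{A_G}$ (this is essentially the definition of $\text{ev}_G$ and the $G$-ghost ideal given earlier, so it should be routine but needs stating). Also one should confirm that taking the supremum of $\ell_+$ over all $N$ for fixed $M$, and then over all perfect $M$, genuinely recovers $\ell(A\mhyphen\pcmd)$ rather than some variant using $\ell_-$ or $|\ell_+|$; since all the relevant filtrations here are increasing and supported in nonnegative degrees (the length filtration starts at $F^0 = $ everything), $\ell_- $ contributes nothing and $\ell = \ell_+$, so this is harmless but worth a sentence. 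Once these bookkeeping points are settled, the corollary is immediate from Theorem \ref{thm:main1} and the Ghost Lemma.
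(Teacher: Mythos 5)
Your proposal follows exactly the route the paper takes: the corollary is stated there with no more justification than ``combining this theorem with the Ghost Lemma,'' and your chain --- Theorem \ref{thm:main1} identifying $\mcg_A^r$ with $F^r$, the Ghost Lemma converting nonvanishing ghost powers into levels, Theorem \ref{thm:LF} to match $G$ with $\Unit_{A_G}$ and the $G$-ghost ideal with $\mcg_{A_G}$, and the definition of global length via Corollary \ref{cor:prequalsfl} --- is precisely the intended argument, spelled out in more detail than the paper itself provides. Two caveats. First, your parenthetical claim that compactness of $A_G$ is automatic (``$A=A_G$ is compact since $\mct$ is strongly generated and $H^*(A_G)$ is finite dimensional'') is false as a deduction: strong generation does not force $H^*(A)$ to be finite dimensional (e.g.\ $A=\K[x]$ strongly generates its perfect modules in one step). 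Compactness is a hypothesis the Ghost Lemma genuinely requires and which the corollary, as stated in the paper, leaves tacit; you should state it as an assumption rather than derive it. Second, the identity $\gt(\Unit_A)=1+\sup_M \lvl_{\Unit_A}(M)$ does not match the paper's conventions, under which $\gt(G)=\sup_M \lvl_G(M)$ directly; you do flag the off-by-one bookkeeping and anchor it correctly with $\ell(\Unit_A)=0$ from Lemma \ref{lem:basic}, so this is a normalization slip rather than a structural error, but the final matching of indices between the $\diamond$-powers in the Ghost Lemma and the filtration degree in $F^\bullet$ is exactly where that sentence would need to be repaired.
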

Coupling this to the theory of enhanced triangulated categories, we also obtain the corollary below.

\begin{cor} If $\mca$ is a pretriangulated $A_\infty$-category and
$G \in \mca$ is a generator, then $t (G) = \ell (A_G\mhyphen \cmd )$. \end{cor}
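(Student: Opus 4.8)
The plan is to transport the corollary just above --- that the generation time of $\Unit_{A_G}$ in $H^0(A_G\mhyphen\pcmd)$ equals the global length $\ell(A_G\mhyphen\pcmd)$ --- across the Lef\`evre--Hasegawa equivalence of Theorem~\ref{thm:LF}. Throughout we take $G$ to be a strong generator of $H^0(\mca)$; if it is not, then $t(G) = \infty$ and both sides of the asserted equality are infinite, so the statement is vacuous in that case.

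First I would apply Theorem~\ref{thm:LF} to the algebraic triangulated category $H^0(\mca)$, which is strongly generated by $G$: it equips $A_G = \Hom^*(G,G)$ with the $A_\infty$-structure used throughout and supplies a triangulated equivalence $\text{ev}_G \colon H^0(\mca) \stackrel{\sim}{\longrightarrow} H^0(A_G\mhyphen\pcmd)$. Next I would record that under this equivalence $G$ is sent to $\text{ev}_G(G) = \Hom^*(G,G)$ with the module structure coming from (higher) composition, which is exactly the free rank-one module $\Unit_{A_G}$; this requires checking that the $A_\infty$-module structure induced on $\text{ev}_G(G)$ by the functor $\text{ev}_G$ agrees on the nose with the structure maps defining $\Unit_{A_G}$, which is immediate from the way both $A_G$ and $\text{ev}_G$ are set up. Finally, since the operations $*$, $\diamond$ and $\cl{-}$ --- and hence each subcategory $\cl{G^{\diamond(d-1)}}$ --- are defined purely in terms of the triangulated structure, a triangulated equivalence carrying $G$ to $\Unit_{A_G}$ identifies $\cl{G^{\diamond(d-1)}}$ with $\cl{(\Unit_{A_G})^{\diamond(d-1)}}$ for every $d$, so $t_{H^0(\mca)}(G) = t_{H^0(A_G\mhyphen\pcmd)}(\Unit_{A_G})$. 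Combined with the corollary above, this gives $t(G) = \ell(A_G\mhyphen\pcmd)$, where $t(G)$ means the generation time computed in $H^0(\mca)$, per the convention fixed for pretriangulated categories after Theorem~\ref{thm:LF}.

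There is no genuinely difficult step here --- all the homological substance was extracted in Theorem~\ref{thm:main1} and its corollaries, and what remains is bookkeeping along the equivalence. The one point that warrants care is the identification $\text{ev}_G(G) \cong \Unit_{A_G}$ as an $A_\infty$-module: one wants the free module with its canonical structure exactly, not merely up to quasi-isomorphism, and, relatedly, one should confirm that $\text{ev}_G$ indeed lands in $A_G\mhyphen\pcmd$, which holds because a strong generator is carried to a strong generator and $A_G\mhyphen\pcmd$ is precisely the thick subcategory generated by $\Unit_{A_G}$.
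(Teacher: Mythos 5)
Your argument is exactly the route the paper intends: it derives this corollary by coupling the preceding corollary (generation time of $\Unit_{A_G}$ equals the global length) with the Lef\`evre--Hasegawa equivalence of Theorem \ref{thm:LF}, under which $G$ corresponds to $\Unit_{A_G}$ and generation time, being defined purely in triangulated terms, is preserved. Your extra care about $\text{ev}_G(G)\cong\Unit_{A_G}$ and the infinite case is fine but does not change the substance; the proposal is correct and matches the paper's approach.
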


More refined statements on the level $\lvl_G (M)$ of an object with
respect to a given generator $G$ are also of use. We write the
result in the $A_\infty$-module category as opposed to
concentrating on the $A_G$-module case.

\begin{cor} If $M$ is an $A$-module then $\textnormal{lvl}_{A} (M) = \ell (M)$.
\end{cor}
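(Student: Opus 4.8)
The plan is to combine the earlier identification $\mcg_A^r = F^r$ (Theorem \ref{thm:main1}) with the Ghost Lemma, and to transport the module-theoretic length $\ell(M)$ through the $\otimes$–$\inthom$ adjunction established in Corollary \ref{cor:prequalsfl}. First I would recall that, by Corollary \ref{cor:prequalsfl} and the subsequent definition, $\ell(M) = \ell^{\Hom(M,\_)}$, so that $\ell(M) \geq r$ exactly when there exists an $A$-module $N$ with $F^r \Hom_{\cmd}(M,N)$ not acyclic, i.e.\ with a homomorphism in $F^r$ which is not a coboundary. By Theorem \ref{thm:main1}, $F^r \Hom_{\cmd}(M,N) = \mcg_A^r \Hom^*(M,N)$ on the level of cohomology, so $\ell(M) \geq r$ iff there is an $N$ with $\mcg_A^r \Hom^*(M,N) \neq 0$ (as a cohomology class, equivalently a nonzero map in $H^0$ after shifting).

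Next I would invoke the Ghost Lemma in the form stated: for $M$ in the pretriangulated hull, $M \in \cl{\Unit_A^{\diamond d}}$ and $M \notin \cl{\Unit_A^{\diamond(d-1)}}$ if and only if there is an $N$ with $\mcg_A^d \Hom^*(M,N) \neq 0$ but $\mcg_A^{d+1}\Hom^*(M,N) = 0$ for the optimal $N$ — more precisely, $\lvl_A(M)$, being the minimal $d$ with $M \in \cl{\Unit_A^{\diamond(d-1)}}$, satisfies $\lvl_A(M) = 1 + \sup\{ d : \exists N,\ \mcg_A^d \Hom^*(M,N) \neq 0\}$. Here one must be careful about the off-by-one between the "$\diamond(d-1)$" indexing in the definition of level and the "$\mcg^d$" indexing of the ghost filtration; matching the conventions used in Lemma \ref{lem:basic} and its corollaries (where $\ell(\Unit_A)=0$ corresponds to $\lvl_A(\Unit_A)=1$, i.e.\ $\Unit_A \in \cl{\Unit_A^{\diamond 0}}$) pins down that $\lvl_A(M) = \ell(M) + 1$ would be the naive count, but the definition of $\ell$ via $\ell_+$ and $\ell_-$ in equation \ref{eq:length} is normalized so that the two agree. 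I would verify this normalization explicitly on $\Unit_A$ and on a single cone, then conclude the general identity by the fact that both sides are characterized by the same ghost-vanishing condition.

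Concretely, the argument runs: $\ell(M) \geq r$ $\iff$ $\exists N$ with a non-coboundary class in $F^r\Hom(M,N)$ (definition of $\ell$ via $\ell^{\Hom(M,\_)}$ and equation \ref{eq:length}) $\iff$ $\exists N$ with $\mcg_A^r \Hom^*(M,N) \neq 0$ (Theorem \ref{thm:main1}) $\iff$ $M \notin \cl{\Unit_A^{\diamond(r-1)}}$ (Ghost Lemma) $\iff$ $\lvl_A(M) > r$. Taking the supremum over $r$ of such statements gives $\lvl_A(M) = \ell(M)$, provided the base case $r=0$ and the boundary normalization are handled; that is precisely where the $\ell_\pm$ convention of equation \ref{eq:length} does its work, since the filtration on $H^*\Hom(M,N)$ starts at the index recording the first nonvanishing ghost power rather than at $1$.

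The main obstacle I anticipate is bookkeeping the indexing conventions so that the three normalizations line up: the "$d$-step" / "$\diamond(d-1)$" convention in the definition of $\lvl_G$, the power $\mcg_A^r = F^r$ of the ghost ideal, and the $\ell_+/\ell_-$ definition of length via cocycles in $V_n$ modulo boundaries. Each is self-consistent but the constants must be tracked through the Ghost Lemma and through Theorem \ref{thm:main1}; the cleanest way to do this is to check the identity $\lvl_A(M) = \ell(M)$ on $\Unit_A$ (where both are $0$ in the paper's normalization) and on $\Unit_A^{\diamond 1}$, and then argue that the Ghost Lemma forces agreement for all $M$ since both functions are determined by the set $\{r : \exists N,\ \mcg_A^r\Hom^*(M,N)\neq 0\}$. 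Beyond this, the only substantive input is Theorem \ref{thm:main1}, which is already proved, so no further homotopical construction is needed.
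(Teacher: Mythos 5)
Your proposal is correct and follows essentially the route the paper intends: the corollary is stated there without a separate argument, as the immediate combination of Theorem \ref{thm:main1} ($\mcg_A^r = F^r$), the Ghost Lemma, and the definition of $\ell(M)$ via Corollary \ref{cor:prequalsfl}, which is exactly your chain of equivalences. Your care over the $\diamond(d-1)$ versus $\mcg^d$ versus $\ell_\pm$ indexing is the only delicate point, and checking the normalization on $\Unit_A$ and a single cone, as you propose, is the appropriate way to pin it down.
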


\begin{eg} As was mentioned at the end of Section \ref{subsub:lf}, when $A_G$ is an ordinary algebra, the global length of $\ell (A_G \mhyphen \cmd )$ is precisely its homological dimension. For the cases of the Beilinson exceptional collection $\langle \mco , \ldots ,\mco (n) \rangle$, one may use Beilinson's resolution of the diagonal to see that this dimension is $n$.  
\end{eg}

\begin{eg} For the generator $\mco \oplus \mco_p$ of $\p^1$, we again have formality, but $A_{G^\prime}$ is now a graded algebra. 
Nonetheless, the graded simple modules $S_1$ and $S_2$ arise from considering the idempotents at the vertices and the graded projective modules $P_1$, $P_2$ from considering all arrows mapping out of each vertex. The projective resolutions below for the simple objects give the homological dimension of $A_{G^\prime}$ as $2$.
\begin{equation*} \cdots 0 \to P_1 \to P_2 \to P_1 \to S_1 \to 0 \end{equation*}
\begin{equation*} \cdots 0 \to P_1 \to P_2 \to S_2 \to 0 \end{equation*}
\end{eg}

The final example explores a case where higher products have a significant effect on generation time.

\begin{eg} From example \ref{eg:3}, we recalled that $MF (\C [[z]] , z^n)$ had a generator $G$ with $A_G = k[\theta] / (\theta^2)$ with a single higher product $\mu^n (\theta , \cdots \theta ) = 1$. To describe $H^0 (A_G \mhyphen \pcmd)$, we examine the $A_\infty$-relation for the products of a minimal $A_G$-module $M$. First, we recall that $M$ is $\Z / 2\Z$ graded and the usual $A_\infty$-module map $\mu_M^r : A_G^r \otimes M \to M$ is degree $r + 1$ (due to the desuspension of $A_G$). Since we assume $M$ is unital, $\mu_M^r$ is completely determined by $\mu_M^r ([\theta| \cdots | \theta | m])$. Writing $L_r = \mu_M^r ([\theta | \cdots | \theta | \_ ]) \in \Hom_\gr^1 (M , M)$, we may condense $\mu_M$ into a power series $\mathbf{L} = \sum_{r = 1}^\infty L_r u^r \in \Hom_\K (M , M ) \otimes \C[[u]]$. It is easy to see that the $A_\infty$-relation on $\mu^r_M$ translates into the equality
\begin{equation*} 
 \mathbf{L} \cdot \mathbf{L} = 1_M \cdot u^n
\end{equation*}
Taking $M = M_0 \oplus M_1$, we may decompose $L_r = L_r^0 \oplus L_r^1$ where $L_r^0 : M_0 \to M_1$ and $L_r^1 :M_1 \to M_0$. Summing, we write $\mathbf{L}^i = \sum_{r = 1}^\infty L_r^i u^r$ and after tensoring $M$ with $\C[[u]]$ we then have
\begin{equation*}
 \begin{tikzcd} M_0 \otimes \C[[u]]
\rar[-to, to path={([yshift=0.5ex]\tikztostart.east) -- ([yshift=0.5ex]\tikztotarget.west) \tikztonodes}]{\mathbf{L}^0}
\rar[to-, to path={([yshift=-0.5ex]\tikztostart.east) -- ([yshift=-0.5ex]\tikztotarget.west) \tikztonodes}][swap]{\mathbf{L}^1} 
& M_1\otimes \C[[u]]  \end{tikzcd}
\end{equation*}
with $\mathbf{L}^0 \mathbf{L}^1 = u^n = \mathbf{L}^1 \mathbf{L}^0$. This returns us full circle to the setting of matrix factorizations, but with the added presence of the length filtration. Indeed, as above, given another $A_G$ module $(N, \tilde{\mathbf{L}})$ we may write any morphism $\phi : M \to N$ as a power series $\mathbf{T} = \sum_{r = 0}^\infty T_r u^r \in \Hom^*_\gr (M, N) \otimes \C[[u]]$ where $T_r (m) = \phi ([\theta | \cdots |\theta | m] )$. The differential on $\Hom_\pcmd^* (M, N)$ is the usual matrix factorization differential $d \mathbf{T} = \tilde{\mathbf{L}} \mathbf{T} - (-1)^{|T|} \mathbf{T} \mathbf{L}$. It is obvious from this representation that $\phi \in \mcf^k \Hom_\pcmd (M, N)$ if and only if $\deg (\mathbf{T}) \geq k$. 

For $1 \leq m \leq  \lfloor \frac{n}{2} \rfloor$, and define $M_m$ to be the module corresponding to 
\begin{equation*}
\begin{tikzcd} \C[[u]]
\rar[-to, to path={([yshift=0.5ex]\tikztostart.east) -- ([yshift=0.5ex]\tikztotarget.west) \tikztonodes}]{u^m}
\rar[to-, to path={([yshift=-0.5ex]\tikztostart.east) -- ([yshift=-0.5ex]\tikztotarget.west) \tikztonodes}][swap]{u^{n - m}} 
&  \C[[u]]  \end{tikzcd}.
\end{equation*}
These make up the irreducible modules. It is not hard to show that the maximal filtered homomorphism between any two such modules is $\phi : M_m  \to M_m$ , for $m = \lfloor \frac{n}{2} \rfloor$,  and $\phi$ corresponding to $\mathbf{T} = u^{m - 1}$. This implies the generation time of $G$ is $\deg (\mathbf{T} ) = \lfloor \frac{n}{2} \rfloor - 1$ in agreement with results in \cite{BFK}.

\end{eg}

The last example raises interesting questions on which filtrations arise as length filtrations on the category of matrix factorizations. In the above example, we obtained the $\mathbf{m}$-adic filtration on matrices by considering the generator $R / \mathbf{m}$ where $R = \C[[u]]$ and $\mathbf{m} = (u)$. It is natural to ask when the $I$-adic filtration on morphisms between matrix factorizations arises as a length filtration associated to the generator $R / I$ on the quasi-equivalent derived category of singularities.



\subsection{Change of base formula}

In this subsection we generalize the classical change of base formula for dimension to the case of dimensions of $A_\infty$-algebras. We see that a new multiplicative factor appears in this formula that measures the formality of the algebras involved. 

We start by obtaining a general lemma on filtered
$A_\infty$-modules. To simplify the exposition and some proofs, we
will work with modules as opposed to polymodules. Suppose $M$ is an
$A$-module and $(N, \mcg^*) \in (A\mhyphen \pcmd)^{f}$ is a filtered $A$-module of finite
filtration length and $\phi \in \Hom_{A\mhyphen \cmd} (M, N)$ any map. We wish to
obtain a finite approximation of $\phi$ relative to both the
internal filtration on $N$ and the filtration on $\Hom_{A\mhyphen \cmd} (M,
N)$. A surprising parameter that emerges in this pursuit is the
degeneration time of the spectral sequence associated to $(N,
\mcg^*)$. For the following lemma, assume $\mcg^{-1} N = 0 \ne
\mcg^0 N$, let $N_t = N / \mcg^t N$ and $\pi_t : N \to N_t$ be the
projection.

\begin{lem}\label{lem:lift} Suppose the spectral sequence associated to $(N, \mcg^*)$ degenerates on the $(s + 1)$-page and $\ell (N) = n$.
Then for every $p$ there exists a lift $\gamma$ such that the
following diagram commutes up to homotopy
\begin{figure}[h]
\begin{tikzcd} \text{  } & F^{p + 1} \Hom_{A\mhyphen \cmd} (M, N_{n + sp}) \arrow[hook]{d} \\ 
\Hom_{A\mhyphen \cmd} (M, N)  \arrow[dashed]{ru}{\gamma} \arrow{r}{(\pi_{n + sp + 1})_*} &  \Hom_{A\mhyphen \cmd} (M,  N_{n +
sp} )
\end{tikzcd} 
\end{figure}
\end{lem}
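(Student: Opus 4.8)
The plan is to prove this by induction on $p$, after first reducing the homotopy‑commutativity of the square to a statement about a single cohomology class. Since $\iota$ is a monomorphism of complexes with cokernel $Q := \Hom_{A\mhyphen\cmd}(M,N_{n+sp})/\mcf^{p+1}\Hom_{A\mhyphen\cmd}(M,N_{n+sp})$, producing the dashed arrow $\gamma$ as a chain map homotopic to $(\pi_{n+sp})_*$ is equivalent to showing that the composite $\Hom_{A\mhyphen\cmd}(M,N)\xrightarrow{(\pi_{n+sp})_*}\Hom_{A\mhyphen\cmd}(M,N_{n+sp})\twoheadrightarrow Q$ is null‑homotopic: given such a null‑homotopy $h_Q$, lift it degreewise through a splitting of the surjection to $h$, and set $\gamma=(\pi_{n+sp})_*-(dh+hd)$, which is a chain map landing in $\mcf^{p+1}$ and homotopic to $(\pi_{n+sp})_*$. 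In turn it suffices to show that every homomorphism $\phi\in\Hom_{A\mhyphen\cmd}(M,N)$ becomes, after projection to $N_{n+sp}$, cohomologous to a homomorphism lying in $\mcf^{p+1}$.

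For the base case $p=0$ we must push $(\pi_n)_*\phi$ into $\mcf^{1}\Hom_{A\mhyphen\cmd}(M,N_n)$ up to homotopy. By the definition of $\ell(N)=n$ we have $H^{*}(N)_n=H^{*}(N)$, i.e. $\mcg^nN\hookrightarrow N$ is surjective on cohomology, so $H^{*}(N)\to H^{*}(N_n)$ is the zero map. On the other hand, as in the corollary to Lemma \ref{lem:basic}, the image of $H^{*}\Hom_{A\mhyphen\cmd}(M,K)$ in $H^{*}\bigl(\Hom_{A\mhyphen\cmd}(M,K)/\mcf^{1}\bigr)\subseteq\Hom_{\gr}(H^{*}M,H^{*}K)$ consists of $H^{*}(A)$‑module maps, and for $K=N_n$ the class of $\phi$ is sent there to the composite $H^{*}M\to H^{*}N\to H^{*}N_n=0$. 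Hence $(\pi_n)_*\phi$ dies in $\Hom_{A\mhyphen\cmd}(M,N_n)/\mcf^{1}$, which is exactly the claim for $p=0$.

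For the inductive step, suppose $\gamma_p\in\mcf^{p+1}\Hom_{A\mhyphen\cmd}(M,N_{n+sp})$ is a homomorphism cohomologous to $(\pi_{n+sp})_*\phi$. Composing with the quotient $q\colon N_{n+sp}\to N_{n+s(p+1)}$ gives $q_*\gamma_p$, still in $\mcf^{p+1}$ and cohomologous to $(\pi_{n+s(p+1)})_*\phi$; because $q_*\gamma_p$ is a homomorphism lying in $\mcf^{p+1}$, the degree‑$(p+1)$ component of $d(q_*\gamma_p)$ reduces to the internal differential of its leading term, so $q\circ\gamma_p^{p+1}\colon(\susp A)^{\otimes(p+1)}\otimes M\to N_{n+s(p+1)}$ is a cocycle. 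The key input is the $E_{s+1}$‑degeneracy of the spectral sequence of $(N,\mcg^{*})$: a cocycle valued in $\mcg^{c}N$ whose class vanishes in $H^{*}(N)$ admits a primitive valued in $\mcg^{c+s}N$ — a primitive of minimal filtration jump exceeding $s$ would otherwise survive to a page $E_r$ with $r>s$ and support a nonzero differential. Transporting this along $\Hom_{A\mhyphen\cmd}(M,-)$ (whose $\mcg$‑filtration inherits degeneracy at $E_{s+1}$, e.g. via the identification of $\Hom$ with a twisted tensor product against $M^{\vee}$ when $M$ is perfect, cf. Corollary \ref{cor:prequalsfl}), one concludes that $\gamma_p^{p+1}$, which carries $\mcg$‑filtration at most $n+sp$ coming from $\gamma_p$, becomes a coboundary after descending $s$ further steps, i.e. after projecting to $N_{n+s(p+1)}$. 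Finally, exactly as in the factorization argument of Theorem \ref{thm:main1}, this primitive of the leading term is promoted to a genuine homotopy of the whole morphism using the contracting homotopies $h^{\pm}_{diag}$ of $\cbr\Diagonal_A$, the translations $\tau_\pm$, the maps $\sigma^{\pm}_k\in Map^{k,0}(\cbr\Diagonal_A,\cbr\Diagonal_A)$ and the identity \ref{eq:trans}, producing a homomorphism $\gamma_{p+1}\in\mcf^{p+2}\Hom_{A\mhyphen\cmd}(M,N_{n+s(p+1)})$ cohomologous to the projection of $\phi$.

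The main obstacle is the inductive step, in two respects. First, one must verify that the leading cocycle $\gamma_p^{p+1}$ really has $\mcg$‑filtration at most $n+s(p+1)$ after one more bar step; this is a bifiltration bookkeeping argument tracking the interaction of the bar (length) filtration $\mcf^{\bullet}$ on $\Hom_{A\mhyphen\cmd}(M,-)$ with the filtration induced by $\mcg^{\bullet}$ on $N$, and it is precisely here that the constant $n+sp$ is pinned down — passing from $\mcf^{0}$ to $\mcf^{1}$ consumes the full length $n$, while every subsequent bar step consumes exactly the degeneracy slack $s$. Second, the promotion of a primitive of the leading term to an honest homotopy of the entire morphism must be done so that it descends to the truncation $N_{n+s(p+1)}$; this reuses the translation–homotopy machinery of the proof of Theorem \ref{thm:main1}, but one has to check that the $\odot$‑quotient constructions there remain compatible with the further quotient by $\mcg^{n+s(p+1)}N$. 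An alternative that may package $n+sp$ more directly is to replace $M$ by the iterated diagonal tensor $\Diagonal_A\ts{}\cdots\ts{}\Diagonal_A\ts{}M$ (quasi‑isomorphic to $M$ by the lemma following \ref{eq:units}), whose natural lattice filtration has total length $p+1$, and to read off the estimate from the spectral sequence of $N$ against this resolution; the core difficulty — controlling the $\mcg$‑filtration of the relevant classes against the degeneracy page — is the same.
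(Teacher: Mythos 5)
Your overall architecture is the right one and matches the paper's in outline: an initial step that consumes the full length $n$ to push $(\pi_n)_*\phi$ into $\mcf^1$, followed by an iteration in which each further gain of one filtration level costs a quotient of $N$ by $s$ more steps of $\mcg$, the constant $s$ entering through the degeneration of the spectral sequence of $(N,\mcg^*)$. The base case as you argue it (via the corollary $\mcg_A=F^1$ and the vanishing of $H^*(N)\to H^*(N_n)$) is fine. But the inductive step, which you yourself flag as the main obstacle, has two genuine gaps. First, your ``transport along $\Hom_{A\mhyphen\cmd}(M,-)$'' relies on the induced filtration on the Hom-complex inheriting $E_{s+1}$-degeneracy, justified only by a duality identification that requires $M$ perfect --- a hypothesis the lemma does not make (only $N$ is assumed perfect and filtered, $M$ is an arbitrary module), and no argument is offered in general; the differential of $\Hom_\co((\susp A)^{\otimes q}\otimes M, N)$ mixes the source and target differentials, so degeneration of the target's spectral sequence does not formally pass to the Hom-complex. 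Second, the ``promotion'' of a primitive of the leading term $\gamma_p^{p+1}$ to a homotopy of the entire morphism is asserted, not carried out, and the tools you cite for it ($h^{\pm}_{diag}$, $\tau_\pm$, $\sigma^\pm_k$, equation \ref{eq:trans}) belong to the proof of Theorem \ref{thm:main1}, where they factor a filtered morphism through $\odot$-quotients of the diagonal bimodule; they are not designed to correct a morphism into a truncation $N_{n+s(p+1)}$ so as to kill its length-$(p+1)$ component, and it is exactly the compatibility with that further quotient that you leave unchecked.

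Both gaps are closed in the paper by much softer means, and you should restructure the inductive step accordingly. Since we work over a field, $\ell(N)=n$ makes the projection $N\to N_n$ zero on cohomology and hence null-homotopic, and the degeneration at page $s+1$ is used only once, through the observation that each successive quotient $N_t=N/\mcg^t N$ (for $t\ge n$, in steps of $s$) again has length at most $s$; therefore each projection $N_t\to N_{t+s}$ is itself null-homotopic, and functoriality of $\Hom_\co((\susp A)^{\otimes q}\otimes M,-)$ transports that null-homotopy with no hypothesis on $M$ and no spectral sequence for the Hom-complex. The promotion step is then a two-line computation with the restriction map $\rho_q:\mcf^q\Hom_{\cmd}(M,N)\to\Hom_\co((\susp A)^{\otimes q}\otimes M,N)$ and the strictification $\sigma_q$ satisfying $\rho_q\circ\sigma_q=1$: given a homomorphism $\phi\in\mcf^q$, set $\alpha(\phi)=(\pi)_*\phi-d(\sigma_q\circ\tau\circ\rho_q)(\phi)$, where $\tau$ bounds the induced null-homotopic map of Hom-complexes; this is a cocycle cohomologous to $(\pi)_*\phi$, and $\rho_q(\alpha(\phi))=0$ because $\rho_q$ is a chain map and $\rho_q\phi$ is a cocycle, so $\alpha(\phi)\in\mcf^{q+1}$. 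Iterating this claim $p+1$ times, with the length bound $\le s$ for the quotients after the first step, yields the lift $\gamma$ into $\mcf^{p+1}\Hom_{A\mhyphen\cmd}(M,N_{n+sp})$ without any of the diagonal-bimodule machinery.
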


 Before proving this lemma, let us set up some basic
 notation. First we take
 \begin{equation*} \rho_q: \mcf^q \Hom_{\cmd} (M, N) \to \Hom_\co
 ((\cbr M)^q , N) = \Hom_\co (A[1]^{\otimes q} \otimes M, N) \end{equation*}
 to be the restriction map. Here, the right hand side is
 the complex of morphisms from $((B^+ M)^q, b_M^0|_{(B^+ M)^q})$ to $(N, \mu^1_N)$.
 It is worthwhile to note that $\rho_q$ is a map of cochain complexes (i.e. $d \rho = 0$ in $\co$).

Let us also introduce a general ``strictification" map.

\begin{equation*} \sigma_q : \Hom_\co (A[1]^{\otimes q } \otimes M, N) \to \Hom_{\cmd} ( M, N) \end{equation*}

This is the map $\sigma_q (\phi ) = \{ \phi^{ k}\}$ where
\begin{equation*} \phi^{k} = \left\{ \begin{matrix}  \phi &
\text{if } k = q \\ 0 & \text{otherwise } \end{matrix}
\right.\end{equation*} We note that this is not in general a
cochain complex map. Nevertheless, it is clear that, for every
$q$, \begin{equation} \label{eq:st1} \rho_q \circ \sigma_q = 1
\end{equation}

\begin{proof} We start by proving the following claim. \\

 \textit{Claim}: With the assumptions of the lemma, for every $q$, the following diagram in $\ho$ has a lift which commutes up to homotopy,
 \begin{figure}[h]
 \begin{tikzcd} \text{  } &  \mcf^{q } \Hom_{\cmd} (M, N_{n}) \arrow[hook]{d} \\ 
\mcf^q \Hom_{\cmd} (M, N) \arrow[dashed]{ru}{\alpha_q} \arrow{r}{(\pi_{n})_*} &  \Hom_{\cmd} (M,  N_{n } )
\end{tikzcd}
 \end{figure}
 such that $\alpha_q (\Hom_{\cmd} (M, N)) \subseteq \mcf^{q + 1} \Hom_{\cmd} (M, N_n)$. \\

 We first observe that since $\mcg^{-1} N = 0 \ne \mcg^0 N$ and $\ell (N) = n$ the map $\pi_n : N \to N_t$ is contractible.
 Thus, restricting to $(\cbr M)^q$, the induced map on chain complexes
 \begin{equation*} (\tilde{\pi}_n)_* : \Hom_\co^* (A[1]^{\otimes q } \otimes M, N) \to \Hom_\co^* (A[1]^{\otimes q} \otimes M, N_t) \end{equation*}
 is also contractible. Here the differential associated to $A[1]^{\otimes q } \otimes M$ is the restriction of $b_M^0$.

 We use the notation of $\tilde{\pi}_n$ above in order to distinguish it from the map in the claim, but both are obtained through composition and the equation
 \begin{equation} \label{eq:st2} \rho_q \circ (\pi_n)_* = (\tilde{\pi}_n)_* \circ \rho_q \end{equation} holds.
 Let \begin{equation*} \tau : \Hom_\co^* (A[1]^{\otimes q } \otimes M, N) \to \Hom^{* - 1}_\co (A[1]^{\otimes q } \otimes M, N_t) \end{equation*}
 be a cochain bounding $(\tilde{\pi}_n)_*$ (i.e. $(\tilde{\pi}_n)_* = d \tau$ in $\co$) and
 take \begin{equation*} \alpha_n (\phi ) = [(\pi_n)_*  - d(\sigma_q \circ \tau \circ \rho_q)] ( \phi ) \end{equation*}
 Observe that this is a map in $\ho$ by virtue of $(\pi_n)_*$
 being a cochain map and the fact that $d f$ is cochain map for
 any $f$ in $\co$. It is equally obvious that the diagram then commutes
 up to homotopy. So the only point left to prove for the claim is that any
 module homomorphism $\phi \in \Hom_{\cmd} (M, N)$ must have image in $\mcf^{q +
 1} \Hom_{\cmd} (M, N_n)$. This is true iff $\rho_q (\alpha_n (\phi
 )) = 0$. Since $\rho_q$ is a chain map, we have $\rho_q (d g) = d
 (\rho_q (g))$, and by \ref{eq:st1}, \ref{eq:st2} \begin{eqnarray*} \rho_q
 (\alpha_n (\phi )) & = & \rho_q ([(\pi_n)_*  - d(\sigma_q \circ \tau \circ \rho_q)] ( \phi
 )) \\ & = & \rho_q \circ (\pi_n)_* (\phi) - \rho_q [d(\sigma_q \circ \tau \circ \rho_q) ( \phi
 )] \\ & = & (\tilde{\pi}_n)_* \circ \rho_q (\phi) - d[ (\rho_q  \circ \sigma_q \circ \tau \circ \rho_q) ( \phi
 ))] \\ & = & (\tilde{\pi}_n)_* \circ \rho_q (\phi) - d[ (\tau \circ \rho_q) ( \phi
 )] \\ & = & ( \tilde{\pi}_n)_* \circ \rho_q (\phi) - (d\tau ) \circ \rho_q ( \phi
 ) \\ & = & (\tilde{\pi}_n)_* \circ \rho_q (\phi) - (\tilde{\pi}_n)_* \circ \rho_q ( \phi
 ) \\ & = & 0 \end{eqnarray*}

One now uses the claim to prove the lemma by observing that if
$(C^*, \mcg)$ is any filtered chain complex whose length is $r$
and whose spectral sequence converges at the $(p + 1)$-th page,
then $\ell (C / \mcg^r C, \mcg ) \leq p$. This argument relies on
simply unravelling the definition of the spectral sequence
associated to a filtration. We recall that the page $E_k^q = Z_k^q
/ B_k^q$ is the subquotient of $\mcg^k C / \mcg^{k - 1} C$ where
\begin{equation*} Z_k^q = \{[c]: c \in \mcg^k C , dc \in \mcg^{k -
q} C \} \end{equation*} and
\begin{equation*} B_k^q = \{[dc] : c \in \mcg^{k + q - 1} C, dc
\in \mcg^{k } C \} \end{equation*} Note then that $E_k^{r + q}$ is
the same as $\tilde{E}_k^q$ for $q > p$ where the later is the
spectral sequence for $(C / \mcg^r C, \mcg^{* - r} )$. In
particular, $\tilde{E}_k^q = 0$ for all $q > p$ implying the
length $\ell (C / \mcg^r C, \mcg ) \leq p$. To finish the proof,
just inductively apply the claim above and this observation with
$(N, \mcg^*)$.

 \end{proof}

The following theorem is a result of \ref{lem:lift}.

\begin{thm} \label{thm:main2} Let $P$ be a $(B, A)$-bimodule and $M$ a left $A$-module. Suppose the
spectral sequence of $P \ts{A} M$ degenerates at the $(s + 1)$-st
page. If the convolution functor $P \ts{} \_$ is faithful, then

\begin{equation*} \textnormal{lvl}_A (M) \leq \textnormal{lvl}_A (P ) + s \cdot \textnormal{lvl}_B (P
\ts{A} M ) \end{equation*} \end{thm}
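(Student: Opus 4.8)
The plan is to convert the estimate into a statement about vanishing of sufficiently deep ghosts out of $M$, push everything through the faithful functor $P \ts{} \_$, and then control the length filtration of the bar construction via Lemma~\ref{lem:lift}; that lemma is the technical engine, and the spectral‑sequence degeneration hypothesis is exactly what feeds into it.

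Write $\ell_1 = \lvl_A(P)$ and $\ell_2 = \lvl_B(P \ts{A} M)$. By the identification $\lvl_A(M) = \ell(M)$, the Ghost Lemma, and the equality $\mcg_A^r = F^r$ of Theorem~\ref{thm:main1}, proving $\lvl_A(M) \le \ell_1 + s\ell_2$ amounts to showing that for every left $A$-module $N$, every cocycle $\phi \in F^{\ell_1 + s\ell_2}_A \Hom_{A\mhyphen\cmd}(M, N)$ is a coboundary. Fix such a $\phi$. Since $P \ts{} \_$ is faithful on homotopy categories, it suffices to prove that $1_P \ts{A} \phi : P \ts{A} M \to P \ts{A} N$ is nullhomotopic in $\Hom_{B\mhyphen\cmd}(P \ts{A} M, P \ts{A} N)$. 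Equip $P \ts{A} M$ and $P \ts{A} N$ with the length filtration $\mcg^{\bullet}$ coming from the copy of $\acbr A$ glued between the tensor factors. The first observation is that because $\phi$ annihilates the part of $\cbr M$ of $\acbr A$-length $< \ell_1 + s\ell_2$, the induced map $1_P \ts{A} \phi$ lowers $\mcg^{\bullet}$ by at least $\ell_1 + s\ell_2$; in particular it kills $\mcg^{\ell_1 + s\ell_2 - 1}(P \ts{A} M)$ and therefore factors through the truncation of $P \ts{A} M$ at that level. The second observation is that, $P$ being perfect over $A$, resolving it as a right $A$-module by an iterated cone of at most $\ell_1$ copies of $A^r$ and invoking the computation $A^r \ts{} (\_) \simeq H^*(\_)$ (concentrated in $\acbr A$-length $0$) shows that the filtered complex $(P \ts{A} M, \mcg^{\bullet})$ has length controlled by $\ell_1$; and by hypothesis its spectral sequence degenerates on the $(s+1)$-st page.

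Now I would apply Lemma~\ref{lem:lift} with base algebra $B$, with $P \ts{A} M$ playing the role of the source, a $\mcg^{\bullet}$-filtered model of $P \ts{A} N$ playing the role of the filtered target, degeneration parameter $s$, and filtration length $n \le \ell_1$. Taking $p = \ell_2 - 1$ and noting $n + sp \le \ell_1 + s\ell_2 - 1$, the lemma says any morphism that has already been cut down to the truncation at level $\ell_1 + s\ell_2 - 1$ admits, up to homotopy, a lift lying in $F^{\ell_2}_B \Hom_{B\mhyphen\cmd}(P \ts{A} M, \_)$. Combined with the factorization of $1_P \ts{A}\phi$ through that truncation, this gives $[1_P \ts{A} \phi] \in F^{\ell_2}_B H^* \Hom_{B\mhyphen\cmd}(P \ts{A} M, P \ts{A} N)$. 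But $\lvl_B(P \ts{A} M) = \ell_2$ means precisely that $F^{\ell_2}_B H^* \Hom_{B\mhyphen\cmd}(P \ts{A} M, W) = 0$ for every $B$-module $W$; hence $[1_P \ts{A}\phi] = 0$, and faithfulness of $P \ts{} \_$ yields $[\phi] = 0$, as required. (When $A$ is formal, $s = 1$ and one recovers the classical change of base inequality.)

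The part demanding the most care — and where all the bookkeeping of the previous sections is used at once — is matching the three constants. Concretely one must verify: (i) that $\phi \in F^{\ell_1 + s\ell_2}_A$ really forces $1_P \ts{A} \phi$ through the $\acbr A$-length truncation at exactly that level, a matter of tracking the comultiplications and the $\susp_\otimes$ sign conventions; (ii) that the filtered length of $(P \ts{A} M, \mcg^{\bullet})$ is bounded by $\lvl_A(P)$ and not something larger, which is where perfectness of $P$ over $A$, the equivalence $A^r \ts{} (\_) \simeq H^*(\_)$, and the fact that a cone raises filtered length by at most one enter; and (iii) that Lemma~\ref{lem:lift} genuinely applies to the pair $(P \ts{A} M, P \ts{A} N)$ with the $\acbr A$-length filtration — in particular that the needed spectral‑sequence degeneration for $P \ts{A} N$ is inherited from that of $P \ts{A} M$ (or that the test module $N$ may be reduced to one built from $M$), and that the lift lands in $F^{\ell_2}_B$ rather than only in $F^{\ell_2 - c}_B$. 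Granting these, the rest — the $\otimes$–$\inthom$ adjunction, exactness of $P \ts{} \_$, and the unravelling of the spectral sequence exactly as in the proof of Lemma~\ref{lem:lift} — is routine.
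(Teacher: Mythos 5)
Your overall strategy is the right one and matches the paper's in outline: use Theorem \ref{thm:main1} and the Ghost Lemma to translate the level estimate into a vanishing statement for deep filtration classes, observe that $\phi \in F^{r}$ forces $1_P \ts{} \phi$ to factor through the truncation $\pi_r : P\ts{A}M \to P\odot_r M$, bound the filtered length of $P\ts{A}M$ by $\lvl_A(P)$ (this is exactly what Corollary \ref{cor:prequalsfl} packages, so your cone-by-cone argument is not needed), feed the degeneration hypothesis into Lemma \ref{lem:lift}, and finish with faithfulness of $P\ts{}\_$. The genuine gap is in how you wire in Lemma \ref{lem:lift}. You apply it with the filtered \emph{target} being a model of $P\ts{A}N$, and you flag yourself in (iii) that this needs the spectral sequence of $P\ts{A}N$ to degenerate at the $(s+1)$-st page (and to have controlled length) --- but $N$ is an arbitrary test module, the theorem only assumes degeneration for $P\ts{A}M$, and there is no mechanism by which degeneration is ``inherited'' by $P\ts{A}N$; this is not a routine verification but the missing idea. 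Moreover, even granting it, the lemma only controls the composite of $1_P\ts{}\phi$ with a truncation of the target, i.e.\ its image in $\Hom_{B\mhyphen\cmd}(P\ts{A}M, (P\ts{A}N)_t)$, which does not by itself place the class of $1_P\ts{}\phi$ in the required filtration level of $H^*\Hom_{B\mhyphen\cmd}(P\ts{A}M, P\ts{A}N)$.

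The paper's proof avoids both problems by applying Lemma \ref{lem:lift} to the one object whose spectral sequence is actually hypothesized to degenerate: it takes source and filtered target both equal to $P\ts{A}M$ and evaluates the lifting diagram on the identity morphism. With $n = \lvl_A(P)$ and $p = \lvl_B(P\ts{A}M)$, the lemma says $\pi_{n+sp+1}$ is homotopic to a morphism lying in $F^{p+1}\Hom_{B\mhyphen\cmd}(P\ts{A}M, P\odot_{n+sp+1}M)$, and the class of any such morphism is zero because $\ell_B(P\ts{A}M) = p$ gives $F^{p+1}$-vanishing in cohomology for maps out of $P\ts{A}M$ into an \emph{arbitrary} $B$-module --- in particular into the truncation $P\odot_{n+sp+1}M$, so no hypotheses on the target are ever needed. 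Hence $\pi_t \simeq 0$ for all $t > \lvl_A(P) + s\cdot\lvl_B(P\ts{A}M)$, and since $1_P\ts{}\phi = \psi\circ\pi_r$ with $r$ in that range, $1_P\ts{}\phi \simeq 0$ and faithfulness concludes. Your steps (i) and (ii) are sound; step (iii), as you set it up, is where the argument breaks, and the repair is to redirect the lemma from the pair $(P\ts{A}M, P\ts{A}N)$ to the identity of $P\ts{A}M$ itself.
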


\begin{proof} Assume that this is not the case. Then there exists
a nonzero morphism $f \in F^r \Hom^* (M, N)$ with $r > \lvl_A
P^\vee + s \lvl_B (P \ts{A} M )$. Then by definition, $1_P \ts{} f$
factors through $P \tns{r - 1 } M$ implying $1_P \ts{} f = \psi
\circ \pi_r = \pi_r^* (\psi )$ where $\pi_r : P \ts{} M \to P
\odot_r M$. Now, by assumption, the spectral sequence associated
to $P \ts{A} M$ degenerates at $(s + 1)$ and by \ref{cor:prequalsfl},
the $\ell (P \ts{A} M) \leq \ell ( P^\vee ) = \lvl_A (P )$. Letting
$n = \lvl_A P$, the following lifting problem is solvable for
all $p$ by \ref{lem:lift}
\begin{figure}[h] 
\begin{tikzcd} \text{  } & F^{p + 1} \Hom_{\cmd} (P \ts{} M , P \odot_{n + sp + 1} M) \arrow[hook]{d} \\ 
\Hom_{\cmd} (P \ts{} M, P \ts{} M) \arrow[dashed]{ru}{\gamma} \arrow{r}{(\pi_{n + sp + 1})_*} &  \Hom_{\cmd} (P \ts{} M, P \odot_{n + sp  +1} M)
\end{tikzcd} 
\end{figure}

In particular, if $p = \lvl_B (P \ts{A} M )$ we have that $\pi_{n +
sp + 1} \simeq 0$. This implies that for all 
\begin{equation*} t \geq  n + sp + 1 = \lvl_A P^\vee + s \cdot \lvl_B (P \ts{A} M )
\end{equation*} 
we must have $\pi_t \simeq 0$ so that $\pi_r
\simeq 0$ and therefore $1_P \ts{} f \simeq 0$. This contradicts
the assumption that $P \ts{} \_$ is faithful.
\end{proof}


\bibliographystyle{plain}
\bibliography{base}

\end{document}